  \newtheorem{theorem}{Theorem}[section]
  \newtheorem{lemma}[theorem]{Lemma}
 \newtheorem{proposition}[theorem]{Proposition}
  \newtheorem{remark}[theorem]{Remark}
  \newenvironment{proof}{{\it Proof}. }{\hfill\END\\[0.5ex]}
\DeclareMathOperator{\vcurl}{\overrightarrow{\mathrm{curl}}}
\DeclareMathOperator{\curl}{curl}
\DeclareMathOperator{\divv}{div}
\DeclareMathOperator{\SL}{SL}
\DeclareMathOperator{\DL}{DL}
\newcommand{\D}{\mathrm{D}}
\DeclareMathOperator{\V}{V}
\DeclareMathOperator{\W}{W}
\DeclareMathOperator{\K}{K}
\DeclareMathOperator{\HH}{H}
\DeclareMathOperator{\I}{I}
\DeclareMathOperator{\Hes}{Hess}
\DeclareMathOperator{\sign}{sign}
\newcommand{\be}{\begin{equation}}
\newcommand{\ee}{\end{equation}}
\newcommand{\x}{{\bm{x}}}
\newcommand{\y}{{\bm{y}}}
\newcommand{\nnn}{\bm{n}}
\newcommand{\ttt}{\bm{t}}
\newcommand{\END}{\hfill$\Box$}
\title{Robust boundary integral equations for the solution of elastic scattering problems via Helmholtz decompositions}
\date{\today}
\author{V\'{i}ctor Dom\'{i}nguez\thanks{Dep. Estadística, Matemática e Inform\'atica, Universidad P\'{u}blica de Navarra. Campus de Tudela 31500 - Tudela, Spain, e-mail: victor.dominguez@unavarra.es.}\and Catalin Turc\thanks{  Department of
Mathematical Sciences, New Jersey  Institute of Technology,
Univ. Heights. 323 Dr. M. L. King Jr. Blvd, Newark, NJ 07102, USA, e-mail: catalin.c.turc@njit.edu.}}
\pgfplotsset{compat=1.18}
\numberwithin{equation}{section}
\begin{document}
\maketitle
\begin{abstract}
 Helmholtz decompositions of the elastic fields open up new avenues for the solution of linear elastic scattering problems via boundary integral equations (BIE) \added[id=catB]{[Dong et al., Mathematics of Computation 90 (2021)]}. The main appeal of this approach is that the ensuing systems of BIE feature only integral operators associated with the Helmholtz equation. However, these BIE involve non standard boundary integral operators that do not result after the application of either the Dirichlet or the Neumann trace to Helmholtz single and double layer potentials. Rather, the Helmholtz decomposition approach leads to BIE formulations of elastic scattering problems with Neumann boundary conditions that involve boundary traces of the Hessians of Helmholtz layer potential. As a consequence, the classical combined field approach applied in the framework of the Helmholtz decompositions leads to BIE formulations which, although robust, are not of the second kind. Following the regularizing methodology introduced in \added[id=catB]{[Boubendir et al., SIAP 75 (2015)]} we design and analyze novel robust Helmholtz decomposition BIE for the solution of elastic scattering that are of the second kind in the case of smooth scatterers in two dimensions. We present a variety of numerical results based on Nystr\"om discretizations that illustrate the good performance of the second kind regularized formulations in connections to iterative solvers.
  \newline \indent
  \textbf{Keywords}: Time-harmonic Navier scattering problems, Helmholtz decomposition, boundary integral equations, pseudodifferential calculus, Nystr\"om discretizations, preconditioners.\\

 \textbf{AMS subject classifications}:
 65N38, 35J05, 65T40, 65F08
\end{abstract}

\section{Introduction}

The extension of Boundary Integral Equation (BIE) based discretizations for the numerical solution of acoustic scattering problems (i.e. Helmholtz equations) to their elastic scattering problems (i.e. Navier equations) counterpart is thought to be more or less straightforward. \added[id=catR]{A main challenge in the discretization of BIE formulations of various linear, constant coefficient PDEs is the use of quadratures that resolve the singularities of the associated Green's functions.} Typically, BIE for elastic scattering problems are based on the Navier Green's function, which albeit more complicated than the Helmholtz Green's function (not in the least because it involves two wave-numbers), exhibits the same integrable/weak singularity as the latter. \added[id=catR]{Therefore, the Boundary Integral Operators associated with both PDEs feature weakly singular, singular, and hypersingular integral operators, and quadratures which resolve in a black box manner the singularities of Green's functions and their derivatives are preferred for the numerical discretization of these operators.}  Amongst these types of quadratures we mention the $\delta$-BEM methods~\cite{dominguez2008dirac,dominguez2012fully,dominguez2014nystrom,DoSaSa:2015} and the Density Interpolation Methods~\cite{faria2021general}. On the other hand, other discretizations strategy such as the Kussmaul-Martensen singularity splitting Nystr\"om methods can be extended from the Helmholtz to the Navier setting but require cumbersome modifications~\cite{chapko2000numerical,dominguez2021boundary}.

Recently a new BIE approach to elastic scattering problems shortcuts the need to use Navier Green's functions and relies  entirely on Helmholtz layer potentials~\cite{dong2021highly,lai2019framework}. This approach is based on Helmholtz decompositions of  elastic waves into compressional and shear waves, a manner which reduces the elastic scattering problem, at least in two dimensions, to the solution of two Helmholtz equations coupled by their boundary values on the scatterers. In three dimensions this approach leads to coupling the solution of a Helmholtz scattering problem to that of a Maxwell scattering problem~\cite{dong2022spectral}. These recent contributions~\cite{dong2021highly,lai2019framework} consider only single layer potential representations of Helmholtz fields and elastodynamic fields with Dirichlet boundary conditions on the boundary of the scatterers, and, therefore, the ensuing BIE are not robust for all frequencies. We extended the Helmholtz decomposition approach in two dimensions to Neumann boundary conditions as well as for combined field representations in~\cite{dominguez2022nystrom}. The extension of the Helmholtz decomposition approach to the Neumann case is nontrivial since it gives rise to non standard Boundary Integral Operators (BIOs) which arise from applications of boundary traces to Hessians of single and double layer Helmholtz potentials. More importantly, the BIE Helmholtz decomposition route is  a viable approach to the solution of elastic scattering problems in as much as robust formulations are used. The goal of this paper is to derive and analyze such robust formulations, including some which are of the second kind in the case of smooth scatterers.

The most widely used strategy to deliver robust BIE formulations for the solution of time harmonic scattering problems is the combined field (CFIE) strategy~\cite{BrackhageWerner,BurtonMiller}. The analysis of such formulations relies on the Fredholm theory and the well posedness of Robin boundary value problems in bounded/interior domains. The analysis of the Fredholm property of the BIOs that arise in the combined field strategy in the Helmholtz decomposition framework for elastic scattering problems is a bit more delicate, as it requires the use of lower order terms in the asymptotic expansion in the pseudodifferential sense of the constitutive BIOs. The reason for this more involved analysis is the fact that the principal symbols of these operators (unlike the Helmholtz case) are defective, which was already remarked in~\cite{dong2021highly}. However, the ensuing CFIE feature pseudodifferential operators of order one (Dirichlet) and respectively two (Neumann) and \added[id=catR]{as} such are not ideal for iterative solutions \added[id=catR]{since the spectra of these integral operators spread out to infinity}.

In order to design integral formulations of the second kind for the solution of elastic scattering problems via the Helmholtz decomposition approach we employ the general methodology in~\cite{boubendir2015regularized}.  Using  coercive approximations of Dirichlet to Neumann ({\rm DtN}) operators, we construct certain regularizing operators that are approximations of the operators that map the boundary conditions in the Helmholtz decomposition approach to the Helmholtz Cauchy data on the boundary of the scatterer. The regularizing operators we consider, whose construction relies on the pseudodifferential calculus, are based on either square root Fourier multipliers or Helmholtz BIOs, and are straightforward to implement in existing discretizations methodologies for Helmholtz BIOs. More importantly, we prove that in the case of smooth scatterers the regularized formulations are robust and of the second kind, and we provide numerical evidence about the superior performance of these formulations over the combined field formulations with respect to iterative solvers. Thus, we show in this paper that it is possible to construct BIE with good spectral properties for the solution of the elastic scattering problems in two dimensions via Helmholtz decompositions, making this approach a viable alternative to the recently introduced BIE based on the Navier Green's function~\cite{chaillat2015approximate,chaillat2020analytical,dominguez2021boundary,osti_10185828}.  The extension of this approach to three dimensional elastic scattering problem is currently ongoing.

 The paper is organized as follows: in Section~\ref{setting} we introduce the Navier equations in two dimensions and we present the Helmholtz decomposition approach; in Section~\ref{GradHessCalculations} we present the trace for the gradient and Hessian  of the boundary layer operators for Helmholtz equation. We also review certain properties of Helmholtz BIOs with asymptotic expansions of in the pseudodifferential sense.
In Section~\ref{BIE} we analyze the nonstandard BIOs that arise in the Helmholtz decomposition approach for Dirichlet (subsection \ref{dirP}) and Neumann (subsection \ref{neuP}) boundary conditions. Finally, we present in Section~\ref{NR} a variety of numerical results illustrating the iterative behavior of solvers based on Nystr\"om discretizations of the various BIE formulations of the Helmholtz decomposition approach  in the high frequency regime.

\section{Navier equations}\label{setting}

For any vector function ${\bf u}=(u_1,u_2)^\top:\mathbb{R}^2\to   \mathbb{R}^2$ (vectors in this paper will be always regarded as {\em column} vectors)  the strain tensor in  a linear isotropic and homogeneous elastic medium with Lam\'e constants $\lambda$ and $\mu$  is defined as
\[
 \bm{\epsilon}({\bf u}):=\frac{1}2 (\nabla {\bf u}+(\nabla {\bf u})^\top)
 =\begin{bmatrix}
  \partial_{x_1} u_1& \tfrac{1}2\left(\partial_{x_1} u_2+\partial_{x_2} u_1\right)     \\
   \tfrac{1}2\left(\partial_{x_1} u_2+\partial_{x_2} u_1\right) &
   \partial_{x_2} u_2
     \end{bmatrix}.
\]
The stress  tensor is then given by
\[
 \bm{\sigma}({\bf u}):=2\mu \bm{\epsilon}({\bf u})+\lambda (\divv{\bf u}) I_2
\]
where $I_2$ is the identity matrix of order 2 and the Lam\'e coefficients  \added[id=catB]{$\lambda$  and $\mu$} are assumed to satisfy $\lambda,\ \lambda +2\mu>0$.  The time-harmonic elastic wave (Navier) equation is
\begin{equation}\label{eq:NavD}
 \divv\bm{\sigma}({\bf u})+\omega^2{\bf u}=\mu\Delta {\bf u}+(\lambda+\mu)\nabla(\divv {\bf u})+\omega^2 {\bf u}=0
\end{equation}
where the frequency $\omega\in\mathbb{R}^+$ and the {\rm divergence} operator ${\rm div}$ is applied row-wise. \added[id=vD]{Notice that the gradient operator is taken as a column vector operator for compatibility.}

Considering a bounded domain $\Omega$ in $\mathbb{R}^2$ whose boundary $\Gamma$ is a closed smooth curve, we are interested in solving the impenetrable elastic scattering problem in the exterior of $\Omega$, denoted from now on as $\Omega^+$. That is, we look for solutions of the time-harmonic Navier equation
that satisfy the Kupradze radiation condition at infinity \cite{AmKaLe:2009,kupradze2012three}: if
\begin{equation}\label{eq:Hdecomp1}
\mathbf{u}_p:=-\frac{1}{k_p^2} \nabla \divv \mathbf{u}, \quad \mathbf{u}_s:=\mathbf{u}-\mathbf{u}_p= \vcurl {\rm{curl} \ {\bf u}}
\end{equation}
($\vcurl \varphi :=(\partial_{x_2} \varphi,-\partial_{x_1}\varphi)$, \added[id=catR]{$\curl{\bf u} := \partial_{x_1}u_2-\partial_{x_2} u_1$} are
the vector and scalar curl, or rotational, operator)
with
\begin{equation}\label{eq:ks:kp}
k_p^2:=\frac{\omega^2}{\lambda+2 \mu}, \quad k_s^2:=\frac{\omega^2}{\mu}
\end{equation}
the \added[id=catB]{associated pressure and stress wave-numbers}, then
\[
\frac{\partial \mathbf{u}_p}{\partial \widehat{\bm{x}}}(\bm{x})-i k_p \mathbf{u}_p(\bm{x})=o\left(|\bm{x}|^{-1 / 2}\right),\quad \frac{\partial \mathbf{u}_s}{\partial \widehat{\bm{x}}}(\bm{x})-i k_s \mathbf{u}_s(\bm{x})=o\left(|\bm{x}|^{-1 / 2}\right), \quad
 \widehat{\bm{x}} := \frac{1}{|\bm{x}|}\bm{x}.
\]

On the boundary $\Gamma$ the solution ${\bf u}$ of~\eqref{eq:NavD} satisfies either the Dirichlet boundary condition
\[
  {\bf u} ={\bf f}\quad{\rm on}\ \Gamma
\]
or the Neumann boundary condition
\[
 T{\bf u}:=\bm{\sigma}({\bf u})\nnn=\added[id=catR]{
\lambda(\divv   {\bf u})\ \nnn
+2\mu (\nnn\cdot \nabla) {\bf u} -\mu({\rm curl}\ {\bf u}) \ttt}
={\bf g}\quad{\rm on}\ \Gamma.
\]
Here $\nnn$ is \added[id=vD]{the} unit normal vector pointing outward and
 \[
 \ttt  := -\mathrm{Q}\nnn,\quad \mathrm{Q} :=\begin{bmatrix}
                                                            & 1\\
                                                            -1&
                                                           \end{bmatrix}
\]
the unit tangent field positively (counterclockwise if $\Gamma$ is simply connected) oriented. \added[id=catR]{In the scattering applications we consider in this paper, the Dirichlet and Neumann data correspond the boundary values on $\Gamma$ of an incident field ${\bf u}^{\rm inc}$ which is a smooth solution of the Navier equations in the exterior domain; thus ${\bf f}:=-{\bf u}^{\rm inc}$ and ${\bf g}:=-T\ {\bf u}^{\rm inc}$ on $\Gamma$.}

 In view of \eqref{eq:Hdecomp1}  we can look for the fields ${\bf u}$ in the form
\begin{equation}\label{eq:Hdecomp2}
{\bf u}=\nabla u_p+\vcurl  {u_s}
\end{equation}
where     $u_p$ and $u_s$ are respectively solutions of the Helmholtz equations in $\Omega^+$  with wave-numbers $k_p$ and $k_s$  fulfilling the radioactive, or Sommerfeld, condition at infinity (as consequence of the Kupradze 
condition).

In the case of Dirichlet boundary conditions, by taking the scalar product of the decomposition~\eqref{eq:Hdecomp2} with the orthogonal frame $(\ttt, \nnn)$ on $\Gamma$, it is straightforward to see that $u_p$ and $u_s$ must satisfy the following coupled boundary conditions
\begin{equation}\label{eq:DH}
\left|
\begin{array}{rcll}
\partial_{\nnn}u_p+\partial_{\ttt}u_s&=&-{\bf u}^{\rm inc}\cdot \nnn,& {\rm on}\ \Gamma \\
\partial_{\ttt}u_p-\partial_{\nnn}u_s&=&-{\bf u}^{\rm inc}\cdot \ttt,& {\rm on}\ \Gamma.
\end{array}
\right.
\end{equation}
In the case of Neumann boundary conditions, the same approach described above combined with the identities
\begin{eqnarray*}
{\bm{\sigma} [\nabla u_p]}  &=& 2\mu \Hes u_p+\lambda \Delta u_p\  \added[id=catR]{I_2}=
  2\mu \Hes u_p-\lambda k_p^2u_p\ \added[id=catR]{I_2}\\
{\bm{\sigma} [{\protect \overrightarrow{\rm curl}}\ u_s]} &=& -(2\mu \Hes u_s-\mu  \Delta u_s I_2){{\rm Q}}=-(2\mu \Hes u_s+\mu k^2_su_s{I_2}){{\rm Q}}
\end{eqnarray*}
where $\Hes:=\nabla\nabla^\top$ is the Hessian  matrix operator, imply that  $u_p$ and $u_s$ must satisfy the following coupled boundary conditions
\begin{equation}\label{eq:NH}
\left|\!\!
\begin{array}{rclrcl}
 2\mu \nnn^\top \Hes u_p{\bm n}-\lambda k_p^2 u_p&\!\!\!\!+\!\!\!\!&2\mu
 \ttt^\top \Hes u_s{\bm n} &\!\!\!=\!\!\! & -T {\bf u}^{\rm inc}\cdot{\bm n},&\!\!{\rm on}\ \Gamma \\
 2\mu \ttt^\top \Hes u_p{\bm n} &\!\!\!\!-\!\!\!\!& 2\mu \nnn^\top \Hes u_s{\bm n}-\mu k_s^2 u_s &\!\!\!=\!\!\!
 & -T {\bf u}^{\rm inc}\cdot{\bm t},&\!\!{\rm on}\ \Gamma.\\
\end{array}
\right.
\end{equation}
The goal of this paper is to develop robust BIE \added[id=vD]{(Boundary Integral Equation)} formulations for the solution of elastic scattering problems based on the Helmholtz decomposition approach in connection with the systems of boundary conditions~\eqref{eq:DH} and respectively~\eqref{eq:NH}. The BIE we develop use Helmholtz potentials, for which reason we review certain properties of those next section.

\begin{remark}\label{remark:curves:2pi} Throughout this article
we will assume that $\Gamma$ is of length $2\pi$. This simplifies some of the following expressions. The modifications needed to cover the case of arbitrary length curves essentially consist of replacing the wave-number(s) $k$ (as well as $k_p$, $k_s$ and its complexifications $\widetilde{k}_p$ and $\widetilde{k}_s$ \added[id=vD]{that we will introduce later}) by $ Lk/(2\pi)$, its characteristic length. We can see this either by adapting the analysis or by a simple scaling argument.
\end{remark}

\section{Helmholtz BIOs, gradient and Hessian  calculations}\label{GradHessCalculations}

For a given \added[id=catB]{complex wave-number $k$ such that $\Im{k}\geq 0$} and a functional density $\varphi$ on the boundary $\Gamma$ we define the Helmholtz single and double layer potentials in the form
\[
 \SL_k[\varphi](\x):=\int_\Gamma \phi_k(\x-\y)\varphi(\y) {{\rm d}\y},\quad
 \DL_k[\varphi](\x):=\int_\Gamma \frac{\partial \phi_k(\x-\y)}{\partial \nnn(\y)}\varphi(\y) {{\rm d}\y},\ \x\in\mathbb{R}^2\setminus\Gamma.
\]
\added[id=catR]{
Here $\phi_k(\x) = \frac{i}4 H_0^{(1)}(k|\x|)$, with $H_0^{(1)}$ being the zeroth order Hankel function of the first kind  {\cite[Ch. 9]{AbraSteg72}},
is the fundamental solution of the Helmholtz equation.}

The four BIOs \added[id=vD]{(Boundary Integral Operators)} of the Calder\'on's calculus associated with the Helmholtz equation are defined by applying the exterior/interior Dirichlet and Neumann traces on $\Gamma$ (denoted in what follows by $\gamma^+/\gamma^-$ and $\partial_{\nnn}^+/\partial_{\nnn}^-$ respectively) to the Helmholtz single and double layer potentials defined above via the classical relations
\cite{hsiao2008boundary,mclean:2000,Saranen}
\begin{equation}\label{eq:traces}
\begin{aligned}
\gamma^\pm  \SL_k[\varphi] &=\V_{k}[\varphi], \quad & \partial_{\nnn}^\pm  \SL_k[\varphi] &=\mp\frac{1}{2}\varphi +\K_{k}^\top[\varphi],  \\
\gamma^\pm \DL_k[\varphi] &=\pm\frac{1}{2}\varphi+\K_{k}[\varphi], \quad & \partial_{\nnn}^\pm \DL_k[\varphi] &=\W_{k}[\varphi],
\end{aligned}
\end{equation}
where, for $\x\in\Gamma$, the four Helmholtz BIOs are defined as
\[
\begin{aligned}
\V_{k}[\varphi](\x) &:=\int_\Gamma \phi_k(\x-\y)\varphi(\y) {{\rm d}\y},& \quad
\K_k[\varphi](\x) &:= \int_\Gamma \frac{\partial \phi_k(\x-\y)}{\partial \nnn(\y)}\varphi(\y) {{\rm d}\y},\\
\K_k^\top[\varphi](\x)  &:= \int_\Gamma \frac{\partial \phi_k(\x-\y)}{\partial \nnn(\x)}\varphi(\y) {{\rm d}\y},&\quad
\W_k[\varphi](\x) &:=  \mathrm{f.p.}\int_\Gamma \frac{\partial^2 \phi_k( \x-\y )}{\partial \nnn(\x)\,\partial \nnn(\y) }\varphi(\y) {{\rm d}\y},\
\end{aligned}
\]
(``f.p.'' stands for {\em finite part} since the kernel of the operator is strongly singular)
are respectively the single layer, double layer, adjoint double and hypersingular operator. The well-know jump relations, which can be extended to merely Lipschitz curves, are  then simple byproducts:
\begin{equation}\label{eq:jumps}
\begin{aligned}
\added[id=vD]{\added[id=vD]{\llbracket}} \gamma \SL_k[\varphi]  \added[id=vD]{\added[id=vD]{\rrbracket}} &:=\gamma^+ \SL_k[\varphi] - \gamma^-\SL_k[\varphi] =0, \ & \added[id=vD]{\llbracket}\partial_{\nnn}   \SL_k[\varphi]  \added[id=vD]{\rrbracket}  &:=\partial_{\nnn}^+ \SL_k[\varphi] - \partial_{\nnn}^- \SL_k[\varphi] =- \varphi, \\
\added[id=vD]{\llbracket}  \gamma   \DL_k[\varphi]  \added[id=vD]{\rrbracket}  &:=\gamma ^+ \DL_k[\varphi] - \gamma ^- \DL_k[\varphi] = \varphi,\  &
\added[id=vD]{\llbracket}\partial_{\nnn}   \DL_k[\varphi]  \added[id=vD]{\rrbracket}  &:=\partial_{\nnn}^+ \DL_k[\varphi] - \partial_{\nnn}^- \DL_k[\varphi] = 0.
\end{aligned}
\end{equation}
\added[id=catB]{Here in what follows we will denote the action of Helmholtz layer potentials and their associated BIOs on functional densities defined on the boundary $\Gamma$ by bracketing the densities.} Let $H^s(\Gamma)$ be the Sobolev space on $\Gamma$ of order $s$. It is well known then that, for smooth closed boundaries $\Gamma$,
\[
\K_k,\  \K_k^\top :H^s(\Gamma)\to
H^{s+3}(\Gamma),\quad
\V_k:H^s(\Gamma)\to H^{s+1}(\Gamma),\quad
\W_k:H^{s+1}(\Gamma)\to H^{s}(\Gamma),
\]
that is, the Helmholtz BIOs $\K_k, \K_k^\top$ and $\V_k, \W_k$ are pseudodifferential  operators of orders $-3$, $-1$ and $1$ respectively~\added[id=vD]{\cite{hsiao2008boundary,Saranen,mclean:2000,dominguez2012fully}}. We will express these mapping properties of the Helmholtz BIOS in what follows in the succinct form
\[
 \K_k,\ \K_k^{\top}\in\mathrm{OPS}(-3),\quad
 \V_k  \in\mathrm{OPS}(-1),\quad
 \W_k\in\mathrm{OPS}(1).
 \]

Having reviewed properties of Helmholtz BIOs that will be useful in what follows, we turn our attention to connecting the systems of boundary conditions~\eqref{eq:DH} and~\eqref{eq:NH} to the four Helmholtz BIOs of the Calder\'on calculus. In particular, since the boundary conditions~\eqref{eq:NH} feature the Hessian operators, more involved calculations are necessary to express them via the classical four BIOs. We detail in what follows these derivations which are largely based on integration by parts techniques akin to those leading to Maue's identity~\eqref{eq:maue:helmholtz}.

We will follow from now on the convention that for a vector function $\bm{\varphi} =(\varphi_1,\varphi_2)^\top$ and a pseudodifferential operator $\mathrm{R}$,
 \begin{equation}\label{eq:notation}
  \mathrm{R}\bm{\varphi}:=(\mathrm{R} {\varphi}_1,\mathrm{R} {\varphi}_2)^\top.
 \end{equation}
The following identities will be used in the proofs of the next results
\begin{equation}\label{eq:ttnn}
 \ttt \ttt^\top+ \nnn \nnn^\top = I_2,\qquad
 \ttt \ttt^\top- \nnn \nnn^\top = I_2 - 2 \nnn \nnn^\top.
\end{equation}
(Notice in pass that the second one is just a reflection, a Householder matrix, about the $\ttt$ direction.) Finally, \deleted{the} we define the \added[id=vD]{signed} curvature $\kappa$  in the following manner
\begin{equation}\label{eq:def:kappa}
  \added[id=catR]{\partial_{\ttt} \ttt=-\kappa\ \nnn,\quad \partial_{\ttt}\nnn=\kappa\ \ttt.}
 \end{equation}


We will study now the exterior trace for the gradient of the single and double layer potentials. Although these results (Propositions \ref{proposition:grad:SL} and \ref{proposition:grad:DL}) can be certainly found in the literature, we cite for example \cite{Kress,qbx:2013,kolm2003quadruple}, we will prefer to present their proofs here for the sake of completeness and to prepare both the statements and the proofs themselves of the corresponding results for the Hessian  matrix of the layer boundary potentials for which the authors were not able to find detailed proofs.  Clearly very similar results can be derived for the {\em interior} traces with the same techniques but since our aim is the study of exterior problems, we omit the study of this case, leaving it as (a simple) exercise for the reader.

\begin{proposition} \label{proposition:grad:SL}For any smooth closed curve sufficiently smooth,
\begin{equation}\label{eq:01:proposition:grad:SL}
\gamma^+ \nabla {\rm SL}_k [\varphi] = -\frac{1}2 \varphi\, \nnn + \nabla \V_k[\varphi]
\end{equation}
with \added[id=vD]{for $\bm{x}\in\Gamma$}
\begin{subequations}\label{eq:02:proposition:grad:SL}
\begin{eqnarray}
\nabla \V_k [\varphi](\x) &:=&  {\rm p.v.}\int_\Gamma (\nabla\phi_k)(\x-\y)\varphi(\y) \,\mathrm{d}\y\nonumber\\
&=&\nnn(\x)\ \K_k^\top[\varphi](\x)
+\ttt(\x) \partial_{\ttt} \V_k[\varphi](\x)
\label{eq:02b:proposition:grad:SL}\\
&=&
 \V_k[\partial_{\ttt} (\varphi\,\ttt)](\x) -\K_k [\varphi\,\nnn](\x)  \label{eq:02a:proposition:grad:SL}\nonumber\\
 &=& -\K_k [\varphi\,\nnn](\x)
 + \V_k [\partial_{\ttt} \varphi\,\ttt ](\x)
- \V_k[\kappa \varphi\,\nnn](\x)
\label{eq:02c:proposition:grad:SL}
\end{eqnarray}
\end{subequations}
where {\rm p.v.} stands for ``principal value'' of the integral.
 \end{proposition}

 \begin{proof}
For $\x\in\Omega^+$ sufficiently close to $\Gamma$ extend
 \[
  \nnn(\x) = \nnn(\widehat{\x}), \quad
  \ttt(\x) = \ttt(\widehat{\x}), \qquad \x = \widehat{\x}+\varepsilon \nnn(\widehat{\x}), \quad \widehat{\x}\in \Gamma
 \]
 with $\varepsilon>0$ sufficiently small.

 By \eqref{eq:ttnn}
 \[
\nabla_{\x} \phi_k({\x}-{\y})=  \left(\nabla_{\x} \phi_k({\x}-{\y})\cdot \nnn(\x)\right)\nnn(\x) + \left(\nabla_{\x} \phi_k({\x}-{\y})\cdot\ttt(\x)\right)\ttt(\x)
 \]
and therefore
 \begin{equation}\label{eq:02bb:proposition:grad:SL}
 \begin{aligned}
\nabla  {\rm SL}_k& [\varphi](\x)
= & \partial_{\nnn} \SL_k [\varphi ](\x)\nnn(\x)+(\nabla_\x  \SL_k  [\varphi ](\x)\cdot  {\ttt}(\x)) {\ttt}(\x).
\end{aligned}
\end{equation}
Alternatively,
\[
 \nabla_{\x} \phi_k({\x}-{\y})=   - \left(\nabla_{\y} \phi_k({\x}-{\y})\cdot\ttt(\y)\right)\ttt(\y)-\left(\nabla_{\y} \phi_k({\x}-{\y})\cdot \nnn(\y)\right)\nnn(\y)
\]
which implies, after integration by parts,
\begin{align}
\nabla & {\rm SL}_k[\varphi](\x) \nonumber\\
&= \int_{\Gamma }  \phi_k({\x}-{\y})  \partial_{\ttt(\y)}  \left(\ttt (\y)  \varphi(\y)\right){\rm d}\y
-\int_{\Gamma }\left(\nabla_{\y} \phi_k({\x}-{\y})\cdot \nnn(\y)\right)\nnn(\y)\varphi(\y){\rm d}\y
\nonumber \\
&=
{\rm SL}_k [\partial_{\ttt}(\varphi\,\ttt)](\x)
-{\rm DL}_k[\varphi\,\nnn](\x)\label{eq:02cc1:proposition:grad:SL}
\\
&=
{\rm SL}_k[\partial_{\ttt}\varphi\, \ttt](\x)-
{\rm SL}_k[\kappa\varphi\,\nnn](\x)
-{\rm DL}_k[\varphi\,\nnn](\x).
\label{eq:02cc2:proposition:grad:SL}
  \end{align}
  Clearly,  \eqref{eq:02b:proposition:grad:SL} and
\eqref{eq:02c:proposition:grad:SL} follows now from the jump relations \eqref{eq:traces} applied to \eqref{eq:02bb:proposition:grad:SL} and \eqref{eq:02cc1:proposition:grad:SL}-\eqref{eq:02cc2:proposition:grad:SL} respectively.

 \end{proof}

 Recall the relations
\begin{equation}\label{eq:2.19}
\begin{aligned}
  \overrightarrow{{\rm curl}}\,{u}\cdot\nnn &= \nabla u \cdot \ttt=\partial_{\ttt} u, &
  \overrightarrow{{\rm curl}}\,{u}\cdot\ttt = -\nabla u \cdot \bm{n}=-\partial_{\nnn} u,\\
  \  \nabla (\divv {\bf u}) &= \Delta{\bf u} +\overrightarrow{{\rm curl}}\,\curl {\bf u}.
\end{aligned}
\end{equation}
($\Delta{\bf u}$ is just, according to the convention \eqref{eq:notation}, the vector laplacian) and
\begin{equation}\label{eq:2.20}
\curl({u {\bf v}})= \nabla u \cdot \mathrm{Q} {\bf v}+ u \curl  {\bf v}.
\end{equation}

\begin{lemma}\label{lemma:2.1}
It holds
\begin{equation}\label{eq:lemma:2.1:03}
 \nabla {\rm DL}_k [\varphi] = k^2{\rm SL}_k[\varphi\,\nnn]  + \overrightarrow{{\rm curl}}\,{\rm SL}_k[\partial_{\ttt}\varphi].
\end{equation}
\end{lemma}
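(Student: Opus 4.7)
The plan is to reduce the gradient of the double layer to expressions involving only the single layer by viewing the single layer of the vector density $\varphi\,\nnn$ as a vector field and invoking the Helmholtz/Hodge identity for the vector Laplacian. Concretely, introduce
\[
\mathbf{U}(\x):={\rm SL}_k[\varphi\,\nnn](\x)=\int_\Gamma \phi_k(\x-\y)\,\varphi(\y)\,\nnn(\y)\,\mathrm{d}\y,\qquad \x\notin\Gamma,
\]
which, by the convention \eqref{eq:notation}, satisfies $\Delta\mathbf{U}+k^2\mathbf{U}=\mathbf{0}$ componentwise in $\R^2\setminus\Gamma$. Via the standard vector calculus identity relating the Laplacian to $\nabla\divv$ and $\vcurl\curl$ (cf.\ the last part of \eqref{eq:2.19}), this will give
\[
\nabla(\divv\mathbf{U})= -k^2\mathbf{U}-\vcurl(\curl\mathbf{U}),
\]
so it remains only to identify $\divv\mathbf{U}$ and $\curl\mathbf{U}$ in terms of layer potentials of the scalar $\varphi$.

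For $\divv\mathbf{U}$, I would interchange derivative and integral, use $\nabla_\x\phi_k(\x-\y)=-\nabla_\y\phi_k(\x-\y)$, and recognize a normal derivative in $\y$:
\[
\divv\mathbf{U}(\x)=\int_\Gamma \nabla_\x\phi_k(\x-\y)\cdot\nnn(\y)\,\varphi(\y)\,\mathrm{d}\y=-\int_\Gamma \partial_{\nnn(\y)}\phi_k(\x-\y)\,\varphi(\y)\,\mathrm{d}\y=-{\rm DL}_k[\varphi](\x).
\]
For $\curl\mathbf{U}$, I would expand $\curl\mathbf{U}=\partial_{x_2}U_1-\partial_{x_1}U_2$ and observe that, in view of $\ttt=(-n_2,n_1)^\top$, the integrand collapses to $\nabla_\x\phi_k(\x-\y)\cdot\ttt(\y)$; switching the $\x$-derivative to a $\y$-derivative and then integrating by parts along the closed curve $\Gamma$ (so the boundary contribution vanishes) yields
\[
\curl\mathbf{U}(\x)=-\int_\Gamma \partial_{\ttt(\y)}\phi_k(\x-\y)\,\varphi(\y)\,\mathrm{d}\y=\int_\Gamma \phi_k(\x-\y)\,\partial_\ttt\varphi(\y)\,\mathrm{d}\y={\rm SL}_k[\partial_\ttt\varphi](\x).
\]

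Substituting these two identities into the displayed consequence of the vector Laplacian identity gives
\[
-\nabla{\rm DL}_k[\varphi]=-k^2\,{\rm SL}_k[\varphi\,\nnn]-\vcurl{\rm SL}_k[\partial_\ttt\varphi],
\]
and rearranging yields \eqref{eq:lemma:2.1:03}. The only delicate point is sign-tracking through the vector calculus identity together with the $\nabla_\x=-\nabla_\y$ flip and the integration by parts; these are the places where a misplaced sign would destroy the statement, so I would double-check them by testing the final formula on a simple density or by comparing to the divergence-free/irrotational decomposition of $\mathbf{U}$ implicit in the proof.
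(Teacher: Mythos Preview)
Your proof is correct and follows essentially the same route as the paper: both arguments apply the vector identity linking $\nabla\divv$, $\Delta$, and $\vcurl\curl$ to the vector single layer $\mathbf{U}={\rm SL}_k[\varphi\,\nnn]$, identify $\divv\mathbf{U}=-{\rm DL}_k\varphi$, and use integration by parts along $\Gamma$ to convert the curl term into ${\rm SL}_k[\partial_{\ttt}\varphi]$. The only cosmetic difference is that the paper carries out the manipulations pointwise under the integral sign, whereas you first package the potential as $\mathbf{U}$ and then apply the identity at the level of the vector field; your sign tracking is in fact slightly cleaner.
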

\begin{proof}
Notice first
\[
 \nabla_{\x}((\nabla_{\y}\phi_k(\x-\y))\cdot \nnn(\y)) = -\nabla_{\x}\left(\divv_{\x}(\phi_k(\x-\y)  \nnn(\y))\right).
\]
The third identity in \eqref{eq:2.19} implies  then
\begin{eqnarray*}
 \nabla {\rm DL}_k [\varphi](\x)&=& -\int_{\Gamma} \Delta_{\x} \phi_k(\x-\y)  \nnn(\y)\varphi(\y)\,\mathrm{d}\y
 \\
 &&- \int_{\Gamma} \overrightarrow{{\rm curl}}\,_{\x}(\curl_{\x} \phi_k(\x-\y)  \nnn(\y))\varphi(\y)\,\mathrm{d}\y
 \\
 %
  &=&   k^2 \SL_{k} [\varphi\nnn](\x)  - \int_{\Gamma} \overrightarrow{{\rm curl}}\,_{\x}  \partial_{\ttt(\y)}\phi_k(\x-\y)  \varphi(\y)\,\mathrm{d}\y
\end{eqnarray*}
where we have made use that $\phi_k$ is the fundamental solution of the Helmholtz equation along with the first identity in \eqref{eq:2.19}. The result follows now by integration by parts in the last integral.
\end{proof}



\begin{proposition}\label{proposition:grad:DL}
 It holds
\[
\gamma^+ \nabla {\rm DL}_k [\varphi] =  \frac{1}2 \partial_{\ttt} \varphi\, \ttt + \nabla \K_k[\varphi]
\]
where \added[id=vD]{for $\bm{x}\in\Gamma$}
\begin{subequations}\label{eq:02:proposition:grad:DL}
\begin{eqnarray}
 \nabla \K_k[\varphi](\x )&:=& {\rm p.v.}\int_\Gamma \nabla_{\x}\partial_{\nnn(\y)}\phi_k(\added[id=catB]{\x}-\y)\, \varphi(\y)\,\mathrm{d}\y\nonumber \\
 &=&  k^2\V_k[\varphi \nnn](\x)+ \partial_{\ttt} \V_k [\partial_{\ttt}\varphi]\!(\x)\,\nnn(\x) - \K_k^\top[ \partial_{\ttt}\varphi] (\x)\, {\ttt}(\x)  \label{eq:02a:proposition:grad:DL}
 \\
 &=& \W_k[\varphi] (\x)\, \nnn(\x)+  \big(
  k^2\V_k[\varphi \nnn](\x) \cdot\ttt(\x)-
 \K_k^\top [\partial_{\ttt}\varphi](\x)\big)\, \ttt (\x) .
 \label{eq:02b:proposition:grad:DL}
\end{eqnarray}
\end{subequations}
\end{proposition}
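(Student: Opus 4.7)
The plan is to reduce the statement to things we already know by invoking Lemma~\ref{lemma:2.1}, which expresses $\nabla {\rm DL}_k\varphi$ as $k^2{\rm SL}_k(\varphi\,\nnn)+\overrightarrow{{\rm curl}}\,{\rm SL}_k\partial_{\ttt}\varphi$. The first summand is a Helmholtz single layer potential (of a vector density), whose exterior trace is continuous and equals $k^2\V_k(\varphi\,\nnn)$. The second summand is the vector curl of a scalar single layer potential, whose exterior trace can be read off from the already-established trace formulas for $\nabla\,{\rm SL}_k$ once we rewrite the vector curl in the $(\ttt,\nnn)$ frame.

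Concretely, for a smooth $u$ in $\Omega^+$ the identities~\eqref{eq:2.19} give
\[
  \overrightarrow{{\rm curl}}\, u\cdot\nnn = \partial_{\ttt}u,\qquad
  \overrightarrow{{\rm curl}}\, u\cdot\ttt = -\partial_{\nnn}u,
\]
so that, applying these to $u=\SL_k\partial_{\ttt}\varphi$ and using the classical jump relations~\eqref{eq:traces},
\[
  \gamma^+\overrightarrow{{\rm curl}}\,{\rm SL}_k\partial_{\ttt}\varphi
   = \bigl(\partial_{\ttt}\V_k\partial_{\ttt}\varphi\bigr)\nnn
     + \Bigl(\tfrac12\partial_{\ttt}\varphi - \K_k^{\top}\partial_{\ttt}\varphi\Bigr)\ttt.
\]
Adding the (continuous) contribution $k^2\V_k(\varphi\,\nnn)$ from the first summand and isolating the only term carrying a jump, namely $\tfrac12\partial_{\ttt}\varphi\,\ttt$, one obtains
\[
  \gamma^+\nabla{\rm DL}_k\varphi = \tfrac12\partial_{\ttt}\varphi\,\ttt
  + k^2\V_k(\varphi\,\nnn) + \bigl(\partial_{\ttt}\V_k\partial_{\ttt}\varphi\bigr)\nnn
  - \bigl(\K_k^{\top}\partial_{\ttt}\varphi\bigr)\ttt,
\]
which is precisely~\eqref{eq:02a:proposition:grad:DL}. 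The principal-value representation of $\nabla\K_k\varphi$ is then inherited from the fact that $\nabla\partial_{\nnn(\y)}\phi_k$ is integrable in the principal-value sense away from the jump, exactly as in Proposition~\ref{proposition:grad:SL}.

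For the alternative form~\eqref{eq:02b:proposition:grad:DL} I would simply split $k^2\V_k(\varphi\,\nnn)$ into its normal and tangential components via~\eqref{eq:ttnn}. The tangential part $k^2\V_k(\varphi\,\nnn)\cdot\ttt$ stays as written, while the normal component combines with $\partial_{\ttt}\V_k\partial_{\ttt}\varphi$ to produce the hypersingular operator by the Maue identity~\eqref{eq:maue:helmholtz},
\[
  \W_k\varphi = \partial_{\ttt}\V_k\partial_{\ttt}\varphi + k^2\,\nnn\cdot\V_k(\varphi\,\nnn),
\]
yielding the stated formula. The main obstacle is purely bookkeeping: one must track the exterior-trace signs of $\partial_{\nnn}^+\SL_k$ versus $\partial_{\nnn}^-\SL_k$ and the sign conventions in~\eqref{eq:2.19} for $\overrightarrow{{\rm curl}}\,\cdot\ttt$, and then align the resulting constants with the Maue identity so that the principal-value and jump parts separate cleanly. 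No new analytic ingredient is required beyond Lemma~\ref{lemma:2.1}, the jump relations~\eqref{eq:traces}, the frame identities~\eqref{eq:ttnn}–\eqref{eq:2.19}, and Maue's identity.
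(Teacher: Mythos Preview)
Your proposal is correct and follows essentially the same route as the paper: both start from Lemma~\ref{lemma:2.1}, resolve $\overrightarrow{{\rm curl}}\,{\rm SL}_k\partial_{\ttt}\varphi$ in the $(\ttt,\nnn)$ frame via~\eqref{eq:2.19}, and apply the single-layer jump relations. The only cosmetic difference is that the paper obtains~\eqref{eq:02b:proposition:grad:DL} by recognizing directly that $\nnn\cdot\gamma^+\nabla{\rm DL}_k\varphi=\partial_{\nnn}^+{\rm DL}_k\varphi=\W_k\varphi$ (and then records Maue's identity as a byproduct), whereas you invoke Maue's identity to pass from~\eqref{eq:02a:proposition:grad:DL} to~\eqref{eq:02b:proposition:grad:DL}; the content is the same.
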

\begin{proof} From Lemma \ref{lemma:2.1}       we derive
\begin{eqnarray*}
\gamma^+ (\ttt\cdot \nabla {\rm DL}_k [\varphi]) &=&\gamma^+( k^2 \ttt\cdot {\rm SL}_k[\varphi\,\nnn]  + \ttt\cdot \overrightarrow{{\rm curl}}\,{\rm SL}_k[\partial_{\ttt}\varphi])
\\
&=&\gamma^+(k^2 \ttt\cdot {\rm SL}_k[\varphi\,\nnn]  - \nnn\cdot \nabla {\rm SL}_k[\partial_{\ttt}\varphi]) \\
 &=& k^2\ttt\cdot \V_k[\varphi\,\nnn]  -  \K_k^\top [\partial_{\ttt}\varphi] +\frac{1}2 \added[id=catR]{\partial_{\ttt} \varphi} \\
\gamma^+ ( \nnn\cdot \nabla {\rm DL}_k [\varphi])&=&\gamma^+ (k^2 \nnn\cdot {\rm SL}_k[\varphi\,\nnn] + \added[id=catR]{\ttt}\cdot \nabla {\rm SL}_k[\partial_{\ttt}\varphi])
= k^2 \nnn\cdot {\rm V}_k[\varphi\,\nnn]  + \partial_{\ttt} \V_k[\partial_{\ttt} \varphi].
\end{eqnarray*}
Alternatively,
\[
 \gamma^+ ( \nnn\cdot \nabla {\rm DL}_k [\varphi]) = \partial_{\nnn} {\rm DL}_k[\varphi] = \W_k[\varphi].
\]
Using \eqref{eq:ttnn} the proof is finished.
\end{proof}
\added[id=catB]{In order to further clarify certain notations we used in the proof of Proposition~\ref{proposition:grad:DL} whereby normals/tangents appear to act both on the left and on the right of integral operators, we specify for instance that the double action integral operator $\nnn\cdot \V_k[\varphi\,\nnn]$ is understood as
\[
\left(\nnn\cdot \V_k[\varphi\,\nnn]\right)(\x)=\nnn(\x)\cdot\int_\Gamma \phi_k(\x-\y)\, \varphi(\y)\, \nnn(\y)\,\mathrm{d}\y=\int_\Gamma \phi_k(\x-\y)\, \varphi(\y)\, \nnn(\x)\cdot\nnn(\y)\,\mathrm{d}\y.
\]
The other integral operators that feature such double action patterns are to be understood in the similar manner. In other words, we have adopted  the following convention that will be used hereafter:  for $\mathrm{L}$ a {\em scalar} operator, $\bm{d}=(d_1,d_2)^\top$ a vector function,
\[
\mathrm{L}[\varphi\bm{d}] = \begin{bmatrix}
\mathrm{L}[d_1 \varphi]  & \mathrm{L}_k[d_2 \varphi]
                              \end{bmatrix}^\top.
\]
}

We mention that as byproduct of Proposition \ref{proposition:grad:DL} the well-known Maue identity (cf. \cite[Ch. 9]{mclean:2000}, \cite[Ch. 7]{Kress} or \cite[Ch. 2]{Saranen}) has been derived for the hypersingular operator:
\begin{equation}\label{eq:maue:helmholtz}
 \W_{k}[\varphi]  =\partial_{\ttt} \V_{k}[\partial_{\ttt}\varphi] +\added[id=catR]{k^2} \nnn \cdot \V_{k}[\nnn \varphi].
\end{equation}

We consider next Dirichlet and Neumann traces of the Hessian for the Helmholtz single and double layer operators which play a major role in the Helmholtz decomposition approach for the two dimensional Navier equations with Neumann boundary conditions.
\added[id=vD]{Before stating the result, let us clarify the notation that will be used:
\[
 \nabla {\rm L}_k[\varphi {\bm d}^\top]:= \begin{bmatrix}
                                   \nabla \mathrm{L}_k[ \varphi d_1] &
                                   \nabla \mathrm{L}_k[\varphi d_2]
                                  \end{bmatrix}, \quad \mathrm{L}_k\in\{\V_k, \K_k\},\ \bm{d} = (d_1,d_2)^\top \in\{\nnn,\ttt\}.
\]
(Recall that the gradient operator $\nabla$ is always understood in this paper as a column vector). Consequently, $ \nabla {\rm L}_k[\varphi {\bm d}^\top]$ is a $2\times 2$ block matrix operator.
}

\begin{theorem}\label{theo:Hessians}
It holds
\begin{eqnarray}
\gamma^+(\Hes {\rm SL}_k[\varphi])
 &=&- \frac12\partial_{\ttt}\varphi \,(\nnn \ttt^\top+\ttt \nnn^\top)-\frac12\kappa \varphi \,(I_2-2\nnn\nnn^\top)\nonumber
 \\ & &  +  \nabla \V_k[\partial_{\ttt} \varphi\,\ttt^\top]  -
           \nabla \V_k[\kappa \varphi\,\nnn^\top]   -
           \nabla \K_k [\varphi\,\nnn^\top]\label{eq:01:theo:Hessians}\\
\gamma^+(\Hes {\rm DL}_k[\varphi])
 &=& \frac12\partial_{\ttt}^2\varphi \,(I_2-2\nnn\nnn^\top)-\frac12\kappa \partial_{\ttt}\varphi \,(\nnn \ttt^\top+\ttt \nnn^\top) -\frac12k^2\varphi \,\nnn\nnn^\top \nonumber
 \\ &&+  k^2 \nabla \V_k[\varphi\,\nnn^\top]+
      \nabla \V_k[\partial^2_{\ttt} \varphi\,\nnn^\top]  +
           \nabla \V_k[\kappa \partial_{\ttt}\varphi\,\ttt^\top]   +
           \nabla \K_k [\partial_{\ttt}\varphi\,\ttt^\top].
           \label{eq:02:theo:Hessians}
\end{eqnarray}
\end{theorem}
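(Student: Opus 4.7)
The plan is to reduce both Hessian traces to Propositions~\ref{proposition:grad:SL} and~\ref{proposition:grad:DL} by first representing $\nabla \SL_k \varphi$ and $\nabla \DL_k \varphi$ as combinations of layer potentials of vector-valued densities on $\Gamma$, valid pointwise in $\Omega^+$, and then differentiating once more componentwise. For~\eqref{eq:01:theo:Hessians} the starting point is the identity $\nabla \SL_k \varphi = \SL_k \partial_\ttt(\varphi\,\ttt) - \DL_k(\varphi\,\nnn)$ derived within the proof of Proposition~\ref{proposition:grad:SL} as a pointwise equality in $\Omega^+$. Taking one more gradient of both sides, with the convention $\Hes u = \nabla \nabla^\top u$ so that the Jacobian of a vector field has the gradients of its scalar components as its columns, yields in $\Omega^+$
\begin{equation*}
\Hes \SL_k \varphi \;=\; \nabla \bigl[\SL_k \partial_\ttt(\varphi\,\ttt)\bigr]^\top \;-\; \nabla \bigl[\DL_k(\varphi\,\nnn)\bigr]^\top.
\end{equation*}
Applying $\gamma^+$ column-by-column and invoking Propositions~\ref{proposition:grad:SL} and~\ref{proposition:grad:DL} on the vector densities $\partial_\ttt(\varphi\,\ttt)$ and $\varphi\,\nnn$ splits the result into a jump part $-\tfrac12 \nnn [\partial_\ttt(\varphi\,\ttt)]^\top - \tfrac12 \ttt [\partial_\ttt(\varphi\,\nnn)]^\top$ and a regular part $\nabla \V_k[\partial_\ttt(\varphi\,\ttt)]^\top - \nabla \K_k[\varphi\,\nnn]^\top$.

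Expanding $\partial_\ttt(\varphi\,\ttt) = \partial_\ttt\varphi\,\ttt - \kappa\varphi\,\nnn$ (the decomposition already used in~\eqref{eq:02b:proposition:grad:SL}) together with the companion identity $\partial_\ttt(\varphi\,\nnn) = \partial_\ttt\varphi\,\nnn + \kappa\varphi\,\ttt$, the regular part reproduces verbatim the second line of~\eqref{eq:01:theo:Hessians}, while the jump part simplifies to $-\tfrac12 \partial_\ttt\varphi\,(\nnn\ttt^\top + \ttt\nnn^\top) + \tfrac12 \kappa\varphi\,(\nnn\nnn^\top - \ttt\ttt^\top)$; rewriting $\nnn\nnn^\top - \ttt\ttt^\top = -(I_2 - 2\nnn\nnn^\top)$ via the second identity of~\eqref{eq:ttnn} delivers the first line of~\eqref{eq:01:theo:Hessians}.

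For~\eqref{eq:02:theo:Hessians} I start instead from Lemma~\ref{lemma:2.1}, namely $\nabla \DL_k \varphi = k^2 \SL_k(\varphi\,\nnn) + \vcurl \SL_k \partial_\ttt\varphi$ in $\Omega^+$. Since $\vcurl u = \mathrm{Q}\nabla u$, taking one more gradient gives
\begin{equation*}
\Hes \DL_k \varphi \;=\; k^2\, \nabla [\SL_k(\varphi\,\nnn)]^\top \;+\; \Hes\bigl(\SL_k \partial_\ttt\varphi\bigr)\,\mathrm{Q}^\top.
\end{equation*}
The first summand under $\gamma^+$ is immediate from Proposition~\ref{proposition:grad:SL} applied to the vector density $\varphi\,\nnn$ and produces the contribution $-\tfrac12 k^2 \varphi\,\nnn\nnn^\top + k^2 \nabla \V_k(\varphi\,\nnn^\top)$ of~\eqref{eq:02:theo:Hessians}. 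For the second summand I substitute the already-proved formula~\eqref{eq:01:theo:Hessians} with density $\partial_\ttt\varphi$ and post-multiply by $\mathrm{Q}^\top$. The elementary identities $\mathrm{Q}\ttt = \nnn$ and $\mathrm{Q}\nnn = -\ttt$ (so $\ttt^\top \mathrm{Q}^\top = \nnn^\top$ and $\nnn^\top \mathrm{Q}^\top = -\ttt^\top$) yield $(\nnn\ttt^\top + \ttt\nnn^\top)\mathrm{Q}^\top = -(I_2 - 2\nnn\nnn^\top)$ and $(I_2 - 2\nnn\nnn^\top)\mathrm{Q}^\top = \nnn\ttt^\top + \ttt\nnn^\top$, which interchange the two jump structures of~\eqref{eq:01:theo:Hessians} into those of~\eqref{eq:02:theo:Hessians}; the same action $\ttt^\top \mapsto \nnn^\top$, $\nnn^\top \mapsto -\ttt^\top$ on the row factors of the regular part transforms $\nabla \V_k(\partial_\ttt^2\varphi\,\ttt^\top) - \nabla \V_k(\kappa \partial_\ttt\varphi\,\nnn^\top) - \nabla \K_k(\partial_\ttt\varphi\,\nnn^\top)$ into $\nabla \V_k(\partial_\ttt^2\varphi\,\nnn^\top) + \nabla \V_k(\kappa \partial_\ttt\varphi\,\ttt^\top) + \nabla \K_k(\partial_\ttt\varphi\,\ttt^\top)$, exactly as stated.

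The main delicate point throughout is tensor bookkeeping: one must be careful that $\nabla$ and $\gamma^+$ interact with $\SL_k$, $\DL_k$, $\V_k$, $\K_k$ applied to vector densities only via the component-wise convention~\eqref{eq:notation}; that the columns of the Jacobian of a vector field correspond to gradients of its scalar components; and that the Jacobian of $\vcurl \SL_k \psi$ reads $\Hes(\SL_k \psi)\,\mathrm{Q}^\top$, with the transpose on $\mathrm{Q}$ and not on the Hessian. Once these conventions are fixed, what remains is a systematic algebraic application of Propositions~\ref{proposition:grad:SL} and~\ref{proposition:grad:DL}, the Frenet-type expansions of $\partial_\ttt(\varphi\,\ttt)$ and $\partial_\ttt(\varphi\,\nnn)$, and the two identities in~\eqref{eq:ttnn}.
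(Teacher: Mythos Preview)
Your proposal is correct and follows essentially the same route as the paper: represent $\nabla\SL_k\varphi$ via the single/double layer identity from the proof of Proposition~\ref{proposition:grad:SL}, differentiate once more and apply Propositions~\ref{proposition:grad:SL}--\ref{proposition:grad:DL} componentwise for~\eqref{eq:01:theo:Hessians}; then use Lemma~\ref{lemma:2.1}, Proposition~\ref{proposition:grad:SL}, and the just-proved~\eqref{eq:01:theo:Hessians} post-multiplied by $\mathrm{Q}^\top$ for~\eqref{eq:02:theo:Hessians}. The only cosmetic difference is that the paper expands $\partial_\ttt(\varphi\,\ttt)=\partial_\ttt\varphi\,\ttt-\kappa\varphi\,\nnn$ before taking the second gradient, whereas you carry the compact form and expand afterwards; the algebra and the jump bookkeeping coincide.
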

\begin{proof}
\added[id=vD]{Let us point out first that with the notation $\nabla^\top u = (\nabla u)^\top$, we can write the Hessian as $\Hes u =\nabla \nabla^\top u$). This identity will be used profusely in this proof. Hence, we start from the identity}
\begin{eqnarray*}
\nabla^\top{\rm SL}_k[\varphi](\x) &=&
{\rm SL}_k[\partial_{\ttt} \varphi\,\ttt^\top](\x)-{\rm SL}_k[\kappa \varphi\,\nnn^\top](\x)-{\rm DL}_k[\varphi\,\nnn^\top](\x).
\end{eqnarray*}
\added[id=vD]{(from \eqref{eq:02cc2:proposition:grad:SL}    in Proposition
 \ref{proposition:grad:SL})}.
Therefore we can apply Proposition \ref{proposition:grad:SL} (\added[id=vD]{again}) for the single layer terms  and  Proposition \ref{proposition:grad:DL} for the double layer to obtain
\begin{eqnarray*}
\gamma^+ \Hes  {\rm SL}_k[\varphi]  &=&
%
\nabla \V_k[\partial_{\ttt} \varphi\,\ttt^\top]  -
 \nabla \V_k[\kappa \varphi\,\nnn^\top]  -
 \nabla \K_k [\varphi\,\nnn^\top]
 \\
 &&
 \added[id=catR]{- \frac12\partial_{\ttt}\varphi \,\nnn \ttt^\top +\frac12\kappa \varphi \,\nnn\nnn^\top-\frac12\ttt\, (\partial_{\ttt}\varphi\, \nnn^\top +\kappa \varphi\, \ttt^\top)}\\
 &=&
 \nabla \V_k[\partial_{\ttt} \varphi\,\ttt^\top]  -
 \nabla \V_k[\kappa \varphi\,\nnn^\top]  -
 \nabla \K_k [\varphi\,\nnn^\top]
 \\
 &&
 - \frac12\partial_{\ttt}\varphi \,(\nnn \ttt^\top+\ttt \nnn^\top) -\frac12\kappa \varphi \,(\ttt\ttt^\top-\nnn\nnn^\top)
\end{eqnarray*}
Then \eqref{eq:01:theo:Hessians} follows now from \eqref{eq:ttnn}.

To prove \eqref{eq:02:theo:Hessians} we start from Lemma \ref{lemma:2.1}
which implies, using the vector calculus identity $\overrightarrow{{\rm curl}}\, = \mathrm{Q} \nabla$,
\begin{eqnarray*}
 \Hes {\rm DL}_k[\varphi] &=&
  k^2\nabla \mathrm{SL}_k[\varphi \nnn^\top]
 +\nabla\nabla^\top \mathrm{SL}_k[\partial_{\ttt}\varphi]\ \mathrm{Q}^\top.
\end{eqnarray*}
Proposition \ref{proposition:grad:SL} for the first term in the Hessian of the double layer identity above and formula~\eqref{eq:01:theo:Hessians} for the second term in the same equation imply
\begin{eqnarray*}
 \added[id=vD]{\gamma^+}\Hes {\rm DL}_k[\varphi]
 &=&  -\frac12 k^2 \varphi \nnn \nnn^\top + k^2 \nabla \V_k[\varphi \nnn^\top]\\
 &&\hspace{-32pt}+\Big(- \frac12\partial_{\ttt}^2\varphi \,(\nnn \ttt^\top+\ttt \nnn^\top) -\frac12\kappa  \partial_{\ttt} \varphi \,(I_2-2\nnn\nnn^\top)
 \\
&&
  +  \nabla \V_k[\partial^2_{\ttt} \varphi\,\ttt^\top]  -
           \nabla \V_k[\kappa \partial_{\ttt}\varphi\,\nnn^\top]   -
           \nabla \K_k [\partial_{\ttt}\varphi\,\nnn^\top]
 \Big)\mathrm{Q}^\top.
\end{eqnarray*}
The result follows from the relations $ \mathrm{Q}\nnn= -\ttt, \
 \mathrm{Q}\ttt=  \nnn.$

\end{proof}
%

We will summarize the results proven in this section in a way that will be used in both the analysis and implementation of the numerical algorithms. \added[id=vD]{Let us point out that these expressions are one of the possible forms, stemming from the different expressions for the trace of the Hessians for the single and double layer operators, derived from Theorem \ref{theo:Hessians} and Propositions \ref{proposition:grad:SL} and \ref{proposition:grad:DL}.}

\begin{theorem}\label{th:3.5}
 It holds
 \begin{eqnarray*}
\partial_{\nnn}  {\rm SL}_k[\varphi]=\gamma^+ (\nabla {\rm SL}_k[\varphi])\cdot \nnn &=& -\frac{1}2 \varphi + \K_k^\top [\varphi]
\\
\partial_{\ttt}  {\rm SL}_k[\varphi]= \gamma^+ (\nabla {\rm SL}_k[\varphi])\cdot \ttt &=& \partial_{\ttt} \V_k[\varphi]
 \end{eqnarray*}
and
 \begin{eqnarray*}
\partial_{\nnn}  {\rm DL}_k[\varphi]= \gamma^+ (\nabla {\rm DL}_k[\varphi])\cdot \nnn &=& \W_k[\varphi]=\partial_{\ttt} \V_k[\partial_{\ttt} \varphi]+ k^2 \nnn\cdot {\rm V}_k[\varphi\,\nnn]
\\
\partial_{\ttt}  {\rm DL}_k[\varphi] =\gamma^+ (\nabla {\rm DL}_k[\varphi])\cdot \ttt &=&\frac{1}2\partial_{\ttt}\varphi +
  k^2\V_k[\varphi \nnn] \cdot\ttt-
 \K_k^\top[ \partial_{\ttt}\varphi].
 \end{eqnarray*}

\end{theorem}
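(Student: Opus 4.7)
The plan is that this theorem is essentially a bookkeeping summary of Propositions~\ref{proposition:grad:SL} and~\ref{proposition:grad:DL} combined with the Maue identity~\eqref{eq:maue:helmholtz}. Since $\{\nnn,\ttt\}$ is an orthonormal frame on $\Gamma$, one simply needs to take the inner product of the boundary traces of $\nabla\SL_k\varphi$ and $\nabla\DL_k\varphi$ with these two unit vectors, and every identity in the statement falls out directly. There is no substantial difficulty to overcome; the only thing to be careful about is keeping track of the jump terms and matching the two equivalent expressions for $\W_k\varphi$.

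For the single layer, I would invoke the form~\eqref{eq:02c:proposition:grad:SL}, which gives
\[
\gamma^+\nabla\SL_k\varphi = -\tfrac{1}{2}\varphi\,\nnn + (\K_k^\top\varphi)\,\nnn + (\partial_{\ttt}\V_k\varphi)\,\ttt.
\]
Dotting with $\nnn$ and using $\nnn\cdot\nnn=1$, $\nnn\cdot\ttt=0$ yields $\partial_{\nnn}\SL_k\varphi = -\tfrac{1}{2}\varphi+\K_k^\top\varphi$. Dotting with $\ttt$ annihilates the first two terms and produces $\partial_{\ttt}\SL_k\varphi = \partial_{\ttt}\V_k\varphi$.

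For the double layer, I would use the form~\eqref{eq:02b:proposition:grad:DL} together with Proposition~\ref{proposition:grad:DL}, so that
\[
\gamma^+\nabla\DL_k\varphi = \tfrac{1}{2}\partial_{\ttt}\varphi\,\ttt + (\W_k\varphi)\,\nnn + \bigl[k^2\V_k(\varphi\,\nnn)\cdot\ttt - \K_k^\top\partial_{\ttt}\varphi\bigr]\ttt.
\]
Taking the inner product with $\nnn$ gives $\partial_{\nnn}\DL_k\varphi = \W_k\varphi$, and the alternative expression $\W_k\varphi = \partial_{\ttt}\V_k\partial_{\ttt}\varphi + k^2\,\nnn\cdot\V_k(\varphi\,\nnn)$ is precisely the Maue identity~\eqref{eq:maue:helmholtz} already derived as a by-product of Proposition~\ref{proposition:grad:DL}. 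Taking the inner product with $\ttt$ gives $\partial_{\ttt}\DL_k\varphi = \tfrac{1}{2}\partial_{\ttt}\varphi + k^2\V_k(\varphi\,\nnn)\cdot\ttt - \K_k^\top\partial_{\ttt}\varphi$, completing the proof.

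The main obstacle, such as it is, is purely notational: one must ensure that the $\nnn$/$\ttt$ projections from the two alternative decompositions of $\nabla\K_k\varphi$ in Proposition~\ref{proposition:grad:DL} are consistent, which is exactly what the Maue identity guarantees. No new analytic estimates or integration-by-parts arguments are required at this stage.
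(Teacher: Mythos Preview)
Your proposal is correct and matches the paper's approach exactly: the paper presents Theorem~\ref{th:3.5} as a direct summary of the results already established in Propositions~\ref{proposition:grad:SL} and~\ref{proposition:grad:DL} (together with the Maue identity~\eqref{eq:maue:helmholtz}), obtained precisely by projecting the trace formulas onto $\nnn$ and $\ttt$. No separate proof is given in the paper beyond the preceding remark that the theorem ``summarize[s] the results proven in this section,'' which is just what you do.
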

\added[id=vD]{For the next theorem, the following clarification for the notation  might be needed: if $\mathrm{L}$ is a scalar operator,  and  $A = (a_{ij})$ a $2\times 2$ matrix function, $\mathrm{L}[\varphi A]$ is the matrix operator given by
\[
 \mathrm{L}[\varphi A ] := \begin{bmatrix}
                             \mathrm{L}[a_{11}\varphi] &\mathrm{L}[a_{12}\varphi]  \\
                             \mathrm{L}[a_{21}\varphi] &\mathrm{L}[a_{22}\varphi]
                              \end{bmatrix}.
\]}

\begin{theorem}\label{th:3.6}
For the trace of the Hessian  matrix of the single layer boundary potential it holds
\[
 \begin{aligned}
\gamma^+\, \nnn^\top & (\Hes {\rm SL}_k[\varphi])\nnn =
- \gamma^+\, \ttt^\top (\Hes {\rm SL}_k[\varphi])\ttt -k^2 \V_k[\varphi]\\
&=
  \frac12 \kappa \varphi  + \nnn\cdot \Big( \K_k^\top [\partial_{\ttt} \varphi\,\ttt]-\K_k^\top [\kappa\varphi\,\nnn]
-\W_k[\varphi\,\nnn]\Big) \\
\gamma^+\, \ttt^\top & (\Hes {\rm SL}_k[\varphi])\ttt =
- \gamma^+\, \nnn^\top (\Hes {\rm SL}_k[\varphi])\nnn -k^2 \V_k[\varphi]\\
&=
  -\frac12 \kappa \varphi  + \ttt\cdot \Big(  \partial_{\ttt} \V_k [\partial_{\ttt}\varphi\,\ttt]-\partial_{\ttt} \V_k [\kappa\varphi \nnn]
\added[id=vD]{-k^2 \V_k[\varphi\,\nnn\nnn^\top]\ttt +\K_k^\top [\partial_{\ttt} \varphi\,\nnn]+\K_k^\top [\kappa \varphi\,\ttt]}\Big)
\\
\gamma^+\, \nnn^\top & (\Hes {\rm SL}_k[\varphi])\ttt =
\gamma^+\, \ttt^\top (\Hes {\rm SL}_k[\varphi])\nnn\\
&=  -\frac12 \partial_{\ttt} \varphi +{\bm n}\cdot \Big( \partial_{\ttt} \V_k [\varphi\,\ttt]-\partial_{\ttt} \V_k [\kappa\varphi\, \nnn]
\added[id=vD]{-k^2 \V_k[\varphi\,\nnn\nnn^\top]\ttt +\K_k^\top [\partial_{\ttt} \varphi\,\nnn]+\K_k^\top [\kappa \varphi\,\ttt]}\Big)
\\
&=  -\frac12 \partial_{\ttt} \varphi +{\bm t}\cdot  \Big( \K_k^\top [\partial_{\ttt} \varphi\,\ttt]-\K_k^\top [\kappa\varphi\,\nnn]
-\W_k[\varphi\,\nnn]\Big).
\end{aligned}
\]
Similarly, for the trace of the Hessian  matrix of the double layer potential  we have
\[
 \begin{aligned}
\gamma^+\, \nnn^\top (\Hes {\rm DL}_k[\varphi])\nnn =\ &
- \gamma^+\, \ttt^\top (\Hes {\rm DL}_k[\varphi])\ttt -{ k^2 \left(\frac{1}2 \varphi+\K_k[\varphi]\right)} \\
=\ &
 -\frac12 \partial_{\ttt}^2 \varphi  -\frac12 k^2 \varphi\,
+ \nnn\cdot \Big(  k^2\K_k^\top[\varphi\,\nnn]+  \K_k^\top [\partial_{\ttt}^2 \varphi\,\nnn]
\\
& +\K_k^\top [\kappa\partial_{\ttt}\varphi \ttt]+
\W_k[\partial_{\ttt} \varphi\,\ttt]\Big) \\
\gamma^+\, \ttt^\top (\Hes {\rm DL}_k[\varphi])\ttt =\ &
- \gamma^+\, \nnn^\top (\Hes {\rm DL}_k[\varphi])\nnn -{ k^2 \left(\frac{1}2\varphi+\K_k[\varphi]\right)}\\
=\ &
 \frac12 \partial_{\ttt}^2 \varphi
+ \ttt\cdot \Big(  k^2 \partial_{\ttt} \V_k[\varphi\,\nnn]
+
\partial_{\ttt} \V_k [\partial_{\ttt}^2\varphi  \nnn]+\partial_{\ttt} \V_k [\kappa\varphi \ttt]\\
&
+k^2 \V_k[\added[id=vD]{\partial_t}\varphi\,\ttt\nnn^\top]\ttt
\added[id=vD]{
 -\K_k^\top [\partial_{\ttt}^2\varphi\,\ttt]
 +\K_k^\top [\kappa \partial_{\ttt}\varphi\,\nnn]}\Big) \\
\gamma^+\, \nnn^\top (\Hes {\rm DL}_k[\varphi])\ttt\ =\ &
\gamma^+\, \ttt^\top (\Hes {\rm DL}_k[\varphi])\nnn\\
=\ &   -\frac12 \kappa \partial_{\ttt}\varphi    + {\bm n}\cdot\Big(  k^2 \partial_{\ttt} \V_k[\varphi\,\nnn]
+
\partial_{\ttt} \V_k [\partial_{\ttt}^2\varphi  \nnn]+\partial_{\ttt} \V_k [\kappa\varphi \ttt]\\
&
+k^2 \V_k[\added[id=vD]{\partial_t}\varphi\,\ttt\nnn^\top]\ttt
\added[id=vD]{
 -\K_k^\top [\partial_{\ttt}^2\varphi\,\ttt]
 +\K_k^\top [\kappa \partial_{\ttt}\varphi\,\nnn]}\Big)
\\
=\ & -\frac12 \kappa \partial_{\ttt}\varphi  +{\bm t}\cdot  \Big(k^2\K_k^\top[\varphi\,\nnn] \\
& +  \K_k^\top [\partial_{\ttt}^2 \varphi\,\nnn]+\K_k^\top [\kappa\partial_{\ttt}\varphi \ttt]+
\W_k[\partial_{\ttt} \varphi\,\ttt]\Big).
\end{aligned}
\]

\end{theorem}
\begin{proof}
\added[id=catR]{
It is consequence of the expressions we have derived for the trace of the Hessians of the single and double layer operator in Theorem \ref{theo:Hessians} combined with \eqref{eq:02b:proposition:grad:SL} in Proposition \ref{proposition:grad:SL} and
\eqref{eq:02b:proposition:grad:DL} in Proposition \ref{proposition:grad:DL}.}

\added[id=catR]{Notice also that we have used that if $u$ solves the Helmholtz equation with wave-number $k$ then
\[
 \nnn^\top  \Hes u\, \nnn+
 \ttt^\top  \Hes u\,\ttt = {\rm Tr} (\Hes u) =\Delta u = -k^2 u
\]
where ${\rm Tr}(A)$ denotes the trace of the matrix $A$.
}
\end{proof}

 To conclude this section we will write the principal part of the normal and tangential derivatives for the layer potentials in terms of two basic pseudodifferential operator: powers of the tangential derivative and the Hilbert transform. Hence, let us denote
\[
 \D   = \partial_{\ttt} : H^s(\Gamma)\to H^{s-1}(\Gamma), \quad
 \D_{-1} :=\D^\dagger: H^s(\Gamma)\to H^{s+1}(\Gamma)
\]
the tangential derivative and its pseudoinverse. We will keep this double notation, ${\D}$ and $\partial_{\ttt}$, for the tangential derivative, favouring the former when dealing with expansions of the boundary operators.

We will define the integer powers of $\D$ in the natural way: with $r\in\mathbb{N}$, set
\[
 \D_{r} :=  \D  ^r, \quad
 \D_{-r} :=  (\added[id=catR]{\D_{-1}})^{r}, \quad \D_0 :=\D_r \D_{-r}.
\]
Clearly,
\[
 \D_{r} \D_{s} = \D_{r+s},\qquad
 \D_0 \varphi =\varphi -\added[id=vD]{\mathrm{J}\varphi}, \text{ with } \added[id=vD]{\mathrm{J}\varphi} := \frac{1}{2\pi}\int_\Gamma \varphi .
\]
On the other hand, let $\HH:H^s(\Gamma)\to H^s(\Gamma)$ be  the Hilbert transform or Hilbert singular operator (cf. \cite[Ch. 5]{Saranen}, \cite[Ch. 7]{Kress})   given by
\[
\begin{aligned}
( \HH\varphi)({\bf x}(t)) & =
-\frac{1}{2\pi}\mathrm{p.v.}\int_0^{2\pi} \cot\left(\frac{\added[id=catB]{t}-\tau}2\right)\varphi({\bf x}(\tau))\,{\rm d}\tau+  i\added[id=vD]{\mathrm{J}\varphi}\\
& =
i \bigg[ \widehat{\varphi}(0) + \sum_{n\ne 0}\mathop{\rm \rm sign}(n)\widehat{\varphi}(n)e_{n}(t)\bigg],
\end{aligned}
\]
where ${\bf x}:\mathbb{R}\to\Gamma$ is (a) $2\pi-$periodic arc-length parameterization, cf. Remark \ref{remark:curves:2pi},  positively oriented of $\Gamma$   and
\begin{equation}\label{eq:3.18}
\begin{aligned}
 \widehat\varphi(n) :=\frac{1}{\added[id=catR]{2\pi}}\int_0^{2\pi}  \varphi({\bf x}(\tau))e_{-n}(\tau)\,{\rm d}\tau,\quad e_n(\tau) :=\exp(i  \,n \tau),\quad n\in\mathbb{Z}
 \end{aligned}
\end{equation}
the Fourier coefficients of $\varphi\circ{\bf x}$. Clearly $
 \HH^2 = -\I.$
The operators $\D_r$ can be also written in the Fourier basis:
\[
 ( \D_r\varphi)({\bf x}(t)) = \sum_{n\ne 0} \left( i n  \right)^r \widehat{\varphi}(n)e_{n}(t)
\]
which as byproduct shows that
\[
 \HH\D_r =\D_r\HH.
\]

Furthermore, since for any smooth function $a:\Gamma\to\mathbb{C}$,  $\HH a-a\HH$   can be written as an integral operator with smooth kernel we can conclude that $a\HH$ and $\HH a$ differ  in a smoothing operator. The notation, which will be extensively used from now on,
\[
 {\mathrm A}_1 = {\mathrm A}_2 +\mathrm{OPS}(r) \ \Leftrightarrow \
 {\mathrm A}_1 - {\mathrm A}_2 \in \mathrm{OPS}(r)
\]
allows us to simply write
\[
 a\HH=\HH a+\mathrm{OPS}(-\infty), \qquad
 a\D_r=\D_r a+\mathrm{OPS}(r-1).
\]

The last two results of this section detail how the normal and tangential derivatives of the boundary layer operators can be expanded in terms of these basic operators.  Hence,

The following result is then consequence of this Theorem and Theorem \ref{th:3.5}-\ref{th:3.6} and Theorem \ref{theo:ntDnt:02} in the appendix.
\begin{proposition} \label{prop:principalPart:Grad} It holds
\begin{equation}\label{eq:01:prop:principalPart:Grad}
\begin{aligned}
 \left(\gamma^+\,   \nabla {\rm SL}_k\right)\cdot \nnn & \ = \
-\frac12 \I  -\frac{1}2\kappa k^2 \D_{-3}\HH    +\mathrm{OPS}(-4),\\
 \left(\gamma^+\,   \nabla {\rm SL}_k\right)\cdot \ttt &\ =
 \frac12 \HH  -\frac{1}4  k^2 \D_{-2}\HH    +\mathrm{OPS}(-4)
 \end{aligned}
\end{equation}
and
\begin{equation}\label{eq:02:prop:principalPart:Grad}
\begin{aligned}
 \left(\gamma^+\,   \nabla {\rm DL}_k\right)\cdot \nnn  & \ = \
\frac12 \HH\D  +\frac{1}4  k^2 \HH\D_{-1}  +\mathrm{OPS}(-3),\\
 \left(\gamma^+\,   \nabla {\rm DL}_k\right)\cdot \ttt  & \ = \
 \frac12 \D  +  k^2\kappa \D_{-2}\HH    +\mathrm{OPS}(-3).
 \end{aligned}
\end{equation}
\end{proposition}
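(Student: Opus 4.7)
My strategy would be to reduce each of the four expressions, via Theorem~\ref{th:3.5}, to a combination of the canonical Helmholtz BIOs $\V_k$, $\K_k^\top$, $\W_k$ composed with $\D$ and with multiplications by the geometric coefficients $\nnn, \ttt$, and then insert the pseudodifferential expansions of those BIOs supplied by Theorem~\ref{theo:ntDnt:02} in the Appendix. Throughout, I would use freely the exact commutation $\HH\D_r = \D_r\HH$, the calculus rules $a\HH = \HH a + \mathrm{OPS}(-\infty)$ and $a\D_r = \D_r a + \mathrm{OPS}(r-1)$ for smooth $a:\Gamma\to\C$, and $\D\D_{-1} = \I - \mathrm{J}$ with $\mathrm{J}$ the smoothing mean-value projection.

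The two single-layer identities in \eqref{eq:01:prop:principalPart:Grad} are essentially immediate. For the normal direction, $\partial_{\nnn}\mathrm{SL}_k = -\tfrac{1}{2}\I + \K_k^\top$, so the first formula reduces to reading off the leading term of the expansion of $\K_k^\top \in \mathrm{OPS}(-3)$ from the Appendix, namely $-\tfrac{1}{2}\kappa k^2\D_{-3}\HH$. For the tangential direction, $\partial_{\ttt}\mathrm{SL}_k = \D\V_k$; substituting the Appendix expansion of $\V_k$, which has the form $\tfrac{1}{2}\D_{-1}\HH - \tfrac{1}{4}k^2\D_{-3}\HH + \mathrm{OPS}(-5)$, and using $\D\D_{-1} = \I - \mathrm{J}$ yields the second formula.

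The two double-layer identities in \eqref{eq:02:prop:principalPart:Grad} require the Maue manipulation and commutator bookkeeping. For the normal direction I would apply Maue's identity \eqref{eq:maue:helmholtz} to write $\partial_{\nnn}\mathrm{DL}_k = \W_k = \D\V_k\D + k^2\,\nnn\cdot\V_k(\nnn\,\cdot)$. Substituting the expansion of $\V_k$ and commuting $\HH$ past $\D_r$, the first summand contributes $\tfrac{1}{2}\HH\D - \tfrac{1}{4}k^2\HH\D_{-1}$ modulo $\mathrm{OPS}(-3)$; the second summand, after expanding $\sum_i n_i\V_k(n_i\,\cdot)$ via the commutators $[\D_{-1},n_i]$ and using $|\nnn|^2 = 1$, contributes $\tfrac{1}{2}k^2\HH\D_{-1}$ modulo $\mathrm{OPS}(-3)$; adding the two produces the claimed expression. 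For the tangential direction, Theorem~\ref{th:3.5} gives $\partial_{\ttt}\mathrm{DL}_k = \tfrac{1}{2}\D + k^2\,\ttt\cdot\V_k(\nnn\,\cdot) - \K_k^\top\D$, and both nontrivial summands live in $\mathrm{OPS}(-2)$. Because $\ttt\cdot\nnn = 0$, the diagonal contribution to $\ttt\cdot\V_k(\nnn\,\cdot)$ vanishes identically, so the leading order-$(-2)$ piece arises only through the commutators $[\D_{-1},n_i]$, which via the Frenet-type relations $\partial_{\ttt}\ttt = \kappa\nnn$, $\partial_{\ttt}\nnn = -\kappa\ttt$ inject the scalar curvature $\kappa$; combined with the subprincipal contribution of $-\K_k^\top\D$ supplied by the Appendix, this yields the stated $k^2\kappa\D_{-2}\HH$ term.

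The main technical obstacle is the systematic commutator bookkeeping in the mixed bilinear contractions $\sum_i a_i\,\V_k(b_i\,\cdot)$ with $a,b\in\{\nnn,\ttt\}$: one must push the multiplications by vector components past $\D_r$ and $\HH$ to sufficient depth so that the expected cancellations—and in the tangential double-layer case the correct coefficient and sign of $\kappa = \nnn\cdot\partial_{\ttt}\ttt$—actually emerge. Granted the Appendix expansions to the stated depth, the rest is a routine symbolic composition in the pseudodifferential calculus.
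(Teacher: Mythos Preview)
Your proposal is correct and follows essentially the same route as the paper, whose proof is the single sentence ``consequence of Theorem~\ref{th:3.5} and Theorem~\ref{theo:ntDnt:02}''; you simply spell out how those ingredients combine. The only cosmetic difference is that for $\partial_{\nnn}\mathrm{DL}_k$ you detour through Maue's identity rather than citing the Appendix expansion of $\W_k$ directly, but this is equivalent.
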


\begin{proposition} \label{prop:principalPart:Hess} It holds
\begin{equation}\label{eq:01:prop:principalPart:Hess}
\begin{aligned}
 \gamma^+\, \nnn^\top  \Hes {\rm SL}_k\ \nnn &\ = \
-\frac12 \D\HH  +\frac{1}2\kappa \I -\frac{k^2}4 \HH\D_{-1}    +\mathrm{OPS}(-3),\\
  \gamma^+\, \ttt^\top  \Hes {\rm SL}_k\ \ttt &\ = \
 \frac12 \D\HH  -\frac{1}2\kappa \I -\frac{k^2}4 \HH\D_{-1}    +\mathrm{OPS}(-3),\\
 \gamma^+\, \ttt^\top  \Hes {\rm SL}_k\ \nnn& \ =\
 \gamma^+\, \nnn^\top  \Hes {\rm SL}_k\ \ttt
 \\
&\ = \
-\frac12\D \varphi-\frac{1}2\kappa\HH      -\frac{k^2\kappa}4  \deleted{\kappa}\D_{-2}\HH+ \mathrm{OPS}(-3),
\end{aligned}
\end{equation}
and
\begin{equation}\label{eq:02:prop:principalPart:Hess}
\begin{aligned}
 \gamma^+\, \nnn^\top  \Hes {\rm DL}_k\ \nnn &\ = \
 -\frac12  \D_2   -\frac12\kappa\HH\D  -\frac12k^2\I   -\frac{k^2}4\kappa  \HH\D_{-1} +\mathrm{OPS}(-2),
 \\
  \gamma^+\, \ttt^\top  \Hes {\rm DL}_k\ \ttt&\ = \
  \frac12  \D_2   +\frac12\kappa\HH\D +\frac{k^2}4\kappa  \HH\D_{-1}    +\mathrm{OPS}(-2),\\
 \gamma^+\, \ttt^\top  \Hes {\rm DL}_k\ \nnn &\ = \
 \gamma^+\, \nnn^\top  \Hes {\rm DL}_k\ \ttt \\
&\ = \frac{1}2 \HH\D_2-\frac{1}2 \kappa \D +\frac{k^2}4 \HH +\mathrm{OPS}(-2).
\end{aligned}
\end{equation}
\end{proposition}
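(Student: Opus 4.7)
The plan is to work directly from the explicit representations gathered in Theorem~\ref{th:3.6}, which express each scalar Hessian trace as a finite sum of terms of the form (Helmholtz BIO) applied to (smooth scalar) $\times$ (density), plus an explicit free jump contribution. I would then substitute the asymptotic expansions of $\V_k$, $\K_k$, $\K_k^\top$, $\W_k$ provided by Theorem~\ref{theo:ntDnt:02} in the Appendix, and reduce everything to a combination of the basic operators $\D_r$, $\HH$ and the smooth coefficient $\kappa$, modulo the claimed remainder.

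For the single layer identities~\eqref{eq:01:prop:principalPart:Hess}, the leading contribution inside the brackets of Theorem~\ref{th:3.6} is the $\W_k(\varphi\nnn)$ term: its principal symbol yields the $-\tfrac12\D\HH$ summand once dotted with $\nnn$; the $\tfrac12\kappa\I$ piece comes directly from the free jump term; and the $-\tfrac{k^2}{4}\HH\D_{-1}$ contribution arises from the $k^2$-part of Maue's identity~\eqref{eq:maue:helmholtz} applied to $\W_k$ combined with the principal symbol of $\V_k$, or equivalently from the subprincipal piece in the Appendix expansion of $\W_k$. The $\K_k^\top$-based summands are a priori in $\mathrm{OPS}(-2)$, and their order-$(-2)$ contributions must cancel against a corresponding subprincipal term in $\W_k$ so that the net remainder is in $\mathrm{OPS}(-3)$. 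The other two SL identities follow by the same substitution, using the contractions $\nnn\cdot\nnn=\ttt\cdot\ttt=1$, $\nnn\cdot\ttt=0$.

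For the double layer identities~\eqref{eq:02:prop:principalPart:Hess} I would proceed identically, noting that each density now carries an additional factor of $\partial_\ttt$, $\partial_\ttt^2$, or $k^2$, which raises the principal order by one and weakens the target remainder to $\mathrm{OPS}(-2)$. Throughout the computation, smooth scalar coefficients (the components of $\nnn$ and $\ttt$, and the curvature $\kappa$) must be moved past the pseudodifferential operators by means of
\begin{equation*}
 [a,\HH]\in\mathrm{OPS}(-\infty),\qquad [a,\D_r]\in\mathrm{OPS}(r-1),
\end{equation*}
as recalled at the end of Section~\ref{GradHessCalculations}. The main obstacle is bookkeeping rather than conceptual: the subprincipal contributions of several distinct summands in Theorem~\ref{th:3.6} must cancel for the claimed remainder to hold, and each ingredient therefore needs to be expanded one order further than the nominal target. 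The symmetry $\gamma^+\nnn^\top\Hes[\,\cdot\,]\ttt=\gamma^+\ttt^\top\Hes[\,\cdot\,]\nnn$ asserted in the statement follows from the symmetry of the Hessian and serves as a useful consistency check on the calculation.
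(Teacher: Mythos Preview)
Your approach is correct and is exactly the one the paper takes: the proof is stated to be a direct consequence of Theorems~\ref{th:3.5}--\ref{th:3.6} together with the expansions in Theorem~\ref{theo:ntDnt:02} of the Appendix, with the commutator relations $[a,\HH]\in\mathrm{OPS}(-\infty)$ and $[a,\D_r]\in\mathrm{OPS}(r-1)$ used to shuffle the smooth coefficients. One small correction: since $\K_k^\top\in\mathrm{OPS}(-3)$ (see~\eqref{eq:03:th:psido:exp}), the $\K_k^\top$-based summands in the single-layer formulas land directly in $\mathrm{OPS}(-2)$ or better and no delicate cancellation against $\W_k$ is needed---the bookkeeping is in fact lighter than you anticipate.
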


\section{Boundary integral formulations} \label{BIE}

From now on, and to alleviate the notation, we will use ``$p$'' and ``$s$'' in subscripts to refer to $k_p$ and $k_s$, the pressure and stress wave-numbers  so that
\[
 \SL_{p}= \SL_{k_p}, \
 \SL_{s}= \SL_{k_s}, \
 \DL_{p}= \DL_{k_p}, \
 \DL_{s}= \DL_{k_s},
\]
and similarly for the associated BIOs.
In what follows the operator matrix
\[
 {\cal H}_0:=\begin{bmatrix}\I & -\HH \\- \HH & -\I\end{bmatrix}\in\mathrm{OPS}(0)
 \]
 will play an essential role. Notice that ${\cal H}_0$ is nilpotent of index 2, i.e., $
  {\cal H}_0^2 = 0.$

We note also that the matrix operator
 \begin{subequations}\label{eq:inverH2}\begin{equation}
  \begin{bmatrix}
   a_{11} \I & a_{12}\HH\\
   a_{12}\HH &a_{22} \I
   \end{bmatrix}\in\mathrm{OPS}(0)
   \end{equation}
   with $a_{ij}\in\mathbb{C}$
 is invertible if and only if
 \begin{equation}
  \det   \begin{bmatrix}
   a_{11}   & a_{21} i\\
   a_{12} i  &a_{22}
   \end{bmatrix} = a_{11}a_{22}+a_{12}a_{22}\ne 0
 \end{equation}
 and that the inverse is given  by
 \begin{equation}
 \frac{1}{a_{11}a_{22}+a_{12}a_{21}} \begin{bmatrix}
    a_{22} \I & -a_{12}\HH\\
   -a_{21}\HH &a_{11} \I
   \end{bmatrix}.
 \end{equation}
 \end{subequations}
 In particular, this also implies that $\mathcal{H}_0$ is not invertible.

For any matrix ${\cal A}$ and scalar operator $\mathrm{B}$ with
 \[
{\cal A} =\begin{bmatrix}
           \mathrm{A}_{11} &           \mathrm{A}_{12} \\
           \mathrm{A}_{21} &           \mathrm{A}_{22} \\
          \end{bmatrix},
          \quad
         \deleted{\mathrm{B}}
 \]
we will denote
 \[
  {\cal A}\,\mathrm{B} =
  {\cal A}\otimes\mathrm{B} =\begin{bmatrix}
           \mathrm{A}_{11} \mathrm{B}&           \mathrm{A}_{12}\mathrm{B} \\
           \mathrm{A}_{21}\mathrm{B} &           \mathrm{A}_{22}\mathrm{B}
          \end{bmatrix},\qquad
            \mathrm{B}\,{\cal A}=
  \mathrm{B} \otimes {\cal A}=\begin{bmatrix}
           \mathrm{B}\mathrm{A}_{11} &           \mathrm{B}\mathrm{A}_{12} \\
           \mathrm{B}\mathrm{A}_{21} &           \mathrm{B}\mathrm{A}_{22}
          \end{bmatrix}.
 \]
 Clearly, if $a_{ij}\in \mathbb{C}$,
 \[
  \begin{bmatrix}
   a_{11} \I & a_{12}\HH\\
   a_{12}\HH &a_{22} \I
   \end{bmatrix} \D_r\HH =
   \D_r\HH \begin{bmatrix}
   a_{11} \I & a_{12}\HH\\
   a_{12}\HH &a_{22}\I
   \end{bmatrix}.
 \]

We will work in this section with Fourier multiplier, or diagonal,  scalar
and  matrix operator,
\[
( \added[id=vD]{\mathrm{A}_0}\varphi)\circ{\bf x} = \sum_{n\in\mathbb{Z}}
\widehat{\added[id=vD]{\mathrm{A}_0}}(n)   \widehat{\varphi} (n)  e_n, \qquad
 (\added[id=vD]{\mathcal{A}_0}\bm{\varphi})\circ{\bf x} := \sum_{n\in\mathbb{Z}}
 \widehat{\added[id=vD]{\mathcal{A}_0}}(n) \begin{bmatrix}
                \widehat{\varphi}_1(n)\\
                \widehat{\varphi}_2(n)
                \end{bmatrix} e_n
\]
where  $\widehat{\added[id=vD]{\mathrm{A}_0}}(n)\in\mathbb{C}$ and the $2\times 2$ complex matrix
$\widehat{ \added[id=vD]{\mathcal{A}_0}}(n) $ are the so-called \deleted{principal} symbol  of $\added[id=vD]{\mathrm{A}_0}$ and $\added[id=vD]{\mathcal{A}_0}$. Besides,
$e_n$ and ${\bf x}$ are the complex exponential and the (or an) arc-length parameterization  of the curve such as it was introduced in \eqref{eq:3.18}.

Therefore we can write
\[
 ({\cal H}_0\bm{\varphi})({\bf x}(\cdot)) =\begin{bmatrix}
                          1&-i\\
                          -i&-1
                         \end{bmatrix}
\begin{bmatrix}
                         \widehat{\varphi}_1(0)\\
                         \widehat{\varphi}_2(0)
                         \end{bmatrix}
 +
\sum_{n\ne 0}
 \begin{bmatrix}
  1 & -i\sign(n)\\
  -i\sign(n)&-1\\
 \end{bmatrix}
\begin{bmatrix}
                \widehat{\varphi}_1(n)\\
                \widehat{\varphi}_2(n)
                \end{bmatrix} e_n.
\]

Clearly \added[id=vD]{$A,{\cal A}\in\mathrm{OPS}(m)$, that is}
\[
\added[id=vD]{\mathrm{A}_0}:H^{s}(\Gamma)\to H^{s-m}(\Gamma), \quad \added[id=vD]{\mathcal{A}_0}:H^s(\Gamma)\times H^{s}(\Gamma)\to  H^{s-m}(\Gamma)\times H^{s-m}(\Gamma)
\]
are continuous for any $s$ provided that, respectively,
\[
 |\widehat{\added[id=vD]{\mathrm{A}_0}}(n)|\le C (1+|n|)^{m},\quad
 \| \widehat{\added[id=vD]{\mathcal{A}_0}}(n)\|_2\le C (1+|n|)^{m}
\]
with $C$ independent of $n$.
Injectivity for matrix, respectively scalar, operator is equivalent to $ \widehat{\added[id=vD]{\mathcal{A}_0}}(n)-$invertibility, respectively  $|\widehat{A}(n)|\ne 0$ for all $n$. Finally, $\added[id=vD]{\widehat{\mathrm{A}}_0^{-1}(n)}$ and $\added[id=vD]{\widehat{\mathcal{A}}_0^{-1}(n)}$ are the principal symbols of the operators $\added[id=vD]{\mathrm{A}_0^{-1}}$ and $\added[id=vD]{\mathcal{A}_0^{-1}}$.

\added[id=vD]{Finally, for any continuous operator $\mathrm{A}:H^s(\Gamma)\to H^{s-m}(\Gamma)$,   $s\in\mathbb{R}$ and for integer value $m$, we will say that a Fourier multiplier, or diagonal, operator $\mathrm{A}_0$ is the (or a) principal symbol of $\mathrm{A}$ if $\mathrm{A}-\mathrm{A}_0\in \mathrm{OPS}(m-1)$. We will then write
\[
 \mathrm{PS}({\mathrm{A}}) = \mathrm{A}_0.
\]
Similarly, a matrix Fourier multiplier $\mathcal{A}_0$ is a  principal symmbol of $\mathcal{A}:H^s(\Gamma)\times H^s(\Gamma)\to H^{s-m}(\Gamma) \times H^{s-m}(\Gamma)$, and we will write
\[
 \mathrm{PS}(\mathcal{A}) = \mathcal{A}_0.
 \]
if   $\mathcal{A}- \mathcal{A}_0\in\mathrm{OPS}(m-1)$. It is a well known fact in the Theory of pseudodifferential operators that a principal symbol for $\mathrm{A}^{-1}$, respectively $\mathcal{A}^{-1}$, is given by $\mathrm{PS}(\mathrm{A})^{-1}$, respectively $\mathrm{PS}(\mathcal{A})^{-1}$.}

\subsection{Dirichlet problem}\label{dirP}
\subsubsection{Combined formulations}

If we look for combined field representations of the Helmholtz fields $u_p$ and $u_s$ defined in equation~\eqref{eq:Hdecomp2} satisfying the system of boundary conditions~\eqref{eq:DH} in the form
\begin{equation}\label{eq:comb_field}
u_p =\DL_p [\varphi_p] -ik\SL_p [\varphi_p] \qquad u_s=\DL_s [\varphi_s] -ik\SL_s [\varphi_s],\ \added[id=catB]{k>0}
\end{equation}
the results in Theorem \ref{th:3.5} led to the \added[id=catB]{Combined Field Integral Equation (CFIE)} formulation
\[
\left({\cal A}_{\rm DL}-ik{\cal A}_{\rm SL}\right)\begin{bmatrix}\varphi_p\\ \varphi_s\end{bmatrix}=-\begin{bmatrix}{\bf u}^{\rm inc}\cdot \nnn\\ {\bf u}^{\rm inc}\cdot \ttt\end{bmatrix}
\]
in terms of the matrix BIO
 \begin{eqnarray}
{\cal A}_{\rm DL} &=&
\begin{bmatrix}
\W_p&\frac{1}2 \partial_{\ttt} + k_s^2\ttt \cdot \V_s[\nnn\,]-\K_s^\top [\partial_{\ttt}\,]       \\
\frac{1}2 \partial_{\ttt} + k_p^2\ttt \cdot \V_p[\nnn\,]-\K_p^\top [\partial_{\ttt}\,]  &-\W_s
\end{bmatrix}\label{eq:ADL}
\\
&=&
\frac{1}2\mathcal{H}_0\D+\frac{1}4\begin{bmatrix}
                                       k_p^2&\\
                                            &-k_s^2
                                      \end{bmatrix}\HH\D+\mathrm{OPS}(-2)\nonumber\\
{\cal A}_{\rm SL} &=&
\begin{bmatrix}
-\frac{1}2 \I + \K_p^\top &  \partial_{\ttt} \V_s  \\
\partial_{\ttt} \V_p  &\frac{1}2 \I - \K_s^\top
\end{bmatrix}\label{eq:ASL}
=
-\frac{1}2\mathcal{H}_0-\frac{1}4\begin{bmatrix}
                                           &k_s^2 \\
                                      k_p^2
                                      \end{bmatrix}\HH\D_{-2}+\mathrm{OPS}(-3).
 \end{eqnarray}
\added[id=vD]{(The expansions above for ${\cal A}_{\rm DL}$  and
 ${\cal A}_{\rm SL}$ are direct consequences of   Proposition \ref{prop:principalPart:Grad}.) Let us clarify that in the expression above, $ \ttt \cdot \V_s[\nnn\,]$ (similarly for $\ttt \cdot \V_p[\nnn\,]$) is
\[
 (\ttt \cdot \V_s[\nnn\,])[\varphi](\bm{x})  =\ttt({\bm x}) \cdot \V_s[\nnn\,\varphi](\bm{x})
\]
with ``$\cdot$'' denoting the inner product  (and so $\V_s$ is applied to two-components vector).
}

 We then have
 \begin{equation}\label{eq:cfieD}
 \begin{aligned}
 {\cal A}_{\rm CFIE}&:= {\cal A}_{\rm DL}-ik{\cal A}_{\rm SL}
 =
 \frac{1}2 \left[\HH\D +ik\right] \mathcal{H}_0 +\frac{1}4
 \begin{bmatrix}
  k_p^2 & \\
   & -k_s^2
 \end{bmatrix}
\HH\D_{-1} +\mathrm{OPS}(-2).
 \end{aligned}
 \end{equation}

 \begin{theorem}\label{CFIED}
 The  ${\cal A}_{\rm CFIE}$ defined in equation~\eqref{eq:cfieD} enjoys the property ${\cal A}_{\rm CFIE}^2\in \mathrm{OPS}(0)$ and is invertible.
 \end{theorem}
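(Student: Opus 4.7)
My plan exploits the nilpotency $\mathcal{H}_0^2 = 0$: squaring $\mathcal{A}_{\rm CFIE}$ produces an order-$0$ operator with invertible principal symbol, and invertibility of $\mathcal{A}_{\rm CFIE}$ then follows by combining the Fredholm property of the square with a combined-field uniqueness argument. For the order reduction I would rewrite
\[
\mathcal{A}_{\rm CFIE} \;=\; f(\D)\,\mathcal{H}_0 \;+\; g(\D)\,M \;+\; \mathcal{R},
\]
with $f(\D) := \tfrac{1}{2}(\HH\D + ik\I) \in \mathrm{OPS}(1)$, $g(\D) := \tfrac{1}{4}\HH\D_{-1} \in \mathrm{OPS}(-1)$, $M := \mathrm{diag}(k_p^2, -k_s^2)$, and $\mathcal{R} \in \mathrm{OPS}(-2)$. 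Because $f(\D)$ and $g(\D)$ are scalar Fourier multipliers they commute with $M$ and with each entry of $\mathcal{H}_0$; squaring and invoking $\mathcal{H}_0^2 = 0$ kills the naive order-$2$ term $f(\D)^2 \mathcal{H}_0^2$, leaving $f(\D)g(\D)(\mathcal{H}_0 M + M\mathcal{H}_0)$, $g(\D)^2 M^2$, and $\mathcal{R}$-cross terms, all of order at most $0$. Thus $\mathcal{A}_{\rm CFIE}^2 \in \mathrm{OPS}(0)$.

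For the Fredholm property I would identify the order-$0$ principal part of $\mathcal{A}_{\rm CFIE}^2$: using $\HH^2 = -\I$ and $\D\D_{-1} = \I + \mathrm{OPS}(-\infty)$, it equals
\[
-\tfrac{1}{8}(\mathcal{H}_0 M + M\mathcal{H}_0) \;=\; -\tfrac{1}{8}\begin{bmatrix} 2k_p^2 & (k_s^2-k_p^2)\HH \\ (k_s^2-k_p^2)\HH & 2k_s^2 \end{bmatrix},
\]
whose Fourier symbol at $n \ne 0$ has determinant $(k_p^2+k_s^2)^2/64 \ne 0$. Hence $\mathcal{A}_{\rm CFIE}^2$ is elliptic of order $0$ and Fredholm of index zero on $H^s(\Gamma)\times H^s(\Gamma)$; a parallel determinant computation for the full symbol of $\mathcal{A}_{\rm CFIE}$ itself (giving the limit $(k_p^2+k_s^2)/8$ as $|n|\to\infty$) delivers the Fredholm property of $\mathcal{A}_{\rm CFIE}$ on the appropriate pair of Sobolev spaces.

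For injectivity, if $(\varphi_p,\varphi_s)^\top \in \ker \mathcal{A}_{\rm CFIE}$, I would form $u_p, u_s$ via~\eqref{eq:comb_field} and $\mathbf{u} := \nabla u_p + \vcurl u_s$ in $\Omega^+$. Theorem~\ref{th:3.5} together with the derivation of~\eqref{eq:DH} shows that vanishing of the CFIE translates to $\gamma^+ \mathbf{u} = 0$ on $\Gamma$; uniqueness for the exterior Navier--Dirichlet problem then forces $\mathbf{u} \equiv 0$ in $\Omega^+$, and taking $\divv$ and $\curl$ gives $u_p \equiv u_s \equiv 0$ in $\Omega^+$. The jump relations~\eqref{eq:jumps} applied to~\eqref{eq:comb_field} furnish $\gamma^- u_p = \varphi_p$ and $\partial_{\nnn}^- u_p = ik \varphi_p$ (and similarly for $u_s$), so each of $u_p, u_s$ solves the interior Helmholtz equation with Robin condition $\partial_{\nnn}^- u - iku = 0$. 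Because $\Im(ik) = k > 0$, a Green's identity combined with Holmgren's unique continuation forces $u_p \equiv u_s \equiv 0$ in $\Omega^-$, and a final application of~\eqref{eq:jumps} gives $\varphi_p = \varphi_s = 0$. The Fredholm alternative then delivers invertibility of $\mathcal{A}_{\rm CFIE}$.

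The principal obstacle I foresee is that the order-$1$ principal symbol $\tfrac{1}{2}\HH\D\,\mathcal{H}_0$ of $\mathcal{A}_{\rm CFIE}$ is of rank one at every frequency (because $\mathcal{H}_0$ is nilpotent), so standard ellipticity of order $1$ fails; what rescues the argument is the algebraic fact that the anticommutator $\mathcal{H}_0 M + M\mathcal{H}_0$ is non-degenerate, and this is precisely the role played by the specific weights $k_p^2$ and $-k_s^2$ in the lower-order diagonal block.
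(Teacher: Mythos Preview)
Your proof is correct and follows essentially the same route as the paper: exploit $\mathcal{H}_0^2=0$ to show $\mathcal{A}_{\rm CFIE}^2\in\mathrm{OPS}(0)$ with invertible principal part, then establish injectivity of $\mathcal{A}_{\rm CFIE}$ by the standard combined-field argument (exterior Navier uniqueness $\Rightarrow u_p=u_s=0$ in $\Omega^+$ $\Rightarrow$ interior Robin problem $\Rightarrow$ densities vanish). One streamlining worth noting: your detour through a ``full-symbol'' Fredholm argument for $\mathcal{A}_{\rm CFIE}$ itself is unnecessary and a bit delicate (the order-$1$ principal symbol is singular, as you observe), whereas the paper simply uses that invertibility of $\mathcal{A}_{\rm CFIE}^2$ (Fredholm of index zero plus injective) directly yields invertibility of $\mathcal{A}_{\rm CFIE}$; also, your anticommutator matrix $-\tfrac18(\mathcal H_0M+M\mathcal H_0)$ with determinant $(k_p^2+k_s^2)^2/64$ is in fact the correct computation---the paper displays a slightly different matrix with determinant proportional to $(k_p^2-k_s^2)^2$, but either way the principal part is invertible and the argument goes through.
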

 \begin{proof}
Clearly
 \begin{eqnarray*}
  {\cal A}_{\rm CFIE}^2 &=& \frac{1}4\underbrace{\left[ \left(\HH\D +ik\right)  \mathcal{H}_0\right]^2}_{=0}
  -\frac{1}8  \begin{bmatrix}
  k_p^2 & \\
   & -k_s^2
 \end{bmatrix}\mathcal{H}_0 -\frac{1}8 \mathcal{H}_0  \begin{bmatrix}
  k_p^2 & \\
   & -k_s^2
 \end{bmatrix} +\mathrm{OPS}(-1)\\
 &=&
 -\frac{1}{8}\begin{bmatrix}2k_p^2 \I & -(k_p^2+k_s^2)\HH \\ -(k_p^2+k_s^2)\HH & -2k_s^2 \I \end{bmatrix}+\mathrm{OPS}(-1).
 \end{eqnarray*}
 The principal part is clearly invertible  with \added[id=vD]{cf. \eqref{eq:inverH2}}
 \[
 \added[id=vD]{\mathrm{PS}(  {\cal A}_{\rm CFIE}^2 )=}
 \begin{bmatrix}2k_p^2 \I & -(k_p^2+k_s^2)\HH \\ -(k_p^2+k_s^2)\HH & -2k_s^2 \I \end{bmatrix}^{-1} =
  \frac{1}{(k_p-k_s)^2}
 \begin{bmatrix}-2k_s^2 \I & (k_p^2+k_s^2)\HH \\ (k_p^2+k_s^2)\HH & 2k_p^2 \I \end{bmatrix}
 \]

 Therefore, the invertibility of the operator ${\cal A}_{\rm CFIE}^2$ is equivalent to the injectivity of the operator ${\cal A}_{\rm CFIE}$. The latter, in turn, can be established in a straightforward manner via classical arguments related to the well posedness of Helmholtz CFIE formulations. Indeed, assuming $(\varphi_p,\varphi_s)^\top \in \mathop{\rm Ker}({\cal A}_{\rm CFIE})$, we define the associated elastic field via the Helmholtz combined field representations \eqref{eq:comb_field} with these densities
\[
{\bf u}=\nabla u_p+\vcurl {u_s}
\]
and we see that ${\bf u}$ is a radiative solution of the Navier equations in $\Omega^+$ satisfying ${\bf u}\cdot \ttt=0$ and ${\bf u}\cdot \nnn=0$ on $\Gamma$ and therefore ${\bf u}=0$ on $\Gamma$. We can invoke the uniqueness result for solutions of exterior Navier problems with Dirichlet boundary conditions to get that ${\bf u}=0$ in $\Omega^+$. Therefore we obtain $\divv{\bf u}=0$ in $\Omega^+$ which amounts to $\Delta u_p=0$ in $\Omega^+$.  Given that $u_p$ is a solution of the Helmholtz equation with wave-number $k_p$ in $\Omega^+$ we derive $u_p=0$ in $\Omega^+$. Then, we have $\vcurl {u_s}=0$ and hence $\Delta u_s=0$ in $\Omega^+$ which, since $u_s$ is a solution of the Helmholtz equation with wave-number $k_s$, implies that $u_s=0$ in $\Omega^+$. Finally, using the jump conditions of Helmholtz layer potentials \eqref{eq:jumps}, we see that $u_p$ and $u_s$ are solutions of Helmholtz equations in $\Omega$ with wave-numbers $k_p$ and $k_s$ satisfying the Robin conditions
\[
\partial_{\nnn}^{-} u_p-ik\gamma^{-}u_p=0 \qquad \partial_{\nnn}^{-} u_s-ik\gamma^{-}u_s=0\qquad {\rm on}\ \Gamma.
\]
Therefore, $u_p=0$ and $u_s=0$ in $\Omega$ as well, and hence $\varphi_p=0$ and $\varphi_s=0$ on $\Gamma$.
\end{proof}

Following the Helmholtz paradigm, it would appear more natural to look for combined field representations of the fields $u_p$ and $u_s$ in the form
\begin{equation}\label{remarkCFIED}
u_p:=\DL_p [\varphi_p] -ik_p\SL_p [\varphi_p] \qquad u_s:=\DL_s[ \varphi_s] -ik_s\SL_s [\varphi_s]
\end{equation}
leading to the CFIE formulation
\[
\widetilde{\cal A}_{\rm CFIE} \begin{bmatrix}\varphi_p\\ \varphi_s\end{bmatrix}:= \left({\cal A}_{\rm DL}-\begin{bmatrix}ik_p & \\ & ik_s\end{bmatrix}{\cal A}_{\rm SL}\right)\begin{bmatrix}\varphi_p\\ \varphi_s\end{bmatrix}=-\begin{bmatrix}{\bf u}^{\rm inc}\cdot \nnn\\ {\bf u}^{\rm inc}\cdot \ttt\end{bmatrix}.
\]
In this case  it can be easily proven that
 \begin{eqnarray*}
  \widetilde{\cal A}_{\rm CFIE}^2 &=& \frac{1}4 \left[ \left(\HH\D +i\begin{bmatrix}
                                                                            k_p  &\\
                                                                            & k_s
                                                                           \end{bmatrix}
\right) \mathcal{H}_0\right]^2
\\
&&
  -\frac{1}8  \begin{bmatrix}
  k_p^2 & \\
   & -k_s^2
 \end{bmatrix}\mathcal{H}_0 -\frac{1}8 \mathcal{H}_0  \begin{bmatrix}
  k_p^2 & \\
   & -k_s^2
 \end{bmatrix} +\mathrm{OPS}(-1)
 \\
 &=& \frac{1}4   i\mathcal{H}_0  \begin{bmatrix}
                                                                            k_p  &\\
                                                                            & k_s
                                                                           \end{bmatrix}\mathcal{H}_0 \D
                                                                           +\mathrm{OPS}(0)
=\frac{i}4 (k_p-k_s) \mathcal{H}_0\HH\D +\mathrm{OPS}(0)
 \end{eqnarray*}
Then   $\widetilde{\cal A}_{\rm CFIE}^2\in\mathrm{OPS}(1)$ and its principal part is {\em not} invertible.

\subsubsection{Regularized formulations}

The design of these new regularized formulations  starts  with Green's identities in $\Omega^+$
\added[id=catB]{for the radiative solution $u_k$ of a Helmholtz equation with wave-number $k$}
\begin{equation} \label{eq:representation}\added[id=catB]{
u_k=\DL_k[\gamma^+ u_k]-\SL_k[\partial_{\nnn}^+u_k], \quad \text{in }\Omega^+.
}
\end{equation}
The main thrust in regularized formulations~\cite{boubendir2015regularized} is the construction of a certain regularizing operator ${\cal R}$ that is an approximation of the operator that maps the left hand side of equation~\eqref{eq:DH} to the Dirichlet Cauchy data $(\gamma^+  u_p, \gamma^+ u_s)$ on $\Gamma$ corresponding to the Helmholtz solutions $u_p$ and $u_s$ in $\Omega^+$. At the core of the construction of regularizing operators lies the use of {\em coercive} approximations of {\rm DtN} \added[id=vD]{(Dirichlet-to-Neumann)} operators which are typically represented in the form of Fourier square root multipliers. Once such an operator is constructed, approximations of {\rm DtN} operators are used to access the Neumann Cauchy data $(\partial_{\nnn}^+u_p, \partial_{\nnn}^+u_s)$ on $\Gamma$, which in turn leads via Green's identities to representations of the fields $(u_p,u_s)$ in $\Omega^+$.
Specifically, from  \eqref{eq:representation}, the jump relations for the layer potentials and Theorem \ref{th:psido:exp}
\begin{equation}\label{eq:DtN_W}
 {\rm DtN}_k = 2\W_k-2\K_k^\top =2\W_k +\mathrm{OPS}(-3)=   \HH\D_{ 1}+\frac{k^2}2\HH\D_{-1}+\mathrm{OPS}(-2)
\end{equation}
we can easily deduce
\begin{equation}\label{eq:DTNptilde}
 \begin{aligned}
{\rm DtN}_k& =2\W_{\widetilde{k}}+\mathrm{OPS}(-1) =  \HH\D+\mathrm{OPS}(-1), \quad k\in \{k_p,k_s\}
 \end{aligned}
\end{equation}
where
\begin{equation}\label{eq:comp_waven}
\widetilde{k}_p:=k_p+i\varepsilon_p\qquad \widetilde{k}_s:=k_s+i\varepsilon_s,\qquad \varepsilon_p>0,\ \varepsilon_s>0.
\end{equation}
\added[id=catB]{Using BIOs with complex wave-numbers for the approximation of DtN operators is common practice in the construction of regularized BIE formulations~\cite{Antoine,AntoineX,boubendir2015regularized}. At the theoretical level the complexification procedure ensures certain coercivity properties that are needed to establish the well-posedness of the regularized formulations. On the practical side, if the imaginary parts of the complex wave-numbers is selected to be proportional to the cubic root of the real wave-number in the underlying scattering application, with constants of proportionality in turn depending on the (positive) curvature of the boundary, the aforementioned DtN approximations are quasi-optimal as far as the iterative performance of regularized formulations is concerned.}

Using the complexified DtN approximations above, we express then the fields $(u_p,u_s)$ in the combined field form
\begin{eqnarray}\label{eq:systemD}
 u_r&=& {\rm DL}_{r}[ \varphi_r] -2{\rm SL}_{r} [\W_{\widetilde{k}_r}\varphi_r],\qquad r\in\{p,s\}
 \end{eqnarray}
through a regularizing operator ${\cal R}$ in terms of boundary functional densities $(\varphi_p,\varphi_s)$
\[
 \begin{bmatrix}
  \varphi_p\\
  \varphi_s
 \end{bmatrix}=
  {\cal R}\begin{bmatrix}
                f_p\\
                f_s
               \end{bmatrix}
\]
for appropriate $(f_p,f_s)$. The enforcement of boundary conditions~\eqref{eq:DH} on the combined field representation~\eqref{eq:systemD} leads to the \added[id=catB]{Combined Field Regularized Integral Equation (CFIER)} formulation to be solved for the unknown densities $(f_p,f_s)$
\begin{equation}\label{eq:RegFormulation}
{\cal A}^{\rm comb}{\cal R}\begin{bmatrix}
                f_p\\
                f_s
               \end{bmatrix}
 =\begin{bmatrix}
   -{\bf u}^{\rm inc}\cdot \nnn\\
 -{\bf u}^{\rm inc}\cdot \ttt
  \end{bmatrix}, \quad
   {\cal A}^{\rm comb}:= {\cal A}_{\rm DL} -2{\cal A}_{\rm SL} \begin{bmatrix}
                                        \W_{\widetilde{k}_p} & \\
                                            &\W_{\widetilde{k}_s}
                                       \end{bmatrix}.
\end{equation}
(See \eqref{eq:ADL} and \eqref{eq:ASL}).
Provided the regularizing operator ${\cal R}$ is constructed per the prescriptions above, the CFIER operator in the left hand side of equation~\eqref{eq:RegFormulation} \added[id=catB]{should be} an approximation of the identity operator. Clearly, the main challenge in this approach is the construction of the regularizing operator ${\cal R}$. Using the {\rm DtN} maps ${\rm DtN}_p$ and  ${\rm DtN}_s$, the \emph{exact} regularizing operator $\mathcal{R}^{\rm ex}_{\rm D}$ has the property
\[
\begin{bmatrix} {\rm DtN}_p & \partial_{\ttt}\\ \partial_{\ttt} & -{\rm DtN}_s\end{bmatrix}\mathcal{R}^{\rm ex}_{\rm D}=\mathcal{I}.
\]
(see \eqref{eq:DH})
and therefore, {\em formally},
\begin{eqnarray*}
 \mathcal{R}^{\rm ex}_{\rm D} &=& \begin{bmatrix} {\rm DtN}_p & \partial_{\ttt}\\ \partial_{\ttt} & -{\rm DtN}_s\end{bmatrix}^{-1}
 =
  ({\rm DtN}_p{\rm DtN}_s+\partial_{\bm t}^2)^{-1}
   \begin{bmatrix}
   {\rm DtN}_s& \partial_{\ttt}\\
    \partial_{\ttt} &  -{\rm DtN}_p
   \end{bmatrix}.
\end{eqnarray*}
Notice that by \eqref{eq:DtN_W}
\[
\begin{aligned}
 {\rm DtN}_p{\rm DtN}_s+\partial_{\bm t}^2 &=
 \left(\HH\D +\frac{k_p^2}{2}\HH\D_{-1}\right)
 \left(\HH\D +\frac{k_s^2}{2}\HH\D_{-1}\right)+\D_2 +\mathrm{OPS}(-2)
 \\
 &=
  -\frac{k_p^2+k_s^2}2\I +\mathrm{OPS}(-2)
  \end{aligned}
\]
which lead us to propose
\begin{equation}
 \label{def:eq:RD}
  \mathcal{R}_{\rm D} =-\frac{2}{\widetilde{k}_p^2+\widetilde{k}_s^2}\mathcal{H}_0\HH\D-\frac{1}{\widetilde{k}_p^2+\widetilde{k}_s^2}\begin{bmatrix}
                       \widetilde{k}_s^2\\
                         &- \widetilde{k}_p^2
                      \end{bmatrix} \HH\D_{-1}+\begin{bmatrix}
                      \mathrm{J}&\\
                      &\mathrm{J}
                      \end{bmatrix}
\end{equation}
as an approximation of $\mathcal{R}_{\rm D}^{\rm ex}$. It is easy to see that
\[
 \mathcal{R}_{\rm D} \begin{bmatrix} {\rm DtN}_p & \partial_{\ttt}\\ \partial_{\ttt} & -{\rm DtN}_s\end{bmatrix} = \mathcal{I}+
 \mathrm{OPS}(-1).
\]
Moreover ${\cal R}_{\rm D}:H^s(\Gamma)\times H^s(\Gamma)\to H^{s-1}(\Gamma)\times H^{s-1}(\Gamma)$, i.e.   ${\cal R}_{\rm D}\in \mathrm{OPS}(1)$, and it is injective since a Fourier matrix principal symbol is
\[
 \widehat{\mathcal{R}_{\rm D}}(0) := \begin{bmatrix}
                     1&\\
                      &1
                    \end{bmatrix},\qquad
\widehat{\mathcal{R}_{\rm D}}(n) =  \frac{ 2}{\widetilde{k}_p^2+\widetilde{k}_s^2}
 \begin{bmatrix}
                     |n|-\frac{\widetilde{k}_s^2}{2|n|}& -   n i \\
                     -   n i   &-|n|+\frac{ \widetilde{k}^2_p}{2 |n|}
 \end{bmatrix}
\]
which is clearly invertible, but not uniformly invertible, for all $n$.

\added[id=catB]{Alternatively, we can construct an approximation for $\mathcal{R}^{\rm ex}_{\rm D}$  via the 
Fourier multipliers $2\mathrm{PS}(\W_{\widetilde{k}_p})$ and $2\mathrm{PS}(\W_{\widetilde{k}_s})$  (cf. \eqref{eq:DTNptilde} and \eqref{eq:02:th:psido:exp} in Theorem \ref{th:psido:exp}) which can be easily defined as
\begin{equation}\label{eq:PSY}
 2\widehat{\mathrm{PS}(\W_{\widetilde{k}_p})}(n) =
-  (n^2- \widetilde{k}_p^2)^{1/2}
\qquad 2\widehat{\mathrm{PS}(\W_{\widetilde{k}_s})}(n) =
- (n^2- \widetilde{k}_s^2)^{1/2}  .
\end{equation}
\added[id=vD]{Note that with this definition we take advantage of the improved (in the sense of regularity of the remainder) approximation $\W_{\widetilde{k}_p}-\mathrm{PS}(\W_{\widetilde{k}_p}) \in \mathrm{OPS}(-3)$}.
}

Using formulas~\eqref{eq:PSY}, and taking into account \eqref{eq:DtN_W}--\eqref{eq:comp_waven}, we obtain  $\mathrm{OPS}(1)\ni \widetilde{\mathcal{R}}_{\rm D} \approx \mathcal{R}^{\rm ex}_{\rm D}$,  the Fourier multiplier of matrix principal symbol
\begin{equation}\label{eq:sqR}
\widehat{\widetilde{\mathcal{R}}_{\rm D}}(0) = \begin{bmatrix}
                                      1&\\
                                       & 1
                                     \end{bmatrix}, \quad
\widehat{\widetilde{\mathcal{R}}_{\rm D}}(n):=
\added[id=catB]{r_{\rm D}(n,\widetilde{k}_p,\widetilde{k}_s)} \begin{bmatrix} (n^2-  \widetilde{k}_s ^2)^{1/2} & -in \\ -in & -(n^2- \widetilde{k}_p ^2)^{1/2}\end{bmatrix},\quad n\ne 0,
\end{equation}
where
\begin{equation}\label{eq:Delta}
\added[id=catB]{r_{\rm D}(n,\widetilde{k}_p,\widetilde{k}_s)}:=\frac{1}{n^2-(n^2-\widetilde{k}_p^2)^{1/2}( n^2-\widetilde{k}_s^2)^{1/2}}=\frac{n^2+(n^2-\widetilde{k}_p^2)^{1/2}( n^2-\widetilde{k}_s^2)^{1/2}}{(\widetilde{k}_p^2+\widetilde{k}_s^2)n^2-\widetilde{k}_p^2\widetilde{k}_s^2}.
\end{equation}
Note that $\added[id=catB]{r_{\rm D}(n,\widetilde{k}_p,\widetilde{k}_s)}$ is well defined since the denominator has positive  imaginary part (cf. \eqref{eq:comp_waven}).

Furthermore, since
\[
\added[id=catB]{r_{\rm D}(n,\widetilde{k}_p,\widetilde{k}_s)}=
\frac{2}{\widetilde{k}_p^2+\widetilde{k}_s^2} -\frac{\widetilde{k}_p^2-\widetilde{k}_s^2}{2(\widetilde{k}_p^2+\widetilde{k}_s^2)n^2}+{\cal O}(n^{-4})
\]
we easily deduce
\begin{equation}\label{eq:4.14}
 \widetilde{\mathcal{R}}_{\rm D} = \mathcal{R}_{\rm D} -\frac{\widetilde{k}_p^2-\widetilde{k}_s^2}{2(\widetilde{k}_p^2+\widetilde{k}_s^2)}\mathcal{H}_0\HH\D_{-1}+\mathrm{OPS}(-2).
\end{equation}

Before entering into the analysis, let us note that
\begin{eqnarray*}
\mathcal{A} ^{\rm comb} & =&
\mathcal{A}_{\rm DL}-  2{\cal A}_{\rm SL} \begin{bmatrix}
                                        \W_{\widetilde{k}_p} & \\
                                            & \W_{\widetilde{k}_s}
                                       \end{bmatrix}\\
                                       &=&
  \frac{1}2 \mathcal{H}_0 \HH \D +\frac{1}4\begin{bmatrix}
                                            k_p^2&\\
                                                 & -k_s^2
                                           \end{bmatrix}\HH\D_{-1}
\\
&&+\left( \frac12 \mathcal{H}_0+\frac{1}4 \begin{bmatrix}
                                 &k_s^2\\
                                 k_p^2&
                                \end{bmatrix}\HH\D_{-2}
\right)  \left( \HH\D+ \frac{1}2\begin{bmatrix}
                                            \widetilde{k}_p^2&\\
                                                 &  \widetilde{k}_s^2
                                           \end{bmatrix}\HH\D_{-1}
                     \right)+\mathrm{OPS}(-2)
\\
&=&
\mathcal{A}^{\rm comb}_0+\mathrm{OPS}(-2),\\
&& \qquad\quad\mathcal{A}^{\rm comb}_0:=\mathcal{H}_0 \HH\D +\frac14 \begin{bmatrix}
                                 (k_p^2+\widetilde{k}_p^2)\I&  (k_s^2-\widetilde{k}_s^2)\HH\\
                                   (k_p^2-\widetilde{k}_p^2)\HH& -(k_s^2+ \widetilde{k}_s^2)\I
                                \end{bmatrix}\HH \D_{-1}.
\end{eqnarray*}

\begin{lemma}\label{lemma:4.2}
The CFIER operators
\begin{equation}
\begin{aligned}
{\cal A}_{\rm CFIER}&:=
{\cal A}^{\rm comb}{\cal R}_{\rm D},\qquad
\widetilde{\cal A}_{\rm CFIER}&:=
{\cal A}^{\rm comb}\widetilde{\cal R}_{\rm D}.
\end{aligned}
\end{equation}
are compact perturbations of an invertible
\added[id=vD]{Fourier multiplier operator}
of order 0.
\end{lemma}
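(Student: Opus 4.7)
The plan is to expand the product $\mathcal{A}^{\rm comb}\mathcal{R}_{\rm D}$ and extract its $\mathrm{OPS}(0)$ principal part modulo an $\mathrm{OPS}(-1)$ remainder. Since $\Gamma$ is a smooth closed curve, any operator in $\mathrm{OPS}(-1)$ acts compactly on $H^s(\Gamma)\times H^s(\Gamma)$ by Rellich, so this immediately yields the asserted compact perturbation of an invertible $\mathrm{OPS}(0)$ operator.

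Write $\mathcal{A}^{\rm comb} = \mathcal{H}_0 \HH \D + \mathcal{B}_{-1} + \mathrm{OPS}(-2)$, where $\mathcal{B}_{-1}$ denotes the $\HH\D_{-1}$-order matrix block displayed in $\mathcal{A}_0^{\rm comb}$, and $\mathcal{R}_{\rm D} = -\frac{2}{\widetilde{k}_p^2+\widetilde{k}_s^2}\mathcal{H}_0\HH\D + \mathcal{S}_{-1} + \mathcal{J}_0$, with $\mathcal{S}_{-1}$ the diagonal $\HH\D_{-1}$ piece and $\mathcal{J}_0$ the finite-rank block built from $\mathrm{J}$. Although the nominal leading term of the product sits in $\mathrm{OPS}(2)$, the fact that $\HH$ and $\D$ are scalar Fourier multipliers makes them commute past every entry of $\mathcal{H}_0$, so
\[
(\mathcal{H}_0\HH\D)(\mathcal{H}_0\HH\D) = \mathcal{H}_0^2(\HH\D)^2 = 0
\]
by the nilpotency $\mathcal{H}_0^2 = 0$. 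Blocks paired with $\mathcal{J}_0$ are either annihilated by $\D$ or are of finite rank, and $\mathcal{B}_{-1}\mathcal{S}_{-1}\in\mathrm{OPS}(-2)$, so the $\mathrm{OPS}(0)$ principal part reduces to the two cross terms $(\mathcal{H}_0\HH\D)\,\mathcal{S}_{-1}$ and $-\frac{2}{\widetilde{k}_p^2+\widetilde{k}_s^2}\mathcal{B}_{-1}(\mathcal{H}_0\HH\D)$. Commuting Fourier multipliers past $\mathcal{H}_0$ and using $\HH^2 = -\I$ together with $\D\D_{-1}\equiv\I\equiv \D_{-1}\D$ modulo smoothing, a direct block computation produces
\[
\mathcal{P} := \frac{1}{2(\widetilde{k}_p^2+\widetilde{k}_s^2)}\begin{bmatrix} a\,\I & -b\,\HH \\ b\,\HH & a\,\I \end{bmatrix},\qquad a := k_p^2+k_s^2+\widetilde{k}_p^2+\widetilde{k}_s^2,\qquad b := k_p^2+k_s^2-\widetilde{k}_p^2-\widetilde{k}_s^2.
\]

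Invertibility of $\mathcal{P}$ follows from criterion \eqref{eq:inverH2}, since $a\cdot a+(-b)\cdot b = a^2-b^2 = (a-b)(a+b) = 4(k_p^2+k_s^2)(\widetilde{k}_p^2+\widetilde{k}_s^2)\neq 0$; the last factor is nonzero because $\widetilde{k}_p,\widetilde{k}_s$ have strictly positive imaginary part by \eqref{eq:comp_waven}. This establishes the claim for $\mathcal{A}_{\rm CFIER}$. For $\widetilde{\mathcal{A}}_{\rm CFIER}$ I invoke \eqref{eq:4.14}, which yields $\widetilde{\mathcal{R}}_{\rm D}-\mathcal{R}_{\rm D} = -\frac{\widetilde{k}_p^2-\widetilde{k}_s^2}{2(\widetilde{k}_p^2+\widetilde{k}_s^2)}\mathcal{H}_0\HH\D_{-1}+\mathrm{OPS}(-2)$; multiplying on the left by $\mathcal{A}^{\rm comb}$ the leading cross term vanishes once more by $\mathcal{H}_0^2=0$, and the residue lies in $\mathrm{OPS}(-1)$, so $\widetilde{\mathcal{A}}_{\rm CFIER} = \mathcal{A}_{\rm CFIER}+\mathrm{OPS}(-1)$ inherits the same invertible principal part $\mathcal{P}$.

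The main obstacle is the careful accounting of the $\mathrm{OPS}(0)$ cross terms: because the nominal top-order $\mathrm{OPS}(2)$ contribution collapses by the nilpotency of $\mathcal{H}_0$, the principal symbol at order $0$ emerges only from pairing the order-$1$ piece of one factor with the order-$(-1)$ piece of the other, and the invertibility of $\mathcal{P}$ rests on the delicate identity $a^2-b^2 = 4(k_p^2+k_s^2)(\widetilde{k}_p^2+\widetilde{k}_s^2)$; a single sign slip in the entries of $\mathcal{B}_{-1}$, $\mathcal{S}_{-1}$, or $\mathcal{H}_0$ would destroy the conclusion.
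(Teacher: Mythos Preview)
Your proof is correct and follows essentially the same route as the paper's: you expand $\mathcal{A}^{\rm comb}\mathcal{R}_{\rm D}$ modulo $\mathrm{OPS}(-1)$, exploit the nilpotency $\mathcal{H}_0^2=0$ to kill the nominal $\mathrm{OPS}(2)$ and $\mathrm{OPS}(1)$ pieces, and arrive at exactly the same principal part (your $a,b$ are the paper's $\alpha+\widetilde\alpha,\ \alpha-\widetilde\alpha$), with the same invertibility check $a^2-b^2=4(k_p^2+k_s^2)(\widetilde{k}_p^2+\widetilde{k}_s^2)\neq 0$ and the same use of \eqref{eq:4.14} plus $\mathcal{H}_0^2=0$ for $\widetilde{\mathcal{A}}_{\rm CFIER}$.
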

\begin{proof}Clearly (recall that $\mathcal{H}_0$ commutes with $\HH$ and $\D$ and that $\mathcal{H}_0^2=0$),
\begin{eqnarray*}
   {\cal A}_{\rm CFIER} &=&
   \frac{1}{\widetilde{k}_p^2+\widetilde{k}_s^2} {\cal A}_0^{\rm comb}
                                \left(
                              2\mathcal{H}_0\D+ \begin{bmatrix}
                         \widetilde{k}_s^2\\
                         & -\widetilde{k}_p^2
                      \end{bmatrix} \D_{-1}\right)
                     +\mathrm{OPS}(-1)\\
                      &=&
  \frac{1}{2(\widetilde{k}_p^2+\widetilde{k}_s^2)}
  \begin{bmatrix}
  (\alpha+\widetilde{\alpha})\I &-(\alpha-\widetilde{\alpha})  \HH\\
(\alpha-\widetilde{\alpha})\HH        &
(\alpha+\widetilde{\alpha})
 \I
 \end{bmatrix}
%
+\mathrm{OPS}(-1)
\end{eqnarray*}
where
\[
 \alpha =k_p^2+k_s^2,\quad
 \widetilde{\alpha}=\widetilde{k}_p^2+\widetilde{k}_s^2.
\]
Besides, by \eqref{eq:4.14},
\begin{eqnarray*}
\widetilde{\cal A}_{\rm CFIER}&=&
{\cal A}^{\rm comb}\widetilde{\cal R}_{\rm D}=\mathcal{A}^{\rm comb}\mathcal{R}_{\rm D}-\frac{\widetilde{k}_p^2-\widetilde{k}_s^2}{2(\widetilde{k}_p^2+\widetilde{k}_s^2)}
\mathcal{A}^{\rm comb} \mathcal{H}_0\HH\D_{-1} +
                                \mathrm{OPS}(-1)\\
&=&
 {\cal A}_{\rm CFIER}\\
 &&+ \frac{\widetilde{k}_p^2-\widetilde{k}_s^2}{2(\widetilde{k}_p^2+\widetilde{k}_s^2)}\underbrace{\left(\mathcal{H}_0+\frac14 \begin{bmatrix}
                                 (k_p^2+\widetilde{k}_p^2)\I&  (k_s^2-\widetilde{k}_s^2)\HH\\
                                   (k_p^2-\widetilde{k}_p^2)\HH& -(k_s^2+ \widetilde{k}_s^2)\I
                                \end{bmatrix}  \D_{-2}\right)
                               \mathcal{H}_0}_{\in\mathrm{OPS}(-2)} +
                                \mathrm{OPS}(-1).
\end{eqnarray*}
In short,
\[
\begin{aligned}
 \widetilde{\cal A}_{\rm CFIER}\ = \ &{\cal A}_{\rm CFIER}+\mathrm{OPS}(-1) \\
 =\
 &\frac{1}{2(\widetilde{k}_p^2+\widetilde{k}_s^2)}
  \begin{bmatrix}
  (\alpha+\widetilde{\alpha})\I &-(\alpha-\widetilde{\alpha})  \HH\\
(\alpha-\widetilde{\alpha})\HH        &
(\alpha+\widetilde{\alpha})
 \I\end{bmatrix}+\mathrm{OPS}(-1).
\end{aligned}
\]

Since
\[
 (\alpha+ \widetilde{\alpha})^2-  (\alpha- \widetilde{\alpha})^2 =4\alpha\widetilde{\alpha} =4(k_p^2+k_s^2)(\widetilde{k}_p^2+\widetilde{k}_s^2)\ne 0
\]
we conclude, cf. \eqref{eq:inverH2}, that the principal part is an invertible operator of order 0 that finishes the proof.
\end{proof}

\begin{theorem}\label{thm_inv} The operators ${\cal A}_{\rm CFIER}$ and $\widetilde{\cal A}_{\rm CFIER}$  are invertible pseudodifferential  operators of order zero.
\end{theorem}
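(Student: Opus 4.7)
The plan follows a standard two-step strategy: reduce invertibility to injectivity by a Fredholm argument, then establish injectivity by extracting a uniqueness statement for an interior Helmholtz problem carrying a nonlocal boundary condition. By Lemma~\ref{lemma:4.2}, each of $\mathcal{A}_{\rm CFIER}$ and $\widetilde{\mathcal{A}}_{\rm CFIER}$ differs from an explicit invertible element of $\mathrm{OPS}(0)$ by a pseudodifferential operator of order $-1$, hence by a compact operator on $H^s(\Gamma)\times H^s(\Gamma)$. Both operators are therefore Fredholm of index zero on the Sobolev scale, and invertibility reduces to injectivity.

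To prove injectivity I would closely follow the template of Theorem~\ref{CFIED}. Suppose $(f_p,f_s)\in\ker\mathcal{A}_{\rm CFIER}$, set $(\varphi_p,\varphi_s)^\top := \mathcal{R}_{\rm D}(f_p,f_s)^\top$, define $u_p$ and $u_s$ via the combined-field representation~\eqref{eq:systemD}, and form ${\bf u}=\nabla u_p+\vcurl u_s$. The kernel hypothesis is precisely $\mathcal{A}^{\rm comb}(\varphi_p,\varphi_s)^\top=0$, which reads ${\bf u}\cdot\nnn=0$ and ${\bf u}\cdot\ttt=0$ on $\Gamma$. Uniqueness for the exterior Navier Dirichlet problem then yields ${\bf u}\equiv 0$ in $\Omega^+$, and the same Helmholtz/curl decoupling used in Theorem~\ref{CFIED} gives $u_p\equiv u_s\equiv 0$ in $\Omega^+$. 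The jump relations~\eqref{eq:jumps} applied to~\eqref{eq:systemD} then translate this exterior vanishing into the nonlocal interior boundary conditions
\[
\partial_\nnn^- u_p \;=\; 2\W_{\widetilde{k}_p}\,\gamma^- u_p, \qquad \partial_\nnn^- u_s \;=\; 2\W_{\widetilde{k}_s}\,\gamma^- u_s \qquad\text{on }\Gamma,
\]
with $u_p,u_s$ Helmholtz solutions in $\Omega$ of real wavenumbers $k_p,k_s$.

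The main obstacle is to show that these conditions force $u_p\equiv u_s\equiv 0$ in $\Omega$. I would exploit the coercivity of $\W_{\widetilde{k}}$ unlocked by $\mathrm{Im}\,\widetilde{k}>0$: the radiating double-layer potential $w:=\DL_{\widetilde{k}_p}\gamma^- u_p$ satisfies Helmholtz with wavenumber $\widetilde{k}_p$ in $\mathbb{R}^2\setminus\Gamma$ and decays exponentially at infinity, so summing Green's first identity over $\Omega$ and $\Omega^+$ and using the jumps $[\partial_\nnn w]=0$, $[\gamma w]=-\gamma^- u_p$ yields
\[
\int_{\mathbb{R}^2\setminus\Gamma}\bigl(|\nabla w|^2 - \widetilde{k}_p^{\,2}|w|^2\bigr)\,\mathrm{d}x
\;=\; \langle \gamma^- u_p,\W_{\widetilde{k}_p}\gamma^- u_p\rangle,
\]
whose imaginary part equals $-2k_p\varepsilon_p\int|w|^2\,\mathrm{d}x\le 0$. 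On the other hand, Green's identity for $u_p$ in $\Omega$ combined with the Robin-type condition gives
\[
\int_\Omega \bigl(|\nabla u_p|^2 - k_p^2 |u_p|^2\bigr)\,\mathrm{d}x
\;=\; 2\langle\gamma^- u_p,\W_{\widetilde{k}_p}\gamma^- u_p\rangle,
\]
whose left-hand side is real. Hence $\mathrm{Im}\langle\gamma^- u_p,\W_{\widetilde{k}_p}\gamma^- u_p\rangle=0$, which forces $w\equiv 0$; consequently $\gamma^- u_p=-[\gamma w]=0$ and $\partial_\nnn^- u_p=0$, and Holmgren's theorem gives $u_p\equiv 0$ in $\Omega$. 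The jump of $\DL_p$ then yields $\varphi_p=0$, and identical reasoning with $(k_s,\widetilde{k}_s)$ produces $\varphi_s=0$. Since $\mathcal{R}_{\rm D}$ is injective (its Fourier symbol displayed after~\eqref{def:eq:RD} is invertible at every frequency $n$), we conclude $(f_p,f_s)=0$. The argument for $\widetilde{\mathcal{A}}_{\rm CFIER}$ is verbatim the same with $\widetilde{\mathcal{R}}_{\rm D}$ in place of $\mathcal{R}_{\rm D}$, whose injectivity follows from the explicit invertibility of the matrix symbol~\eqref{eq:sqR}. The delicate point is thus entirely the coercivity extraction above; everything else is bookkeeping.
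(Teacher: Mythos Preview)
Your proof is correct and follows essentially the same route as the paper's: Fredholm index zero via Lemma~\ref{lemma:4.2}, then injectivity of $\mathcal{A}^{\rm comb}$ by the exterior Navier uniqueness argument from Theorem~\ref{CFIED} followed by an interior step exploiting the coercivity of $\W_{\widetilde{k}}$, and finally injectivity of the regularizers $\mathcal{R}_{\rm D}$, $\widetilde{\mathcal{R}}_{\rm D}$. The only substantive difference is that the paper simply \emph{cites} the strict positivity $\Im\int_\Gamma \W_{\widetilde{k}}f\,\overline{f}>0$ for $\Im\widetilde{k}>0$, $f\neq 0$ (from~\cite{turc2}), whereas you re-derive it via the auxiliary potential $w=\DL_{\widetilde{k}_p}\gamma^- u_p$ and a global Green identity; your first displayed energy identity carries a sign slip (the pairing should appear with a minus), but this does not affect the conclusion since only the vanishing of its imaginary part is used.
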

\begin{proof}
Since  $ {\mathcal{R}}_{\rm D}$ and $\widetilde{\mathcal{R}}_{\rm D}$ are injective, by Fredholm alternative, and  in view of Lemma \ref{lemma:4.2}, it suffices to show that $\mathcal{A} ^{\rm comb} $ cf. \eqref{eq:RegFormulation} is injective too.  Let $(\varphi_p,\varphi_s)\in \mathop{\rm Ker}({\cal A}_{\rm comb})$ and define $u_p,\ u_s:\mathbb{R}^2\setminus\Gamma\to\mathbb{C}$ given by
\begin{eqnarray*}
 u_r&=& {\rm DL}_{r} [\varphi_r] -2{\rm SL}_{r}[ \W_{\widetilde{k}_r}\varphi_r],\qquad r\in\{p,s\}
 \end{eqnarray*}
From the jump relations cf. \eqref{eq:jumps} it follows
\added[id=catR]{
\[
 \added[id=vD]{\llbracket}\partial_{\nnn} u_p\added[id=vD]{\rrbracket} -2\W_{\widetilde{k}_p} \added[id=vD]{\llbracket}\gamma u_p\added[id=vD]{\rrbracket} =0,\quad
 \added[id=vD]{\llbracket}\partial_{\nnn} u_s\added[id=vD]{\rrbracket} -2\W_{\widetilde{k}_s} \added[id=vD]{\llbracket}\gamma u_s\added[id=vD]{\rrbracket} =0.
\]
}
But on account of the uniqueness of solutions of the elastic scattering problem with Dirichlet boundary conditions in $\Omega^+$, we obtain via the same arguments as in Theorem~\ref{CFIED} that $u_p=0$ and $u_s=0$ in $\Omega^+$. Hence, \added[id=catR]{since $\gamma^+u_p=\gamma^+ u_s=0$ and $\partial_{\nnn}^+ u_p=\partial_{\nnn}^+ u_s=0$, we derive}
\[
\partial_{\nnn}^- u_p-2\W_{\widetilde{k}_p}[\gamma^- u_p]=0 \qquad \partial_{\nnn}^- u_s-2\W_{\widetilde{k}_s}[\gamma^- u_s]=0\quad {\rm on}\ \Gamma.
\]
\added[id=catB]{
Applying Green's identities in the domain $\Omega$ we obtain the following identities by taking into account the fact the wave-numbers $k_p$ and $k_s$ are real
\[
 \Im \int_\Gamma \partial_{\nnn}^- u_p\ \overline{\gamma^{-}u}_p=
 \Im \int_\Gamma \partial_{\nnn}^- u_s\ \overline{\gamma^{-}u}_s=0.
\]
Trading the interior Neumann traces in these two equations with the hypersingular operators we derive 
\[
 \Im \int_\Gamma \W_{\widetilde{k}_p}[\gamma^- u_p]\  \overline{\gamma^{-}u}_p=
 \Im \int_\Gamma \W_{\widetilde{k}_s}[\gamma^- u_s]\ \overline{\gamma^{-}u}_s=0.
\]
Given the coercivity property~\cite{turc2}
\[
\Im \int_\Gamma \W_{\widetilde{k}}[f]\ \overline{f} >0,\qquad \Im\widetilde{k}>0,\qquad f\neq 0
\]
}
we conclude that $\gamma^{-}u_p=\partial_{\nnn}^{-}u_p=0$ and $\gamma^{-}u_s=\partial_{\nnn}^{-}u_s=0$, from which, in turn, it follows that $u_p=0$ and $u_s=0$ in $\Omega$. Consequently, it follows that $\varphi_p=0$ and $\varphi_s=0$.
\end{proof}

In an effort to simplify the regularizing operator, we can consider an alternative regularizer ${\cal R}_{{\rm D},1}\in \mathrm{OPS}(1)$ which is defined as the Fourier multiplier matrix operator whose symbol is given by the formula
\begin{equation}\label{eq:qaltR}
\widehat{\mathcal{R}_{{\rm D},1}}(n):=\frac{2}{\widetilde{k}_p^2+\widetilde{k}_s^2}
\begin{bmatrix}
(n^2- \widetilde{k}_s ^2)^{1/2} & -in \\
-in & -(n^2- \widetilde{k}_p ^2)^{1/2}\end{bmatrix}  .
\end{equation}
\added[id=vD]{Notice that this operator}
defines also  a well posed CFIER formulation
since
\[
 \mathcal{R}_{{\rm D},1}-\mathcal{R}_{{\rm D}} \added[id=vD]{\in}\mathrm{OPS}(-2)
\]
(see \eqref{def:eq:RD}) and therefore (see Lemma \ref{lemma:4.2})
\[
{\cal A}^{\rm comb}\mathcal{R}_{{\rm D},1}={\cal A}_{\rm CFIER} + \mathrm{OPS}(-1).
\]

Interestingly, it is possible to propose a different regularizing operator that involves Helmholtz BIOs rather than Fourier multipliers. Indeed, since the operator
\begin{equation}\label{eq:qaltR1}
\mathcal{R}_{{\rm D},2}:= \frac{1}{\widetilde{k}_p^2+\widetilde{k}_s^2} \begin{bmatrix}2\W_{\widetilde{k}_p} & \partial_{\ttt}\\ \partial_{\ttt} & -2 \W_{\widetilde{k}_s}\end{bmatrix}.
\end{equation}
has the property (cf. Theorem \ref{th:psido:exp}) $
\mathcal{R}_{{\rm D},2}-\mathcal{R}_{{\rm D},1}\in \mathrm{OPS}(-3)$,
we can see that the ensuing CFIER formulations based on the operator $\mathcal{R}_{{\rm D},2}$ leads again to Fredholm operators of index 0. We establish the invertibility of the CFIER operator in  the following result
\begin{lemma}  $\mathcal{R}_{{\rm D},2}$ is an injective pseudodifferential operator of order~$1$.
\end{lemma}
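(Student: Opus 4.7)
The plan is to handle the two assertions separately. The order claim is the routine one: since $\W_{\widetilde{k}_p}, \W_{\widetilde{k}_s}\in\mathrm{OPS}(1)$ (mapping $H^{s+1}(\Gamma)\to H^s(\Gamma)$ by the mapping properties reviewed in Section~\ref{GradHessCalculations}) and $\partial_{\ttt}=\D\in\mathrm{OPS}(1)$ as well, the matrix $\mathcal{R}_{{\rm D},2}$ is a continuous operator $H^s(\Gamma)\times H^s(\Gamma)\to H^{s-1}(\Gamma)\times H^{s-1}(\Gamma)$, i.e.\ $\mathcal{R}_{{\rm D},2}\in\mathrm{OPS}(1)$. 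Note also that the scalar prefactor $(\widetilde{k}_p^2+\widetilde{k}_s^2)^{-1}$ is finite, because $\Im(\widetilde{k}_p^2+\widetilde{k}_s^2)=2(k_p\varepsilon_p+k_s\varepsilon_s)>0$ by \eqref{eq:comp_waven}.

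For injectivity the plan is to combine the coercivity of the hypersingular operator at complex wave-numbers with an integration-by-parts identity on the closed curve $\Gamma$. Suppose $\mathcal{R}_{{\rm D},2}(\varphi_p,\varphi_s)^\top=0$, so that
\begin{equation*}
2\W_{\widetilde{k}_p}\varphi_p+\partial_{\ttt}\varphi_s=0,\qquad
\partial_{\ttt}\varphi_p-2\W_{\widetilde{k}_s}\varphi_s=0.
\end{equation*}
Pair the first equation with $\varphi_p$ and the second with $\varphi_s$ in the $L^2(\Gamma)$ inner product (conjugate-linear in the second entry), and use $\langle \partial_{\ttt} f,g\rangle=-\langle f,\partial_{\ttt} g\rangle$ (valid since $\Gamma$ is closed) to eliminate the cross-terms. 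After conjugating the second identity and substituting into the first, the derivative contributions cancel and one is left with
\begin{equation*}
\langle \W_{\widetilde{k}_p}\varphi_p,\varphi_p\rangle
=\overline{\langle \W_{\widetilde{k}_s}\varphi_s,\varphi_s\rangle}.
\end{equation*}
Taking imaginary parts yields $\Im\langle \W_{\widetilde{k}_p}\varphi_p,\varphi_p\rangle+\Im\langle \W_{\widetilde{k}_s}\varphi_s,\varphi_s\rangle=0$.

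The final step is to invoke the coercivity property of $\W_{\widetilde{k}}$ already used in the proof of Theorem~\ref{thm_inv}: for $\Im\widetilde{k}>0$ one has $\Im\int_\Gamma \W_{\widetilde{k}}f\,\overline{f}>0$ whenever $f\neq 0$. Since both summands above are thus non-negative and their sum vanishes, each must be zero, which by strict positivity forces $\varphi_p=\varphi_s=0$. Hence $\mathcal{R}_{{\rm D},2}$ is injective.

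The main (mild) obstacle I anticipate is the bookkeeping of the $L^2$ inner-product conventions: one must be careful that $\W_{\widetilde{k}}$ is symmetric (its kernel is symmetric in $\x,\y$) but not self-adjoint for complex $\widetilde{k}$, so the substitution producing the cancellation of the tangential derivative terms needs the conjugate of one of the two equations rather than the equation itself. Once this is arranged correctly, the coercivity of $\W_{\widetilde{k}}$ does all the remaining work.
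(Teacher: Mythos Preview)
Your proof is correct and follows essentially the same approach as the paper's own argument: pair the two kernel equations with $\varphi_p$ and $\varphi_s$, use integration by parts on the closed curve so that the tangential-derivative cross terms combine into a purely real quantity, take imaginary parts, and conclude via the coercivity $\Im\int_\Gamma \W_{\widetilde{k}}f\,\overline f>0$ for $\Im\widetilde{k}>0$. The only cosmetic difference is that the paper adds the two paired equations and identifies the cross terms as $2\Re\int_\Gamma\overline{\varphi_p}\partial_{\ttt}\varphi_s$, whereas you conjugate one identity and substitute; the substance is identical.
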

\begin{proof}
Assume $(\varphi_p, \varphi_s)\in \mathop{\rm Ker}(\mathcal{R}_{{\rm D},2})$, that is
\begin{eqnarray*}
2\W_{\widetilde{k}_p}[\varphi_p]+\partial_{\ttt} \varphi_s&=&0\\
\partial_{\ttt}\varphi_p-2\W_{\widetilde{k}_s}[\varphi_s]&=&0.
\end{eqnarray*}
Then
\begin{eqnarray*}
0
&=& 2\int_\Gamma  \overline{\varphi_p}\,  {\W_{\widetilde{k}_p}[\varphi_p]} + 2\int_\Gamma \overline{\varphi}_s \,\W_{\widetilde{k}_s}[\varphi_s] +\int_\Gamma \overline{\varphi_p} \partial_{\ttt}\varphi_s  + \int_\Gamma \overline{\partial_{\ttt} \varphi}_s  \varphi_p.
\end{eqnarray*}
(Notice that  integration by parts  has been applied in  the last step).  Then,
\[
\Im \left(\int_\Gamma  \overline{\varphi}_p\,  {\W_{\widetilde{k}_p}[\varphi_p]} +  \int_\Gamma \overline{\varphi}_s \,\W_{\widetilde{k}_s}[{\varphi}_s]\right)=0\added[id=vD]{.}
\]
Given that for    $\Im\widetilde{k}>0$
\[
\int_\Gamma  \W_{\widetilde{k}}[\varphi]\  \overline{\varphi} >0,\qquad \text{for }\varphi\ne 0
\]
we conclude $\varphi_p=\varphi_s=0$, which completes the proof.
\end{proof}
\begin{remark} It is possible to replace the {\rm DtN} approximations ${\rm DtN}_p\approx 2\W_{\widetilde{k}_p}$  and ${\rm DtN}_s\approx 2\W_{\widetilde{k}_s}$ by the Fourier multipliers $2 \mathrm{PS}[\W_{\widetilde{k}_p}]$ and respectively $2 \mathrm{PS}[\W_{\widetilde{k}_s}]$ defined in equations~\eqref{eq:PSY} in the construction of the CFIER operators considered above. In that case, the CFIER operators are of the form
\begin{equation}\label{eq:CFIERY}
{\cal A}_{\rm CFIER}^{\rm PS}=\left({\cal A}_{\rm DL} -2{\cal A}_{\rm SL} \begin{bmatrix}
                                         \mathrm{PS}[\W_{\widetilde{k}_p}]& \\
                                            & \mathrm{PS}[\W_{\widetilde{k}_s}]
                                       \end{bmatrix}\right){\cal R}_{\rm D}.
\end{equation}
Since $\mathrm{PS}[\W_{\widetilde{k}_p}]-\W_{\widetilde{k}_p}\in \mathrm{OPS}(-3)$ and $\mathrm{PS}[\W_{\widetilde{k}_s}]-\W_{\widetilde{k}_s}\in \mathrm{OPS}(-3)$, and $\Im \mathrm{PS}[\W_{\widetilde{k}_p}]>0,\ \Im \mathrm{PS}[\W_{\widetilde{k}_s}]>0$
the CFIER formulations define in equations~\eqref{eq:CFIERY} are well posed.
\end{remark}

\subsection{Neumann Problem}\label{neuP}

\subsubsection{Double and single layer formulations}
We turn our attention to the Neumann problem where the Helmholtz fields $u_p$ and $u_s$ in the Helmholtz decomposition~\eqref{eq:Hdecomp2} satisfy the system of boundary conditions~\eqref{eq:NH}. Looking for Helmholtz double layer potential representations of  $u_p$ and $u_s$ in $\Omega^+$ of the form
\[
 u_p ={\rm DL}_p\,[\varphi_p],\quad
 u_s ={\rm DL}_s\,[\varphi_s]
\]
the system of boundary conditions~\eqref{eq:NH} can be recast as a system of BIE involving the matrix operator ${\cal B}_{\rm DL}$ defined as:
{\small
\begin{equation}\label{eq:BDL}
 \begin{aligned}
   {\cal B}_{\rm DL} & \begin{bmatrix}
                   \varphi_p\\
                   \varphi_s
                  \end{bmatrix}
                  \\
\ :=\  &  \mu
  \begin{bmatrix}
  - \partial_{\ttt}^2 \varphi_p   &    2{\bm t}\cdot    \W_s[\partial_{\ttt} \varphi_s\,\ttt]  \\
2{\bm t}\cdot
\W_p[\partial_{\ttt} \varphi_p\,\ttt]   &    \partial_{\ttt}^2 \varphi_s
\end{bmatrix}+\mu
  \begin{bmatrix}   2{\bm n}\cdot
\W_p[\partial_{\ttt} \varphi_p\,\ttt]   &  -\kappa \partial_{\ttt}  \varphi_s \\
 -\kappa \partial_{\ttt}  \varphi_p &  -2{\bm n}\cdot
\W_s[\partial_{\ttt} \varphi_s\,\ttt]
\end{bmatrix} \\
&  -\mu
  \begin{bmatrix}
   k_p^2 \varphi_p  & \\
   & -k_s^2\varphi_s
\end{bmatrix}  +2\mu
  \begin{bmatrix}
  \nnn\cdot   \K_p^\top [\partial_{\ttt}^2\varphi_p\,\nnn]   &
  \\
  & -  \nnn\cdot   \K_s^\top [\partial_{\ttt}^2 \varphi_s\,\nnn]
   \end{bmatrix}\\
   &
   +2\mu
  \begin{bmatrix}   &
  {\bm t}\cdot    \left(\K_p^\top [\partial_{\ttt}^2 \varphi_p\,\nnn]+\K_p^\top[\partial_{\ttt}\varphi\,{\bm t}]\right)  \\
  {\bm t}\cdot    \left(\K_s^\top [\partial_{\ttt}^2 \varphi_s\,\nnn]+\K_s^\top[\partial_{\ttt}\varphi\,{\bm t}]\right)  &
   \end{bmatrix}\\
   &
    +2\mu  \begin{bmatrix}
  \nnn\cdot    \left(k_p^2\K_p^\top[\varphi_p\,\nnn] +\K_p^\top [\kappa\partial_{\ttt}\varphi_p\ttt] \right) &  \\
  &  - \nnn\cdot  \left( k_s^2\K_s^\top[\varphi_s\,\nnn] +\K_s^\top [\kappa\partial_{\ttt}\varphi_s \ttt] \right)
\end{bmatrix}   \\
 &  +  2\mu \begin{bmatrix}
 &   k_s^2 {\bm t}\cdot  \K_s^\top[\varphi_p\,\nnn] \\
k_p^2 {\bm t}\cdot  \K_p^\top[\varphi_s\,\nnn]   &
\end{bmatrix}
-
\begin{bmatrix}
  \lambda k_p^2(\frac{1}2\varphi_p+\K_p [\varphi_p])  \\
  &
  \mu k_s^2( \frac{1}2\varphi_s +\K_s [\varphi_s])
              \end{bmatrix}.
 \end{aligned}
\end{equation}
}
Applying the results in \eqref{eq:02:prop:principalPart:Hess} in  Proposition \ref{prop:principalPart:Hess}  to \eqref{eq:NH} we can show that
\begin{equation}\label{eq:BDL2}
{\cal B}_{\rm DL} :=
 -\mu {\cal H}_0 \D_2 -\mu {\cal H}_0 \HH\kappa\D-\frac{1}2 \mu
 \begin{bmatrix}
  2k_p^2 \I & -k_s^2 \mathrm{H}\\
  -k_p^2 \mathrm{H} &  -k_s^2 \I
 \end{bmatrix}
-\frac{\lambda}2\begin{bmatrix}
                 k_p^2  \I &0\\
                 0&0
                \end{bmatrix}+\mathrm{OPS}(-1)
\end{equation}
\added[id=catB]{where we recall that $\kappa$ denotes the signed curvature on $\Gamma$ defined in equations~\eqref{eq:def:kappa}.} On the other hand, if  we seek fields $u_p$ and $u_s$ in the form of Helmholtz single layer potentials with wave-numbers $k_p$ and $k_s$, that is
\[
 u_p ={\rm SL}_p\,[\varphi_p],\quad
 u_s ={\rm SL}_s\,[\varphi_s],
\]
the system of boundary conditions~\eqref{eq:NH} gives rise to the system of BIE involving the matrix operator ${\cal B}_{\rm SL}$ defined as (see again Theorem \ref{th:3.6})
\begin{equation}\label{eq:BSL}
 \begin{aligned}
 {\cal B}_{\rm SL}&\begin{bmatrix}
                   \varphi_p\\
                   \varphi_s
                  \end{bmatrix}\\
\ =\ & \mu
                               \begin{bmatrix}
           -2 \nnn\cdot
 \W_k[\varphi_p\,\nnn] & - \partial_{\ttt} \varphi_s \\
- \partial_{\ttt} \varphi_p
  &    2 \nnn\cdot
 \W_s [\varphi_s\,\nnn]
             \end{bmatrix}
+\mu
             \begin{bmatrix}
             \kappa \varphi_p   & -2{\bm t}\cdot
\W_k[\varphi_s\,\nnn] \\
-2{\bm t}\cdot
\W_k[\varphi_p\,\nnn]
  & - \kappa \varphi_s
             \end{bmatrix}\\
             &
             +2\mu         \begin{bmatrix}
               - \nnn\cdot   \K_p^\top [\kappa \varphi_p\,\nnn] &   {\bm t}\cdot \K_s^\top [\varphi_s\,\ttt]  \\
                \ttt\cdot   \K_p^\top [\varphi_p\,\ttt] &   {\bm n}\cdot    \K_s^\top [\varphi_s\,\nnn]
               \end{bmatrix}
             +2\mu\begin{bmatrix}
                 \nnn\cdot  \K_p^\top [\varphi_p\,\ttt] & - {\bm t}\cdot    \K_s^\top [\kappa \varphi_s\,\nnn]\\
            - {\bm t}\cdot    \K_p^\top [\kappa \varphi_p\,\nnn]&
            -\nnn\cdot  \K_s^\top [\varphi_s\,\ttt]
                \end{bmatrix}  \\
                &
                -\begin{bmatrix}
                  \lambda  k_p^2\V_p[\varphi_p]\\
                  &\mu  k_s^2\V_s[\varphi_s]
                 \end{bmatrix}.\end{aligned}
 \end{equation}
Proposition \ref{prop:principalPart:Hess}, identities  \eqref{eq:01:prop:principalPart:Hess},  implies now
 \begin{equation}\label{eq:BSL2}
 {\cal B}_{\rm SL}=-\mu{\cal H}_0\HH\D+\mu{\cal H}_0\kappa-\frac{1}{2}(\lambda+\mu)k_p^2\begin{bmatrix}\I & 0\\ 0& 0\end{bmatrix}\HH\D_{-1}+\mathrm{OPS}(-2).
 \end{equation}
Looking for fields $u_p$ and $u_s$ in the combined field form~\eqref{eq:comb_field} we are led to a CFIE formulation for the Neumann problem with the underlying operator
\[
{\cal B}_{\rm CFIE} =  {\cal B}_{\rm DL}  -i k{\cal B}_{\rm SL},\quad \added[id=catB]{k>0.}
\]
In this case, following the same approach as in the Dirichlet case, we obtain by combining formulas~\eqref{eq:BDL2} and~\eqref{eq:BSL2} 
\begin{equation}\label{eq:BCFIE}
\begin{aligned}
{\cal B}_{\rm CFIE}^2&\ =\  \frac{1}{2} \mu
\begin{bmatrix}
  \added[id=catB]{(}k_p^2 \left(2 \lambda +3 \mu
   )-k_s^2 \mu \right)\I & -
   \left(\omega^2-k_s^2 \mu \right)\HH \\
 -     \left(k_p^2 (\lambda +2 \mu
   )-k_s^2 \mu \right)\HH &  \mu
   \left(k_s^2-k_p^2\right)\I
\end{bmatrix}\mathrm{D}_2+\mathrm{OPS}(1)
\\
&\ =\
 \frac{\mu(\mu+\lambda)}{2(\lambda+2\mu)}\omega^2 \begin{bmatrix}
 \I& \\
 &\I
 \end{bmatrix}
 (\D ^2-\I)+\mathrm{OPS}(1).\quad
 \text{(by \eqref{eq:ks:kp})}
\end{aligned}
\end{equation}
(We have used the definition of the \added[id=catB]{wave-numbers} $k_p$ and $k_s$ given in \eqref{eq:ks:kp}).
Since the leading order operator in formulas~\eqref{eq:BCFIE} is invertible, because so is $\D ^2-\I$, the invertibility of the CFIE operator can be established using identical arguments as in the Dirichlet case (the only notable difference is that the uniqueness of solutions of Navier equations with Neumann boundary conditions in $\Omega^+$ has to be invoked). Note that both operators ${\cal B}_{\rm CFIE}\in \mathrm{OPS}(2)$ and ${\cal B}_{\rm CFIE}^2\in \mathrm{OPS}(2)$, and as such the CFIE formulations are not of the second kind. We describe in what follows a method that delivers robust BIE formulations of the second kind in the Neumann case.

\begin{remark} As in the Dirichlet case we can be tempted to use the combined potential \eqref{remarkCFIED} but the principal part of the square resulting operator is not invertible since it is given by
$
-i (k_p -k_s )\mathcal{H}_0 \HH\D_3.
$
\end{remark}

\subsubsection{Regularized formulations}

The main idea in the derivations of regularized BIE formulations for the boundary value system~\eqref{eq:NH} is the construction of suitable approximations to the operator ${\cal R}_{\rm N}^{\rm ex}$ that maps the left hand side of the system~\eqref{eq:NH} to the Cauchy data of $(u_p,u_s)$ on $\Gamma$. Starting with the formula$
\nabla=\nnn \partial_{\nnn}  +\ttt \partial_{\ttt}$
we get
\begin{equation}\label{eq:twonabla}
\begin{aligned}
\added[id=vD]{\Hes}&=\nabla\nabla^\top=(\nnn \partial_{\nnn}  +\ttt \partial_{\ttt} )(\nnn^\top  \partial_{\nnn} +\ttt^\top \partial_{\ttt}  )
\nonumber\\
&=
\nnn \nnn^\top \partial_{\nnn}^2  +\ttt \ttt^\top \partial_{\ttt}^2+\nnn \ttt^\top \partial_{\nnn}\partial_{\ttt} +\ttt \nnn^\top \partial_{\ttt}\partial_{\nnn} -\kappa\nnn  \nnn^\top \partial_{\nnn} +\kappa \ttt\ttt^\top \partial_{\nnn}.
\end{aligned}
\end{equation}
\added[id=catB]{
Again, using {\rm DtN} operators in the formula  {for the Hessian} above whenever the normal derivative $\partial_{\nnn}$ appears and neglecting the lower order contributions that contain the curvature,  we have, {\em formally}, that from \eqref{eq:NH}}
\begin{equation}\label{eq:Rn1}
\begin{bmatrix}
 \gamma^+u_p\\
  \gamma^+u_s
\end{bmatrix} =
 {\cal R}_N^{\rm ex}
 \begin{bmatrix}
-T{\bf u}^{\rm inc} \cdot \nnn\\
-T{\bf u}^{\rm inc} \cdot \ttt
 \end{bmatrix}
+\mathrm{OPS}(-1),\quad  {\cal R}_{\rm N}^{\rm ex} :=
\begin{bmatrix}2\mu {\rm DtN}_p^2 -\lambda k_p^2 \I & 2\mu \partial_{\ttt} {\rm DtN}_s \\ 2\mu \partial_{\ttt} {\rm DtN}_p & -2\mu {\rm DtN}_s^2-\mu k_s^2 \I
\end{bmatrix}^{-1}.
\end{equation}

Straightforward calculations, with the usual approximation for  operators ${\rm DtN}_k \approx 2\W_k$ and \eqref{eq:DtN_W} , shows that \deleted{, at least formally,}
\[
\begin{aligned}
({\cal R}_{\rm N}^{\rm ex})^{-1} -\mathrm{PS}(\widehat{({\cal R}_{\rm N}^{\rm ex})^{-1}})\ \in\ &\mathrm{OPS}(-1)
\\
\widehat{\mathrm{PS}}\left(\widehat{({\cal R}_{\rm N}^{\rm ex})^{-1}}\right)(n) =\ &
2\mu \begin{bmatrix}
n^2-\frac12\added[id=vD]{k_p^2} & -in(n^2- {k}_s^2)^{1/2} \\[1ex]
-in(n^2- {k}_p^2)^{1/2} & \frac12k_s^2-n^2\end{bmatrix}.
\end{aligned}
\]
This suggests us the following regularizer
\begin{equation}\label{eq:Rn2}
\begin{aligned}
 \widehat{{\cal R}_{\rm N}}(n)\ :=\ &
 \frac{1}{2\mu}
 \begin{bmatrix}
n^2-\frac1{2}\added[id=vD]{k_p^2} & -in(n^2- \widetilde{k}_s^2)^{1/2} \\[1.1ex]
-in(n^2- \widetilde{k}_p^2)^{1/2} & \frac1{2}k_s^2-n^2
\end{bmatrix} ^{-1}\\
=\ & \frac{1}{2\mu }  \added[id=vD]{r_{\rm N}} (n,k_p,k_s,\widetilde{k}_p,\widetilde{k}_s)
\begin{bmatrix}
-(n^2-\frac1{2}k_s^2) &  in(n^2- \widetilde{k}_s^2)^{1/2} \\[1.ex]
in(n^2- \widetilde{k}_p^2)^{1/2} &   n^2-\frac1{2}\added[id=vD]{k_p^2}
\end{bmatrix}
\end{aligned}
\end{equation}
We have {\em complexified} $\widetilde{k}_p$ and $\widetilde{k}_s$ in the off-diagonal terms to ensure the well definiteness, i.e. the invertibility of the matrix since in this case
\[
 \added[id=vD]{r_{\rm N}} (n,k_p,k_s,\widetilde{k}_p,\widetilde{k}_s)=-\left(n^2-\frac{1}{2 }k_p^2\right)\added[id=vD]{\left(n^2-\frac{1}{2 }k_s^2\right)} +n^2(n^2-\widetilde{k}_p^2)^{1/2} (n^2-\widetilde{k}_s^2)^{1/2}\ne 0.
\]
(Notice that   $\Im \added[id=vD]{r_{\rm N} (n,k_p,k_s,\widetilde{k}_p,\widetilde{k}_s)} <0$).
%

Then it can be proved after some direct but tedious calculations that
\begin{equation}\label{eq:formRN}
\begin{aligned}
 {\cal R}_{\rm N} \  =\  &
 \frac{1}{2\mu}\left(\frac{2}{ \widetilde{k}_p^2 -2k_s^2+\widetilde{k}_s^2 } \mathcal{I} +
 \frac{2k_s^4+(\widetilde{k}_p^2-\widetilde{k}_s^2)^2}{(\widetilde{k}_p^2 -2k_s^2+\widetilde{k}_s^2 )^2}
 \D_{-2}\right)\mathcal{H}_0
 \\
 \quad &-
  \frac{1}{2\mu(\widetilde{k}_p^2 -2k_s^2+\widetilde{k}_s^2)}
  \begin{bmatrix}
  k_s^2\I & -\widetilde{k}_s^2\HH\\
  -\widetilde{k}_p^2\HH & -k_s^2\I
 \end{bmatrix}
 \D_{-2}
 +\mathrm{OPS}(-4)
\end{aligned}
\end{equation}

\begin{remark}
It is possible to used complexified wave-numbers for the definition of the Fourier multipliers in equations~\eqref{eq:PSY} when we consider approximations of {\rm DtN} operators in equation~\eqref{eq:Rn1} and to consider only the pseudodifferential operators of order 2 that constitute the leading order contribution to the expression on the left hand side of equation~\eqref{eq:NH}. In that case, we are led to solve the following matrix equation
\begin{equation}\label{eq:Rn3}
2\mu\begin{bmatrix}n^2-\widetilde{k}_p^2 &- in(n^2-\widetilde{k}_s^2)^{1/2} \\
-in(n^2-\added[id=vD]{\widetilde{k}_p^2})^{1/2} & \widetilde{k}_s^2-n^2
\end{bmatrix}\widehat{{\cal R}_{\rm N}}(n)=\mathcal{I}.
\end{equation}
Although such a choice works equally well in practice, it is more difficult to establish the unique solvability of equations~\eqref{eq:Rn3}. Using the actual wave-numbers $k_p$ and $k_s$ when approximating the operators ${\rm DtN}_p^2$ and respectively ${\rm DtN}_s^2$ is also more intuitive if we write the Laplacian operator $\Delta$ in the $(\nnn,\ttt)$ frame and taking into account the fact that $u_p$ and $u_s$ are solutions of the Helmholtz equation with those wave-numbers.
 \end{remark}

Using the regularizing operator defined in equation~\eqref{eq:Rn2} we employ the same strategy as in the Dirichlet case and we arrive at the CFIER formulations
\begin{equation}\label{eq:precond:Neumann}
 {\cal B} ^{\rm comb}{\cal R}_{\rm N}\begin{bmatrix}
                        g^R_p\\
                        g^R_s
                       \end{bmatrix}
= -\begin{bmatrix}
T  {\bf u}^{\rm inc}\cdot{\bm n}\\
T  {\bf u}^{\rm inc}\cdot{\bm t}
   \end{bmatrix}
\end{equation}
with, see \eqref{eq:BDL2},  \eqref{eq:BSL} and \eqref{eq:DTNptilde},
\begin{equation}\label{eq:Bcomb}
\begin{aligned}
  {\cal B} ^{\rm comb}&\ :=\ {\cal B}_{\rm DL} -2{\cal B}_{\rm SL}
 \begin{bmatrix}
    \W_{\widetilde{k}_p} & \\
                         & \W_{\widetilde{k}_s}
                                       \end{bmatrix}
\\
=\ &-2\mu\mathcal{H}_0(\D_2+\kappa \HH\D)-\frac12\mu \mathcal{H}_0
\begin{bmatrix}
 \widetilde{k}_p^2&\\
      &\widetilde{k}_s^2
\end{bmatrix}
+\frac14 \begin{bmatrix}
          -3 \omega^2\I&2 k_s^2 \mu \HH  \\
 2 k_p^2 \mu  \HH&  3 k_s^2 \mu  \I\\
         \end{bmatrix} +\mathrm{OPS}(-1).
         \end{aligned}
\end{equation}

\begin{lemma}
 It holds
 \begin{equation}\label{eq:01:lemma:4.8}
  \begin{aligned}
{\cal B} ^{\rm comb}{\cal R}_{\rm N}
 = &   \begin{bmatrix}
 \left(2 \mu  \left(-3 \widetilde{k}_p^2+3 k_s^2+\widetilde{k}_s^2\right)-3\omega^2\right)\I &   \left( 3\omega^2 +2 \mu
   \left(\widetilde{k}_p^2+k_s^2-3 \widetilde{k}_s^2\right)\right)\HH \\
   \mu  \left(2 k_p^2+6 \widetilde{k}_p^2-7 k_s^2-2 \widetilde{k}_s^2\right)\HH &
   \mu  \left(2 k_p^2+2 \widetilde{k}_p^2+k_s^2-6 \widetilde{k}_s^2\right) \I
         \end{bmatrix}
         \\
    &         +\mathrm{OPS}(-1).
\end{aligned}
 \end{equation}
 Furthermore, the principal part is an \added[id=vD]{invertible Fourier multiplier operator of order zero.}
\end{lemma}
\begin{proof}
Using \eqref{eq:formRN} (recall again that ${\cal H}_0^2=0$) we conclude
\[
\begin{aligned}
   \bigg({\cal B}_{\rm DL}& -2{\cal B}_{\rm SL} \begin{bmatrix}
 \W_{\widetilde{k}_p} & \\
                         & \W_{\widetilde{k}_s}
 \end{bmatrix}
\bigg){\cal R}_{\rm N}\\
& \ =\
\frac{1}{  \widetilde{k}_p^2 -2k_s^2+\widetilde{k}_s^2}
\bigg(
  \mathcal{H}_0
  \begin{bmatrix}
  k_s^2\I & -\widetilde{k}_s^2\HH\\
  -\widetilde{k}_p^2\HH & -k_s^2\I
 \end{bmatrix}
-\frac12 {\cal H}_0\begin{bmatrix}
 \widetilde{k}_p^2&\\
      &\widetilde{k}_s^2
\end{bmatrix}{\cal H}_0
\\
& \qquad
 -
\frac{1}{4\mu}\begin{bmatrix}
          -3 k_p^2 (\lambda +2 \mu )\I&2 k_s^2 \mu \HH  \\
 2 k_p^2 \mu  \HH&  3 k_s^2 \mu  \I\\
         \end{bmatrix}\mathcal{H}_0
         \bigg) +\mathrm{OPS}(-1)
\end{aligned}
\]
from where \added[id=vD]{\eqref{eq:01:lemma:4.8} follows after some direct manipulations}.
The principal part is invertible since the {\em determinant} of the principal part \deleted{(cf. \eqref{eq:inverH2})} is given (see Lemma below) by
\[
\frac{(3 \lambda +4 \mu )k_p^2 +k_s^2 \mu }{4 \mu
   \left(\widetilde{k}_p^2-2 k_s^2+\widetilde{k}_s^2\right)}=\frac{(2 \lambda +3 \mu )k_p^2}{2  (\mu
    (\widetilde{k}_p^2+\widetilde{k}_s^2 )-2 \omega^2 )}\ne 0.
\]
\added[id=vD]{(Notice that in these last calculations we have used that $\omega^2 = (\lambda+2\mu)k_p^2 = \mu k_s^2$.)}
\end{proof}

\begin{lemma}
The matrix
{\small
 \[
  A = \frac{1}{4\mu (\widetilde{k}_p^2 -2k_s^2+\widetilde{k}_s^2)}\begin{bmatrix}
 2 \mu  \left(-3 \widetilde{k}_p^2+3 k_s^2+\widetilde{k}_s^2\right)-3\omega^2 &   \left( 3\omega^2 +2 \mu
   \left(\widetilde{k}_p^2+k_s^2-3 \widetilde{k}_s^2\right)\right) i \\
   \mu  \left(2 k_p^2+6 \widetilde{k}_p^2-7 k_s^2-2 \widetilde{k}_s^2\right)i &
   \mu  \left(2 k_p^2+2 \widetilde{k}_p^2+k_s^2-6 \widetilde{k}_s^2\right)
         \end{bmatrix}
 \]
 }
 is invertible with
 \[
  \det A = \frac{k_p^2 (3 \lambda +4 \mu )+k_s^2 \mu }{4 \mu
   \left(\widetilde{k}_p^2-2 k_s^2+\widetilde{k}_s^2\right)}\ne 0.
 \]
\end{lemma}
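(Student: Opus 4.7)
The claim is purely algebraic: given the explicit $2\times 2$ matrix $A$, we must verify the stated formula for $\det A$ and check that it is nonzero. My plan is a direct expansion, organised so as to exploit the distinguishing structural identity of Navier theory, namely
\[
k_p^2(\lambda+2\mu)=\omega^2=k_s^2\mu,
\]
which follows from \eqref{eq:ks:kp} and which is what ultimately makes the off-diagonal entries combine cleanly with the diagonal ones.

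First I would factor the common scalar prefactor out, writing $A=(4\mu D)^{-1}M$ with $D:=\widetilde{k}_p^2-2k_s^2+\widetilde{k}_s^2$ and $M$ the bracketed inner matrix, so that $\det A=(4\mu D)^{-2}\det M$. Since $M$ has purely imaginary off-diagonal entries of the form $a_{12}i$ and $a_{21}i$, the two cross-terms combine as $-(a_{12}i)(a_{21}i)=a_{12}a_{21}$, so
\[
\det M = a_{11}a_{22}+a_{12}a_{21},
\]
where $a_{11},a_{12},a_{21},a_{22}$ are the four real polynomials in $k_p^2,k_s^2,\widetilde{k}_p^2,\widetilde{k}_s^2,\lambda,\mu$ read off from $M$. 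The point of the plan is to show that this sum factors as $4\mu D\cdot\bigl(k_p^2(3\lambda+4\mu)+k_s^2\mu\bigr)$, because then cancelling one factor of $4\mu D$ against the prefactor $(4\mu D)^{-2}$ yields exactly the claimed value of $\det A$.

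Next I would use the Navier identity $k_p^2(\lambda+2\mu)=k_s^2\mu=\omega^2$ to eliminate the $3k_p^2(\lambda+2\mu)$ terms appearing in $a_{11}$ and $a_{12}$, rewriting them as $3k_s^2\mu$. After this substitution all four entries $a_{ij}$ are homogeneous of total degree $2$ in the wave-number variables with coefficients in $\mu$ alone (the $\lambda$ dependence has been absorbed into $\omega^2$). One then expands $a_{11}a_{22}+a_{12}a_{21}$ mechanically and re-introduces $\lambda$ at the very end via $3k_s^2\mu=3k_p^2(\lambda+2\mu)$ in the single place it is needed to recover the factor $k_p^2(3\lambda+4\mu)+k_s^2\mu$ on the right. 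This step, though elementary, is the main obstacle of the proof: the bookkeeping is heavy and sign errors are easy. My strategy to keep it under control is to regard $k_p^2,k_s^2,\widetilde{k}_p^2,\widetilde{k}_s^2$ as four independent indeterminates, treat $\omega^2$ purely as shorthand, and verify the polynomial identity in this enlarged ring (only at the very end invoking $k_p^2(\lambda+2\mu)=k_s^2\mu$).

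Finally, once the factorisation $\det M=4\mu D\cdot(k_p^2(3\lambda+4\mu)+k_s^2\mu)$ is established, nonvanishing of $\det A$ is immediate: the factor $k_p^2(3\lambda+4\mu)+k_s^2\mu$ is a strictly positive real number (since $\lambda,\mu>0$ and $k_p,k_s>0$), while $D=\widetilde{k}_p^2-2k_s^2+\widetilde{k}_s^2$ has strictly positive imaginary part $2k_p\varepsilon_p+2k_s\varepsilon_s>0$ by \eqref{eq:comp_waven}, hence $D\neq 0$. This gives both the stated value and the invertibility assertion. The equivalent form $\det A=(2\lambda+3\mu)k_p^2/[2(\mu(\widetilde{k}_p^2+\widetilde{k}_s^2)-2\omega^2)]$ used in the preceding lemma then follows by a one-line use of the Navier identity in numerator and denominator.
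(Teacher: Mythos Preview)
Your brute-force plan is correct and would work: writing $A=(4\mu D)^{-1}M$, expanding $\det M=a_{11}a_{22}+a_{12}a_{21}$, and checking that it equals $4\mu D\cdot\bigl(k_p^2(3\lambda+4\mu)+k_s^2\mu\bigr)$ is a valid route, and your nonvanishing argument for the two factors is fine. (In fact the factorisation already holds as a polynomial identity without invoking $k_p^2(\lambda+2\mu)=k_s^2\mu$; you only need that identity for the alternative form of $\det A$ quoted in the preceding lemma.)

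The paper, however, avoids the four-term expansion entirely by spotting an eigenvector. One checks in two lines that
\[
A\begin{bmatrix}i\\1\end{bmatrix}=-\begin{bmatrix}i\\1\end{bmatrix},
\]
since $a_{11}+a_{12}=-4\mu D$ and $a_{22}-a_{21}=-4\mu D$. Thus $-1$ is an eigenvalue, the other eigenvalue is $\mathrm{Tr}(A)+1$, and $\det A=-(1+\mathrm{Tr}(A))$. Computing the trace uses only the two diagonal entries, and the cancellations happen immediately. Your approach buys generality (no structural insight needed), while the paper's buys economy: the eigenvector $(i,1)^\top$ is a direct shadow of the nilpotent $\mathcal{H}_0$ structure underlying these operators, and the same trick is reused in the next lemma.
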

\begin{proof}
 Notice that
 \[
  A \begin{bmatrix}
     i\\
     1
    \end{bmatrix}
=- \begin{bmatrix}
     i\\
     1
    \end{bmatrix}.
 \]
 Hence $
  \det A = -(1+\mathop{\rm Tr}(A))$
 from where the result follows.
\end{proof}

 Just like in the Dirichlet case (see \eqref{eq:qaltR}), we can construct simpler regularizing operators starting from equation~\eqref{eq:Rn2} and using asymptotic arguments. For instance, let us consider the alternative regularizing operator $ {\cal R}_{{\rm N},1}\in \mathrm{OPS}(0)$ defined as the Fourier multiplier whose symbol is given by
 \begin{equation}\label{eq:RN1}
  \widehat{{\cal R}_{{\rm N},1}}(n)=\begin{bmatrix}1 & -in(n^2-\widetilde{k}_s^2)^{-1/2}\\ -in(n^2-\widetilde{k}_p^2)^{-1/2} & -1\end{bmatrix}.
 \end{equation}
 It is straightforward to see that
 \begin{equation}\label{eq:RN1v2}
    {\cal R}_{{\rm N},1} = \mathcal{H}_0 - \frac{1}2
   \begin{bmatrix}0 &   \widetilde{k}_s^2\I \\    \widetilde{k}_p^2 \I & 0\end{bmatrix}\HH\D_{-2} +\mathrm{OPS}(-4)
   \end{equation}
which is considerably simpler than $ {\cal R}_{\rm N}$  (cf. \eqref{eq:Rn2})  and satisfies
   \[
      {\cal R}_{{\rm N},1} = \frac{1}{\mu( \widetilde{k}_p^2 -2k_s^2+\widetilde{k}_s^2)}
     {\cal R}_{\rm N} +\mathrm{OPS}(-2).
   \]
  Next, we establish
 \begin{lemma}
 It holds
\begin{equation}\label{mainR}
\begin{aligned}
{\cal B}^{\rm comb}
  {\cal R}_{{\rm N},1} &\ =\  \begin{bmatrix}
  \left(2 \mu  \left(\widetilde{k}_p^2+k_s^2+\widetilde{k}_s^2\right)-3
  \omega^2\right) \I&   \left(3 \omega^2+2 \mu
   \left(\widetilde{k}_p^2-k_s^2+\widetilde{k}_s^2\right)\right)\HH \\
    \mu  \left(2 k_p^2-2 \widetilde{k}_p^2-3 k_s^2-2
   \widetilde{k}_s^2\right) \HH& \mu  \left(2 k_p^2+2 \widetilde{k}_p^2-3
   k_s^2+2 \widetilde{k}_s^2\right) \I
\end{bmatrix} + \mathrm{OPS}(-1).
\end{aligned}
 \end{equation}
 Moreover, the principal part of this operator given above is
 an \added[id=vD]{invertible Fourier multiplier operator of order zero.}
\end{lemma}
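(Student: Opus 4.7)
The plan is to compose the expansions \eqref{eq:Bcomb} and \eqref{eq:RN1v2} and track the contributions up to order $-1$. Writing
$$
{\cal B}^{\rm comb} = \mathcal{B}_{(2)} + \mathcal{B}_{(1)} + \mathcal{B}_{(0)} + \mathrm{OPS}(-1),\qquad {\cal R}_{{\rm N},1} = \mathcal{H}_0 + \mathcal{R}_{(-2)} + \mathrm{OPS}(-4),
$$
with $\mathcal{B}_{(2)}=-2\mu\mathcal{H}_0\D_2$, $\mathcal{B}_{(1)}=-2\mu\mathcal{H}_0\kappa\HH\D$, $\mathcal{B}_{(0)}$ the order-zero block on the right-hand side of \eqref{eq:Bcomb}, and $\mathcal{R}_{(-2)}=-\frac{1}{2}M_1\HH\D_{-2}$ with $M_1=\begin{bmatrix}0 & \widetilde{k}_s^2\I\\ \widetilde{k}_p^2\I & 0\end{bmatrix}$, I would multiply out and rely on three standing facts: $\mathcal{H}_0^2=0$; $\mathcal{H}_0$ commutes with every scalar Fourier multiplier, in particular with $\HH$ and each $\D_r$; and multiplication by a smooth function (here $\kappa$) commutes with $\HH$ and $\D_r$ modulo operators of one order lower.

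The first fact kills $\mathcal{B}_{(2)}\mathcal{H}_0$ exactly, and combined with the third it reduces $\mathcal{B}_{(1)}\mathcal{H}_0=-2\mu\mathcal{H}_0\kappa\mathcal{H}_0\HH\D$ to $\mathrm{OPS}(-\infty)$. Every remaining cross term is of order $\le -1$ except for three order-zero contributions that I would compute explicitly: (A) $\mathcal{B}_{(2)}\mathcal{R}_{(-2)}=\mu\mathcal{H}_0 M_1\HH$ modulo the smoothing error coming from $\D_2\D_{-2}=\I-\mathrm{J}$; (B) $-\frac{1}{2}\mu\mathcal{H}_0\,\mathrm{diag}(\widetilde{k}_p^2,\widetilde{k}_s^2)\mathcal{H}_0$, which collapses via $\HH^2=-\I$ to a scalar multiple of $\mathcal{H}_0$; and (C) the $2\times 2$ product of the constant-matrix order-zero block in \eqref{eq:Bcomb} with $\mathcal{H}_0$. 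Summing (A)+(B)+(C) and reading off the coefficients of $\I$ and $\HH$ in each entry yields the principal part in \eqref{mainR}.

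Invertibility of the principal part is then checked against criterion \eqref{eq:inverH2}: writing it as $\begin{bmatrix}a_{11}\I & a_{12}\HH\\ a_{21}\HH & a_{22}\I\end{bmatrix}$, one requires $a_{11}a_{22}+a_{12}a_{21}\ne 0$. I would follow the eigenvector shortcut used in the preceding lemma: a direct check shows that $(i,1)^\top$ is an eigenvector of $\begin{bmatrix}a_{11} & ia_{12}\\ ia_{21} & a_{22}\end{bmatrix}$ with eigenvalue $a_{11}+a_{12}=4\mu(\widetilde{k}_p^2+\widetilde{k}_s^2)$, and the second eigenvalue is then recovered from the trace using the identities $\omega^2=(\lambda+2\mu)k_p^2=\mu k_s^2$ in \eqref{eq:ks:kp}. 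The determinant works out to a nonzero multiple of $\mu k_p^2(2\lambda+3\mu)(\widetilde{k}_p^2+\widetilde{k}_s^2)$, whose non-vanishing follows from $\lambda,\mu,\lambda+2\mu>0$, $\omega>0$, and $\Im(\widetilde{k}_p^2+\widetilde{k}_s^2)=2(k_p\varepsilon_p+k_s\varepsilon_s)>0$ by \eqref{eq:comp_waven}.

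The principal obstacle is bookkeeping rather than conceptual: with several non-commuting matrix blocks the order-zero contribution arises from a delicate cancellation of order-2 and order-1 nilpotent terms, so every cross term and every application of $\HH^2=-\I$ must be tracked carefully. A secondary point to verify is that the smoothing discrepancies from $\D_r\D_{-r}=\I-\mathrm{J}$ and from commuting $\kappa$ past $\HH$ and $\D$ are absorbed into the $\mathrm{OPS}(-1)$ remainder, which is automatic from the pseudodifferential calculus reviewed in Section~\ref{GradHessCalculations}.
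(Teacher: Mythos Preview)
Your proposal is correct and follows essentially the same route as the paper: both compose the expansion \eqref{eq:Bcomb} with \eqref{eq:RN1v2}, use $\mathcal{H}_0^2=0$ to annihilate the top-order contributions, and then verify invertibility of the resulting constant-coefficient principal part via the $(i,1)^\top$ eigenvector trick (which the paper isolates as the subsequent lemma). Your decomposition into the three order-zero pieces (A), (B), (C) simply makes explicit the bookkeeping that the paper compresses into a single displayed formula; note that carrying out your computation will produce an overall factor $\tfrac14$ in front of the matrix, matching the paper's own proof rather than the statement as printed.
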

\begin{proof} Similarly as before,  using \eqref{eq:RN1v2}   we obtain
\[
         \begin{aligned}
& \left({\cal B}_{\rm DL} -2{\cal B}_{\rm SL} \begin{bmatrix}
                                        \W_{\widetilde{k}_p} & \\
                                            & \W_{\widetilde{k}_s}
                                       \end{bmatrix}
\right) {\cal R}_{{\rm N},1} \\
& \quad  =
\frac{1}{4}
\begin{bmatrix}
  \left(2 \mu  \left(\widetilde{k}_p^2+k_s^2+\widetilde{k}_s^2\right)-3
   \omega^2\right) \I&   \left(3 k_p^2(\lambda +2 \mu)+2 \mu
   \left(\widetilde{k}_p^2-k_s^2+\widetilde{k}_s^2\right)\right)\HH \\
    \mu  \left(2 k_p^2-2 \widetilde{k}_p^2-3 k_s^2-2
   \widetilde{k}_s^2\right) \HH& \mu  \left(2 k_p^2+2 \widetilde{k}_p^2-3
   k_s^2+2 \widetilde{k}_s^2\right) \I
\end{bmatrix}
+\mathrm{OPS}(-1)
         \end{aligned}
\]
The principal symbol is invertible since its {\em determinant} is different from zero (see Lemma below).
 \end{proof}

 \begin{lemma}
Let
\[
 A_2 =\frac14\begin{bmatrix}
   2 \mu  \left(\widetilde{k}_p^2+k_s^2+\widetilde{k}_s^2\right)-3
   \omega^2  &   \left(3 \omega^2+2 \mu
   \left(\widetilde{k}_p^2-k_s^2+\widetilde{k}_s^2\right)\right)i \\
    \mu  \left(2 k_p^2-2 \widetilde{k}_p^2-3 k_s^2-2
   \widetilde{k}_s^2\right)i& \mu  \left(2 k_p^2+2 \widetilde{k}_p^2-3
   k_s^2+2 \widetilde{k}_s^2\right)
\end{bmatrix}.
\]
Then
\[
 \det A_2 = -\frac{1}{4} \mu  \left(\widetilde{k}_p^2+\widetilde{k}_s^2\right) \left(k_p^2 (3
   \lambda +4 \mu )+k_s^2 \mu \right).
\]
 \end{lemma}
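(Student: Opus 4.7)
The plan is to mimic the trick used for the matrix $A$ in the preceding lemma, namely identify an eigenvector with known eigenvalue and then read off the second eigenvalue from the trace. First I would verify that $(i,1)^\top$ is an eigenvector of $A_2$. Multiplying out the first row and using the factor $1/4$,
\[
\tfrac14\bigl(a_{11}\cdot i+a_{12}\bigr)
= \tfrac{i}{4}\bigl[2\mu(\widetilde{k}_p^2+k_s^2+\widetilde{k}_s^2)-3k_p^2(\lambda+2\mu)+3k_p^2(\lambda+2\mu)+2\mu(\widetilde{k}_p^2-k_s^2+\widetilde{k}_s^2)\bigr]
= i\mu(\widetilde{k}_p^2+\widetilde{k}_s^2),
\]
and analogously for the second row the imaginary unit supplied by $a_{21}$ combines with the $i$ in $(i,1)^\top$ to give, after cancellation of the $k_p^2$ and $k_s^2$ terms, exactly $\mu(\widetilde{k}_p^2+\widetilde{k}_s^2)$. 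Thus $\lambda_1:=\mu(\widetilde{k}_p^2+\widetilde{k}_s^2)$ is an eigenvalue of $A_2$.

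Since $A_2$ is $2\times 2$, the remaining eigenvalue is $\lambda_2=\mathop{\rm Tr}(A_2)-\lambda_1$. A direct computation of the trace gives
\[
\mathop{\rm Tr}(A_2)=\tfrac14\bigl[4\mu(\widetilde{k}_p^2+\widetilde{k}_s^2)-\mu k_s^2-k_p^2(3\lambda+4\mu)\bigr],
\]
where the $k_p^2$ coefficients combine as $-3(\lambda+2\mu)+2\mu=-(3\lambda+4\mu)$ and the $k_s^2$ coefficients as $2\mu-3\mu=-\mu$. Subtracting $\lambda_1$ cancels the $\mu(\widetilde{k}_p^2+\widetilde{k}_s^2)$ contribution and leaves
\[
\lambda_2=-\tfrac14\bigl(k_p^2(3\lambda+4\mu)+k_s^2\mu\bigr).
\]
Multiplying $\lambda_1\lambda_2$ yields the claimed value of $\det A_2$.

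The computation is entirely algebraic; the only step that requires any care is the cancellation in the eigenvector check, and in particular the observation that in the second component the $(a_{21}\cdot i)$ contribution reverses the sign of the $k_p^2$ and $k_s^2$ coefficients so that they cancel with those in $a_{22}$ and only the $\widetilde{k}_p^2,\widetilde{k}_s^2$ terms survive. Once this is noticed the rest is bookkeeping and there is no real obstacle.
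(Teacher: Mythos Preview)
Your proof is correct and follows exactly the paper's approach: verify that $(i,1)^\top$ is an eigenvector of $A_2$ with eigenvalue $\mu(\widetilde{k}_p^2+\widetilde{k}_s^2)$, then obtain the determinant from $\det A_2=\lambda_1(\mathop{\rm Tr}(A_2)-\lambda_1)$. The intermediate trace computation you include is more detailed than what the paper writes, but the argument is identical.
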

 \begin{proof}
 Clearly
  \[
   A_2\begin{bmatrix}
       i\\
       1
      \end{bmatrix}
      = \mu \left(\widetilde{k}_p^2+\widetilde{k}_s^2\right)\begin{bmatrix}
       i\\
       1
      \end{bmatrix}.
  \]
The relationship between the eigenvalues, determinant and trace of the matrix  implies then
\[
 \det A_2 = \mu \left(\widetilde{k}_p^2+\widetilde{k}_s^2\right)\left(\mathop{\rm Tr}(A_2)-\mu \left(\widetilde{k}_p^2+\widetilde{k}_s^2\right)\right).
 \]
 \end{proof}

 The arguments in the proof of Theorem~\ref{thm_inv} carry over in the Neumann case and we have
 \begin{theorem} The operators $
  {\cal B}^{\rm comb}
{\cal R}_{\rm N}$ and ${\cal B}^{\rm comb}
 {\cal R}_{{\rm N},1}$
 associated to CFIER formulations considered above are invertible pseudodifferential   operators of order zero.  Therefore the CFIER formulations are well posed.
 \end{theorem}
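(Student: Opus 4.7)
The plan is to follow the blueprint of Theorem~\ref{thm_inv} from the Dirichlet case. By the two lemmas immediately preceding, both composite operators $\mathcal{B}^{\rm comb}\mathcal{R}_{\rm N}$ and $\mathcal{B}^{\rm comb}\mathcal{R}_{{\rm N},1}$ have been shown to equal an invertible pseudodifferential operator of order zero modulo a remainder in $\mathrm{OPS}(-1)$. Since that remainder is compact on the Sobolev products $H^s(\Gamma)\times H^s(\Gamma)$, the Fredholm alternative reduces the theorem to proving injectivity of each of the two operators.

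I would then split the injectivity argument in two. First, each regularizer $\mathcal{R}_{\rm N}$ and $\mathcal{R}_{{\rm N},1}$ is a Fourier multiplier whose matrix symbol has non-zero determinant at every frequency $n$: for $\mathcal{R}_{\rm N}$ this is guaranteed by the construction of $\Delta(n)$ in~\eqref{eq:Rn2} (the denominator has strictly negative imaginary part thanks to the complexifications $\widetilde{k}_p$ and $\widetilde{k}_s$), while for $\mathcal{R}_{{\rm N},1}$ a direct computation of the determinant of the symbol in~\eqref{eq:RN1} gives an expression of the form $-1 - n^2\bigl[(n^2-\widetilde{k}_p^2)(n^2-\widetilde{k}_s^2)\bigr]^{-1/2}$ which cannot vanish since the principal square roots avoid the negative real axis. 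Hence both regularizers are injective, and injectivity of the composition reduces to injectivity of $\mathcal{B}^{\rm comb}$.

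To establish injectivity of $\mathcal{B}^{\rm comb}$ I would transcribe the Dirichlet strategy of Theorem~\ref{CFIED} and Theorem~\ref{thm_inv}. Given $(\varphi_p,\varphi_s)\in\mathop{\rm Ker}(\mathcal{B}^{\rm comb})$, I build the scalar Helmholtz fields $u_p := \DL_p\varphi_p - 2\SL_p\W_{\widetilde{k}_p}\varphi_p$ and $u_s := \DL_s\varphi_s - 2\SL_s\W_{\widetilde{k}_s}\varphi_s$, and assemble the elastic field $\mathbf{u} := \nabla u_p + \vcurl u_s$. Then $\mathbf{u}$ solves the Navier equation in $\Omega^+$ with the Kupradze radiation condition, and the kernel assumption amounts exactly, via~\eqref{eq:NH}, to the homogeneous Neumann data $T\mathbf{u}\cdot\nnn=T\mathbf{u}\cdot\ttt=0$ on $\Gamma$. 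Exterior uniqueness for the Neumann Navier problem forces $\mathbf{u}\equiv 0$ in $\Omega^+$; taking divergence and curl (so that each of $u_p,u_s$ is a radiative scalar Helmholtz solution in $\Omega^+$ with vanishing Dirichlet trace on $\Gamma$) then gives $u_p=u_s=0$ in $\Omega^+$.

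Finally, the Helmholtz jump relations~\eqref{eq:jumps} convert these vanishings into the interior impedance-type conditions $\partial_{\nnn}^- u_p - 2\W_{\widetilde{k}_p}\gamma^- u_p = 0$ and analogously for $u_s$ on $\Gamma$. Testing against $\overline{\gamma^- u_p}$ and $\overline{\gamma^- u_s}$, invoking Green's identity in $\Omega$ and exploiting the coercivity property $\Im\int_\Gamma \W_{\widetilde{k}}f\,\overline{f}>0$ for $\Im\widetilde{k}>0$ and $f\ne 0$---the same tool used in Theorem~\ref{thm_inv}---pins down $\gamma^- u_p=\gamma^- u_s=0$, hence $u_p=u_s=0$ in $\Omega$, and the jumps of the densities across $\Gamma$ are therefore zero, i.e.\ $\varphi_p=\varphi_s=0$. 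The step I expect to be the most delicate is the decoupling passage from $\mathbf{u}\equiv 0$ in $\Omega^+$ to the separate vanishing of the scalar potentials $u_p,u_s$: one has to confirm that each potential inherits the right radiation behaviour for its own wave-number so that scalar exterior Helmholtz uniqueness applies, but this is precisely what the div/curl identities together with the Helmholtz decomposition~\eqref{eq:Hdecomp1} secure.
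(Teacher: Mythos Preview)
Your proposal follows essentially the same route as the paper's own proof, which explicitly defers to the blueprint of Theorem~\ref{thm_inv}: Fredholmness from the preceding lemmas, injectivity of the regularizers, and then injectivity of $\mathcal{B}^{\rm comb}$ via exterior Neumann uniqueness for Navier together with the coercivity of $\W_{\widetilde k}$.

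Two small slips are worth correcting. First, the determinant of the symbol $\mathcal{R}_{{\rm N},1}(n)$ is $-1+n^2\bigl[(n^2-\widetilde{k}_p^2)(n^2-\widetilde{k}_s^2)\bigr]^{-1/2}$ (plus, not minus), so the non-vanishing argument must rely on the complexification of $\widetilde{k}_p,\widetilde{k}_s$ rather than merely on the branch of the square root. Second, the decoupling from $\mathbf{u}\equiv 0$ in $\Omega^+$ to $u_p=u_s=0$ does not go through a ``vanishing Dirichlet trace'' for $u_p$ and $u_s$ (which you do not have); rather, as in Theorem~\ref{CFIED}, it is algebraic: $\divv\mathbf{u}=0$ gives $\Delta u_p=0$, and combined with $(\Delta+k_p^2)u_p=0$ this forces $u_p\equiv 0$ in $\Omega^+$, whence $\vcurl u_s=0$ and the same reasoning yields $u_s\equiv 0$.
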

 \begin{proof} Indeed, the uniqueness of the solution of the elastic scattering equation in $\Omega^+$ with Neumann boundary conditions on $\Gamma$, together with the coercivity of the operators $\W_{\widetilde{k}_p},\ \W_{\widetilde{k}_s}$ and the invertibility of the operators ${\cal R}_{\rm N}$ and $ {\cal R}_{{\rm N},1}$ are used in the same manner as in the proof of Theorem~\ref{thm_inv} to conclude the injectivity of the Neumann CFIER operators. Given that we have already established the Fredholmness of these operators, the proof is complete.
 \end{proof}

 Finally, it is also possible to construct regularizing operators that involve Helmholtz BIOs only. Indeed, it is straightforward to see that the operator
 \begin{equation}\label{eq:RN2}
 {\cal R}_{{\rm N},2}:=
 \begin{bmatrix}
 \I & 2\partial_{\ttt} \V_{\widetilde{k}_s}\\
 2\partial_{\ttt} \V_{\widetilde{k}_p} & -\I
 \end{bmatrix}
 \end{equation}
 has the property
 \[
 {\cal R}_{{\rm N},2}- {\cal R}_{{\rm N},1}\in \mathrm{OPS}(-4).
 \]
 Therefore, the CFIER formulations based on the newly defined operator ${\cal R}_{{\rm N},2}$ are Fredholm. Their well posedness is a consequence of the following
 \begin{lemma} The operator ${\cal R}_{{\rm N},2}$ is injective.
 \end{lemma}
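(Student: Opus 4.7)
My plan is to assume $(\varphi_p,\varphi_s)\in\mathop{\rm Ker}(\mathcal{R}_{{\rm N},2})$, i.e.,
\[
\varphi_p+2\partial_{\ttt}\V_{\widetilde{k}_s}\varphi_s=0,\qquad
-\varphi_s+2\partial_{\ttt}\V_{\widetilde{k}_p}\varphi_p=0,
\]
and then mirror the strategy that proved $\mathcal{R}_{{\rm D},2}$ injective: extract a scalar identity that isolates the coercive quadratic forms $\int_\Gamma\varphi\,\overline{\V_{\widetilde{k}}\varphi}\,\mathrm{d}s$. The first step will be to take the $L^2(\Gamma)$ pairing of the first equation with $\V_{\widetilde{k}_p}\varphi_p$, integrate by parts on the closed curve $\Gamma$ so that the tangential derivative moves off $\V_{\widetilde{k}_s}\varphi_s$, and then substitute the second equation rewritten as $\partial_{\ttt}\V_{\widetilde{k}_p}\varphi_p=\tfrac12\varphi_s$. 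The cross term should collapse and leave
\[
\int_\Gamma \varphi_p\,\overline{\V_{\widetilde{k}_p}\varphi_p}\,\mathrm{d}s
\ =\
\int_\Gamma \V_{\widetilde{k}_s}\varphi_s\,\overline{\varphi_s}\,\mathrm{d}s.
\]

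Next I will note that the right-hand side equals the complex conjugate of $\int_\Gamma\varphi_s\,\overline{\V_{\widetilde{k}_s}\varphi_s}\,\mathrm{d}s$, so on taking imaginary parts the identity becomes
\[
\Im\int_\Gamma \varphi_p\,\overline{\V_{\widetilde{k}_p}\varphi_p}\,\mathrm{d}s
\ =\
-\Im\int_\Gamma \varphi_s\,\overline{\V_{\widetilde{k}_s}\varphi_s}\,\mathrm{d}s.
\]
To close the argument I invoke the single-layer analog of the $\W_{\widetilde{k}}$-coercivity used in Theorem~\ref{thm_inv}: for $\Im\widetilde{k}>0$ and $\varphi\neq 0$ the quantity $\Im\int_\Gamma \varphi\,\overline{\V_{\widetilde{k}}\varphi}\,\mathrm{d}s$ has a fixed, strictly nonzero sign. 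This is established by applying Green's first identity to $u=\SL_{\widetilde{k}}\varphi$ separately in $\Omega^-$ and $\Omega^+$ (exploiting the exponential decay of $u$ at infinity that $\Im\widetilde{k}>0$ provides) and combining the two identities via the jump relation $[\partial_{\nnn}\SL_{\widetilde{k}}\varphi]=\varphi$. Since both sides of the displayed equality must have the same sign whenever $\varphi_p$, respectively $\varphi_s$, is nonzero, while the equation itself forces them to be opposite, both imaginary parts must vanish; hence $\varphi_p\equiv 0 \equiv \varphi_s$, completing the proof.

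The main point that needs care is whether the $\V_{\widetilde{k}}$-coercivity is readily available as a cited result or whether the short self-contained Green's-identity verification just sketched has to be supplied inline. The algebraic reduction itself — the choice of test function $\V_{\widetilde{k}_p}\varphi_p$, a single integration by parts and one substitution — is completely routine once one recognizes that it is essentially the adjoint manipulation to the one carried out for $\mathcal{R}_{{\rm D},2}$, with the roles of $\partial_{\ttt}$ and $\V_{\widetilde{k}}$ interchanged in the pairing.
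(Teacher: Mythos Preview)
Your proof is correct and follows the same overall strategy as the paper --- pair the two equations so as to isolate the coercive quadratic forms $\Im\int_\Gamma \varphi\,\overline{\V_{\widetilde{k}}\varphi}$, then invoke the strict sign of these forms for $\Im\widetilde{k}>0$ to force $\varphi_p=\varphi_s=0$.

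The only difference is in the choice of test function. The paper first notes that integrating each kernel equation over $\Gamma$ gives $\int_\Gamma f=\int_\Gamma g=0$, introduces antiderivatives $F,G$ with $\partial_{\ttt}F=f$, $\partial_{\ttt}G=g$, and then pairs the equations with $\overline{F}$ and $\overline{G}$; a single integration by parts converts $\overline{F}\,\partial_{\ttt}\V_{\widetilde{k}_p}f$ into $-\overline{f}\,\V_{\widetilde{k}_p}f$ (and similarly for $g$), yielding the coercivity identity. Your route pairs the first equation directly with $\overline{\V_{\widetilde{k}_p}\varphi_p}$ and substitutes the second equation after integrating by parts, which is slightly more economical since it bypasses the antiderivative construction altogether. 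Both arguments rely on the same key ingredient, the strict sign of $\Im\int_\Gamma \varphi\,\overline{\V_{\widetilde{k}}\varphi}$ for complexified wave-number, and your sketch of its Green's-identity justification is the standard one.
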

 \begin{proof}
 Assume $(f, g)\in \mathop{\rm Ker}(\mathcal{R}_{N,2})$, that is
\begin{eqnarray*}
f+2\partial_{\ttt} \V_{\widetilde{k}_s} [g]&=&0\\
2\partial_{\ttt} \V_{\widetilde{k}_p}[f] -g&=&0.
\end{eqnarray*}
First, we remark that by integrating each of the equations above on $\Gamma$ we obtain $\int_\Gamma f =\int_\Gamma g =0$. Therefore there exist two functional densities $F$ and $G$ on $\Gamma$ such that
\[
\partial_{\ttt} F = f\qquad \partial_{\ttt} G=g.
\]
We have then, after integration by parts,
\[
-2\int_\Gamma \overline{f}\V_{\widetilde{k}_p}[f] -
2\int_\Gamma \overline{g}\V_{\widetilde{k}_s}[g] -\left(
\int_\Gamma  F\overline{g}+\int_\Gamma  g\overline{F}\right)   =0.
\]

Taking the imaginary part of the last equation we obtain
\[
\Im   \int_\Gamma \overline{f}\V_{\widetilde{k}_p}[f]  + \Im \int_\Gamma \overline{g}\V_{\widetilde{k}_s}[g]=0
\]
but both terms are positive for $f,g\ne 0$. Hence we conclude $f=0$ and $g=0$, which completes the proof.
 \end{proof}

 \begin{theorem} The operator $
  {\cal B}^{\rm comb}
{\cal R}_{{\rm N},2}$
 is an invertible pseudodifferential operators of order zero. Therefore the CFIER \added[id=catB]{formulation} is well posed.
 \end{theorem}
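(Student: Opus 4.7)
The plan is to reduce the invertibility of $\mathcal{B}^{\rm comb}\mathcal{R}_{{\rm N},2}$ to two tasks that have essentially been prepared: a Fredholm reduction by principal-symbol comparison with the operator of the preceding lemma, and an injectivity argument that mimics the proof of Theorem~\ref{thm_inv} with the uniqueness of the exterior Navier Neumann problem in place of the Dirichlet one.

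For the Fredholm part, I would observe that since $\mathcal{B}^{\rm comb}\in\mathrm{OPS}(2)$ and $\mathcal{R}_{{\rm N},2}-\mathcal{R}_{{\rm N},1}\in\mathrm{OPS}(-4)$, the difference $\mathcal{B}^{\rm comb}(\mathcal{R}_{{\rm N},2}-\mathcal{R}_{{\rm N},1})$ is of order $-2$ and therefore compact on each $H^s(\Gamma)\times H^s(\Gamma)$. Hence $\mathcal{B}^{\rm comb}\mathcal{R}_{{\rm N},2}$ shares, modulo compact operators, the principal part of $\mathcal{B}^{\rm comb}\mathcal{R}_{{\rm N},1}$ displayed in \eqref{mainR}, and that principal part was already shown in the preceding lemma to be an invertible order-zero operator. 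Thus $\mathcal{B}^{\rm comb}\mathcal{R}_{{\rm N},2}$ is a compact perturbation of an invertible order-zero pseudodifferential operator, and the Fredholm alternative reduces invertibility to injectivity.

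For injectivity, I would suppose $(g^R_p,g^R_s)\in\mathop{\rm Ker}(\mathcal{B}^{\rm comb}\mathcal{R}_{{\rm N},2})$ and set $(\varphi_p,\varphi_s):=\mathcal{R}_{{\rm N},2}(g^R_p,g^R_s)^\top$, so that $\mathcal{B}^{\rm comb}(\varphi_p,\varphi_s)^\top=0$. Define the Helmholtz fields
\[
u_p:=\DL_p\varphi_p-2\,\SL_p\W_{\widetilde{k}_p}\varphi_p,\qquad u_s:=\DL_s\varphi_s-2\,\SL_s\W_{\widetilde{k}_s}\varphi_s
\]
in $\mathbb{R}^2\setminus\Gamma$ and the elastic field $\mathbf{u}:=\nabla u_p+\vcurl u_s$. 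By construction, the identity $\mathcal{B}^{\rm comb}(\varphi_p,\varphi_s)^\top=0$ is precisely the Neumann trace condition $T\mathbf{u}=0$ on $\Gamma$, so the uniqueness of the exterior Neumann Navier problem yields $\mathbf{u}=0$ in $\Omega^+$. The Helmholtz-decomposition argument already carried out in Theorem~\ref{CFIED} (using $\divv\mathbf{u}=0$ and $\curl\mathbf{u}=0$ together with the Helmholtz equations satisfied by $u_p$ and $u_s$) then gives $u_p=u_s=0$ in $\Omega^+$. The jump relations \eqref{eq:jumps} applied to the chosen representations reduce to the Robin-type boundary identities
\[
\partial_{\nnn}^-u_p=2\W_{\widetilde{k}_p}\gamma^-u_p,\qquad \partial_{\nnn}^-u_s=2\W_{\widetilde{k}_s}\gamma^-u_s\qquad\text{on }\Gamma,
\]
and combining Green's identity in $\Omega$ with the coercivity $\Im\int_\Gamma\W_{\widetilde{k}}f\,\overline{f}>0$ (valid for $\Im\widetilde{k}>0$, applied with $\widetilde{k}_p$ and $\widetilde{k}_s$) forces $\gamma^-u_p=\gamma^-u_s=0$, and hence $\varphi_p=\varphi_s=0$. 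The injectivity of $\mathcal{R}_{{\rm N},2}$, established in the preceding lemma, finally yields $g^R_p=g^R_s=0$.

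The main obstacle is conceptual rather than computational: one must verify carefully that the combined-field representation chosen above is exactly the one whose boundary residual is $\mathcal{B}^{\rm comb}(\varphi_p,\varphi_s)^\top$ (this requires unwinding the definition \eqref{eq:Bcomb} of $\mathcal{B}^{\rm comb}$ in terms of $\mathcal{B}_{\rm DL}$, $\mathcal{B}_{\rm SL}$ and the approximate DtN operators $2\W_{\widetilde{k}_p}$, $2\W_{\widetilde{k}_s}$), and that the exterior Neumann Navier uniqueness is applicable to the radiative field $\mathbf{u}$ so constructed. Everything else is either symbol bookkeeping already performed in the preceding lemmas or a direct transcription of the Dirichlet argument used in Theorem~\ref{thm_inv}.
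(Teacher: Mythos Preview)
Your proposal is correct and follows precisely the approach the paper intends: the paper states this theorem without proof, having already noted that $\mathcal{R}_{{\rm N},2}-\mathcal{R}_{{\rm N},1}\in\mathrm{OPS}(-4)$ gives the Fredholm property, established the injectivity of $\mathcal{R}_{{\rm N},2}$ in the immediately preceding lemma, and indicated (in the proof of the analogous theorem for $\mathcal{R}_{\rm N}$ and $\mathcal{R}_{{\rm N},1}$) that the injectivity of $\mathcal{B}^{\rm comb}$ is obtained exactly as in Theorem~\ref{thm_inv} with exterior Neumann uniqueness replacing Dirichlet uniqueness. Your write-up simply makes these implicit steps explicit.
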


\section{Numerical results}\label{NR}

In this brief section we check some of the robust formulations proposed in this paper.
It is rather straightforward to extend the Nystr\"om discretization based on global trigonometric interpolation and Kussmaul-Martensen singularity splittings for the four Helmholtz BIOs~\cite{KressH} to Nystr\"om discretizations of the BIOs introduced in Sections~\ref{dirP} and~\ref{neuP}. These extensions were described already in our previous contribution~\cite{dominguez2022nystrom}. In addition, the Fourier multipliers required by the various regularizing operators considered in this paper are easy to discretize using global trigonometric interpolation. We present in this section numerical results concerning the iterative behavior of solvers based on the Nystr\"om discretization of CFIE and CFIER  Helmholtz decomposition formulations using GMRES~\cite{SaadSchultz} iterative solvers. Specifically, we report numbers of \added[id=catR]{unrestarted} GMRES iterations required by various formulations to reach GMRES residuals of $10^{-5}$ for discretizations sizes that give rise to results accurate to at least four digits in the far field (which was estimated using reference solutions produced with  the high-order Nystr\"om solvers based on Navier Green functions~\cite{dominguez2021boundary}).

In all the numerical experiments in this section we assumed  plane wave incident fields of the form
\begin{equation}\label{eq:plwave}
  {\bf u}^{\rm inc}(\x)=\frac{1}{\mu}e^{ik_s\x\cdot{\bm d}}({\bm d}\cdot\mathrm{Q}{\bm p}){\bm d}+\frac{1}{\lambda+2\mu}e^{ik_p\x\cdot{\bm d}}({\bm d}\cdot{\bm p}){\bm d}
\end{equation}
where the direction ${\bm d}$ has unit length $|{\bm d}|=1$.  Specifically, we considered plane waves of direction ${\bm d}=\begin{bmatrix}0 & -1\end{bmatrix}^\top$ and ${\bm p}=\begin{bmatrix}1 &0\end{bmatrix}^\top$ in all of our numerical experiments,  and thus the incident wave is a shear S-wave. We observed that other choices of the direction ${\bm d}$ and of the vector ${\bm p}$ (including cases when ${\bm p}=\pm{\bm d}$---the incident plane is then a pressure wave or P-wave) lead to virtually identical results.  In all the numerical experiments we considered Lam\'e constants $\lambda=2$ and $\mu=1$ so that $k_s=2k_p$.

We present numerical results for three smooth scatterer shapes. Namely, a unit circle, the {\em kite}  parametrized by
\[
{\bf x}_2(t)=\frac{1}{L_1}(\cos{t}+0.65\cos{2t}-0.65, 1.5 \sin{t}),\
\]
and a smooth cavity whose parametrization is given by
 \[
{\bf x}_3(t) = \frac{1}{L_2}( 12\cos t+24 \cos 2 t,28 \sin t+17 \sin 2 t+18 \sin 3
   t-2 \sin 4 t)                                                                                                                                                                                                                                                       \]

The constants $L_1\approx 1.17145$ and $L_2\approx 56.2295$ are taken so that the length of the three curves is $2\pi$.
 In order to be consistent with the derivation of regularizing operators in Sections~\ref{dirP} and~\ref{neuP}, we used in our numerical experiments arc length parametrizations of the three shapes. For the CFIE formulations based on the representations~\eqref{eq:comb_field} we selected the coupling parameter $k=k_p$. for both the Dirichlet and Neumann case; we have found in practice that other choices of the coupling parameter such as $k=k_s$, or even the classical representations in equation~\eqref{remarkCFIED} lead to similar iterative behaviors for CFIE formulations. The CFIE formulations are of the first kind for both Dirichlet and Neumann cases since the operators ${\cal A}_{\rm CFIE}\in \mathrm{OPS}(1)$ and ${\cal B}_{\rm CFIE}\in \mathrm{OPS}(2)$, and thus the numbers of GMRES iterations required by CFIE formulations increase with the discretization size. The increase of the numbers of GMRES iterations with respect to discretization size is more dramatic for Neumann boundary conditions (i.e. for CFIE based on the operator ${\cal B}_{\rm CFIE}$---see Figure~\ref{fig:mysecond2} where we can see almost a linear growth with respect to the discretization size) but quite mild for Dirichlet boundary conditions (i.e. for CFIE based on the operator ${\cal A}_{\rm CFIE}$). In the Dirichlet case, and in the light of the result in Theorem~\ref{CFIED},  an obvious preconditioned CFIE formulation takes advantage of the fact that ${\cal A}_{\rm CFIE}^2\in \mathrm{OPS}(0)$. Unfortunately, this preconditioning strategies leads only to modest gains in numbers of iterations (i.e. about 20\%) despite the resulting formulation being of the second kind cf. Theorem~\ref{CFIED}.

 The CFIER formulations, on the other hand, are second kind integral equations for both types of boundary conditions. We present in Figure~\ref{fig:mysecond} and~\ref{fig:mysecond2} high frequency results based on CFIER formulations with regularizing operators ${\cal R}_{\rm D}$ defined in equations~\eqref{eq:sqR} in the Dirichlet case and respectively operators ${\cal R}_{\rm N}$ defined in equations~\eqref{eq:Rn2} in the Neumann case with the choice of complex wave-numbers
 \[
 \widetilde{k}_{p,s}=k_{p,s}+0.4\ i\ K^{2/3} k_{p,s}^{1/3},\qquad K=\max_\Gamma|\kappa|.
 \]

   \begin{figure}
\begin{center}
\resizebox{0.1825\textwidth}{!}{
\begin{tikzpicture}
[declare function = {f1(\x)=1; }]
\begin{axis}[
  xmin=-1.5 ,xmax=1.5 ,
  ymin=-1.5,ymax=1.5,
  grid=both,  axis equal image,
xtick=    {-2, -1,0, 1,2 },
ytick=    {-2, -1,0, 1,2 },
]
  \addplot[domain=0:360,samples=200,variable=\x,very thick, blue!70!black,data cs=polar](x,{f1(x)});%
\end{axis}
\end{tikzpicture}
}
\quad
 \resizebox{0.1825\textwidth}{!}{
\begin{tikzpicture}
\begin{axis}[
  xmin=-2 ,xmax=1.5 ,
  ymin=-1.75,ymax=1.75,
  grid=both,  axis equal image,
xtick=    {-2, -1,0, 1,2 },
ytick=    {-2, -1,0, 1,2 },
]
  \addplot[domain=0:360,samples=100,variable=\t,very thick, blue!70!black](%
    {1/1.17145*(cos( t)+0.65*cos(2* t)-0.65)},
    {1/1.17145* 1.5* sin(t)}%
  );
\end{axis}
\end{tikzpicture}
}
 \resizebox{0.1885\textwidth}{!}{
\begin{tikzpicture}
\begin{axis}[
  xmin=-0.75  ,xmax=0.75 ,
  ymin=-0.75  ,ymax=0.75,
  grid=both,  axis equal image,
xtick=    {  -.5,0,0.5 },
ytick=    {  -.5, 0,.5  },
]
  \addplot[domain=0:360,samples=100,variable=\t,very thick, blue!70!black](%
    {1/1.48396*(cos( t)+2*cos(2* t))/4},
    {1/1.48396* (sin( t)+ sin(2*t)+1/2* sin(3*t))/2-(-4* sin(t)+7*sin(2*t)-6*sin(3*t)+2*sin(4*t))/48}%
  );
\end{axis}
\end{tikzpicture}
}
\resizebox{0.187\textwidth}{!}{
\begin{tikzpicture}[
    /pgf/declare function={
        L = 6.2831/8;
    }
]
\begin{axis}[
  xmin=-1 ,xmax=1 ,
  ymin=-1,ymax=1,
  grid=both,  axis equal image,
xtick=    {-0.667, 0,0.667},
ytick=    {-0.667, 0,0.667 },
xticklabels=    {$-2/3$, 0,$2/3$},
yticklabels=    {$-2/3$, 0,$2/3$},
]
 \addplot[
    very thick, blue!70!black
    ]
    coordinates {
    (-L-0.0075,-L)(L,-L)(L,L)(-L-0.0075,L)(-L,-L)
    };
\end{axis}
\end{tikzpicture}
}
\resizebox{0.1827\textwidth}{!}{
\begin{tikzpicture}[
    /pgf/declare function={
        L = 6.2831/8;
    }
]
\begin{axis}[
  xmin=-1 ,xmax=1 ,
  ymin=-1,ymax=1,
  grid=both,  axis equal image,
xtick=    {-0.667, 0,0.667},
ytick=    {-0.667, 0,0.667 },
xticklabels=    {$-2/3$, 0,$2/3$},
yticklabels=    {$-2/3$, 0,$2/3$},
]
 \addplot[
    very thick, blue!70!black
    ]
    coordinates {
    (-L-0.0075,-L)(L,-L)(L,0)(0,0)(0,L)(-L,L)(-L,-L)
    };
\end{axis}
\end{tikzpicture}
}
\end{center}
\caption{\label{fig:geom} Geometries for the experiments considered in this section. Smooth curves: the unit circle, the kite   and   the cavity curve; Lipschitz domains: a square and the $L-$shaped  domain; Notice that all the curves are of length $2\pi$.}
\end{figure}
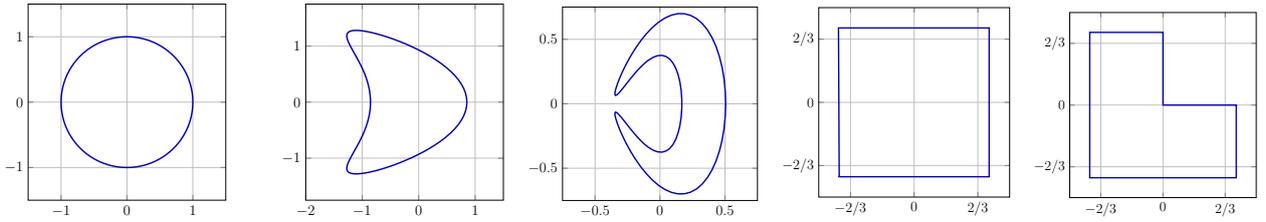

 The use of regularizing operators ${\cal R}_{{\rm D},1}$ defined in equations~\eqref{eq:qaltR} and ${\cal R}_{{\rm D},2}$ defined in equations~\eqref{eq:qaltR1} in the Dirichlet CFIER formulations leads to almost identical results and similarly for the use of regularizing operators $ {\cal R}_{{\rm N},1}$ defined in equations~\eqref{eq:RN1} and ${\cal R}_{{\rm N},2}$ defined in equations~\eqref{eq:RN2} in the Neumann CFIER formulations. Since the additional computational cost incurred by the implementation of the regularizing operators is negligible with respect to that of the CFIE operators,  we conclude that the use of CFIER formulation gives rise to significant gains in the high frequency regime.  We illustrate in Figure~\ref{fig:eig} the eigenvalue distribution corresponding to the CFIER formulations in the case of the kite geometry for the frequency $\omega=40$ and both Dirichlet and Neumann boundary conditions. We observe a very strong clustering of the eigenvalues in the Dirichlet case,  justifying the very small numbers of GMRES iterations needed for convergence (cf. Figure~\ref{fig:mysecond}). In the Neumann case the spectrum of the CFIER operator is more widespread, yet the clustering of eigenvalues around the value $1$ is still observed.

\begin{figure}
 \centering
 \includegraphics[width=0.34\textwidth]{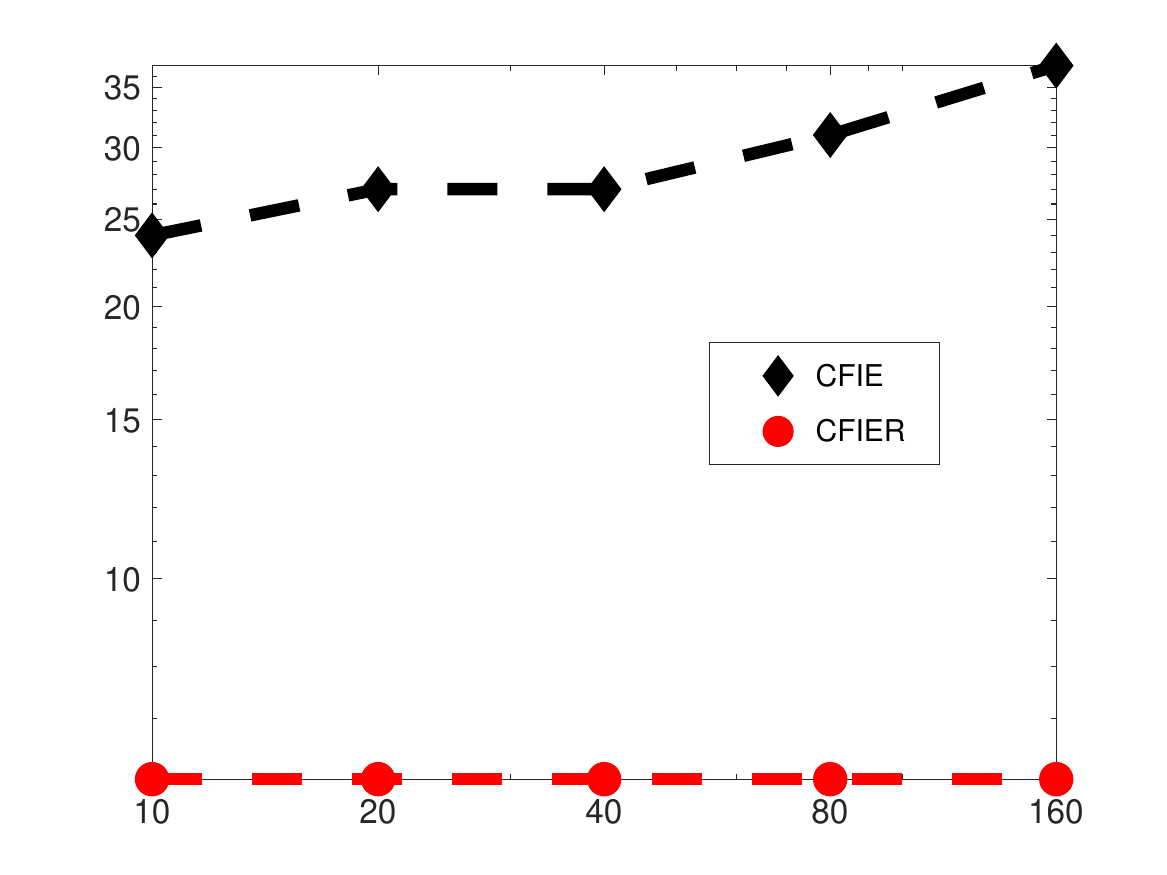}\includegraphics[width=0.34\textwidth]{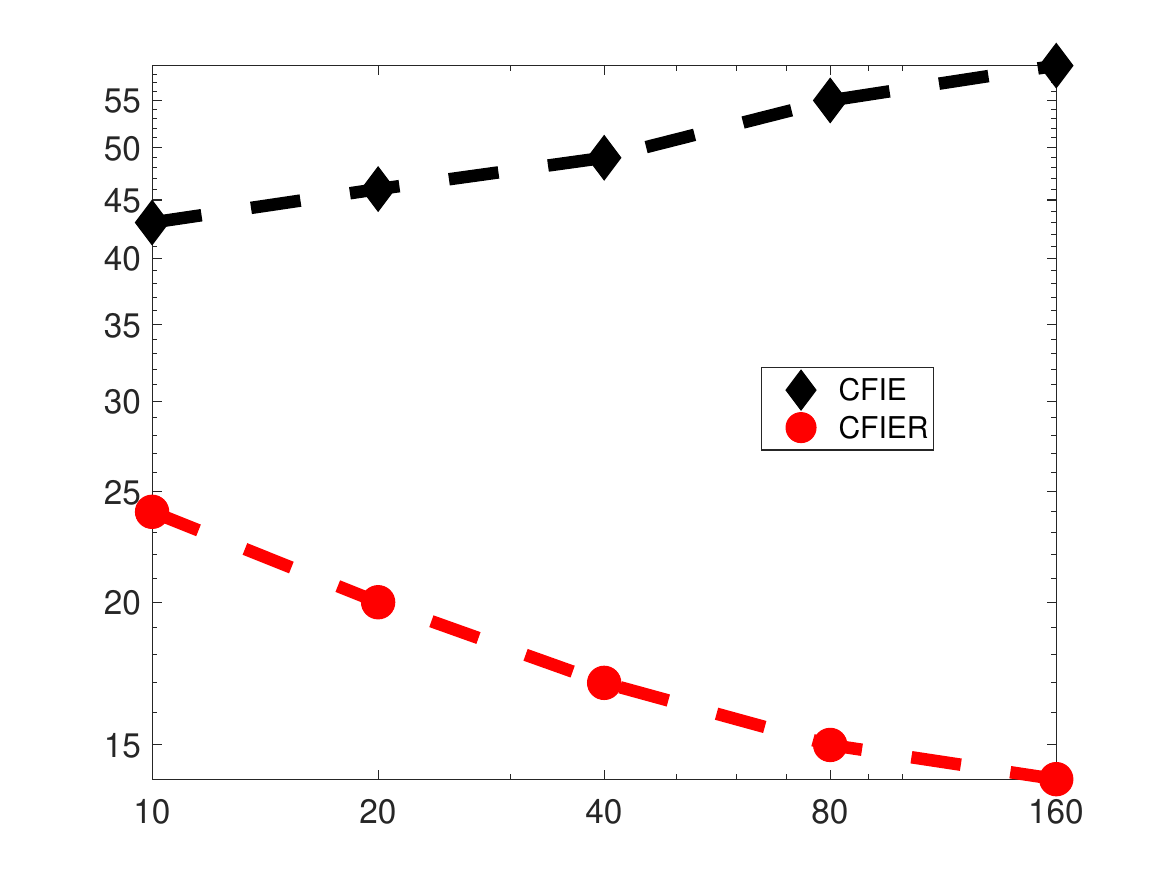}\includegraphics[width=0.34\textwidth]{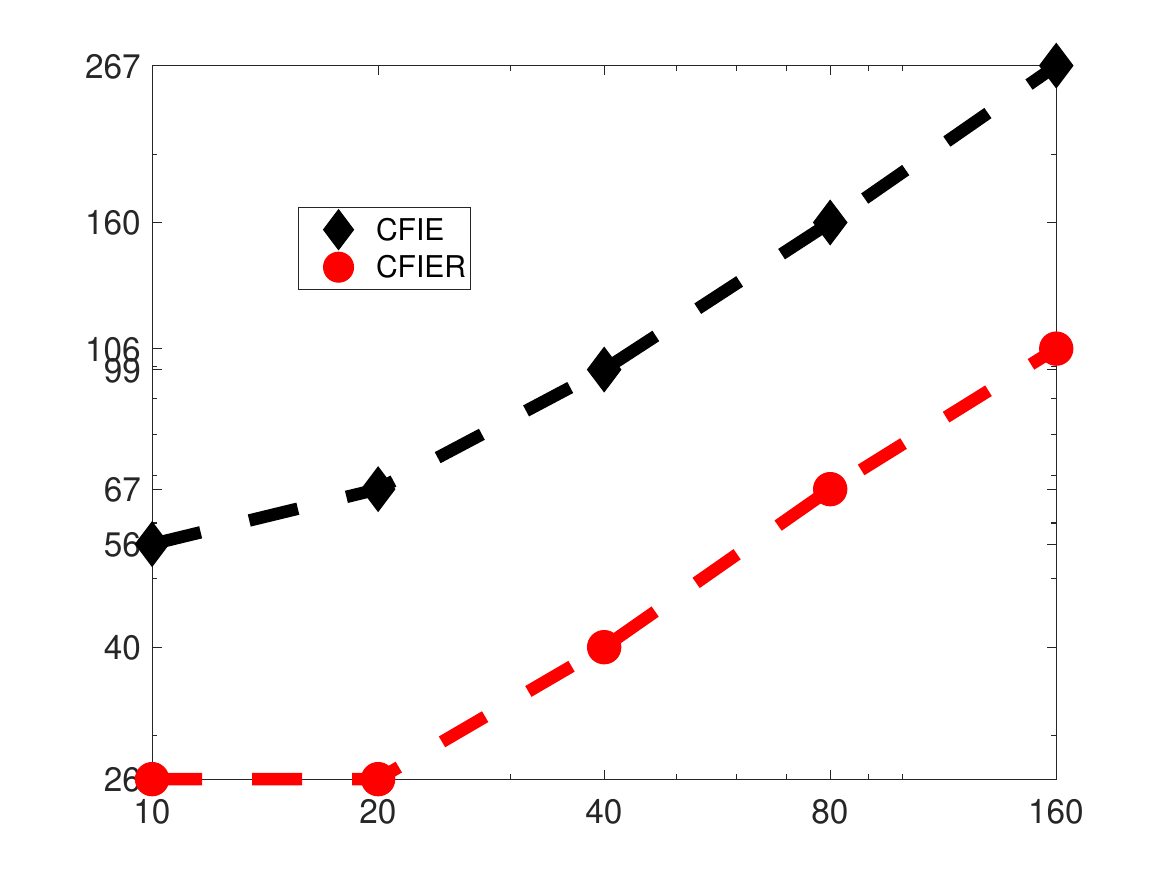}
\caption{Numbers of GMRES iterations required to reach residuals of $10^{-5}$ for the CFIE and CFIER formulations for the circle (left), kite (middle) and the smooth cavity (right) in the case of Dirichlet boundary conditions and frequencies $\omega=10,20,40,80,160$ with Lam\'e parameters $\lambda=2$ and $\mu=1$ under plane wave incidence. We  used Nystr\"om discretizations corresponding to 8 points per the shorter wavelength. The numbers of iterations are independent of the direction and polarization of the plane wave.}
 \label{fig:mysecond}
\end{figure}

\begin{figure}
 \centering
 \includegraphics[width=0.33\textwidth]{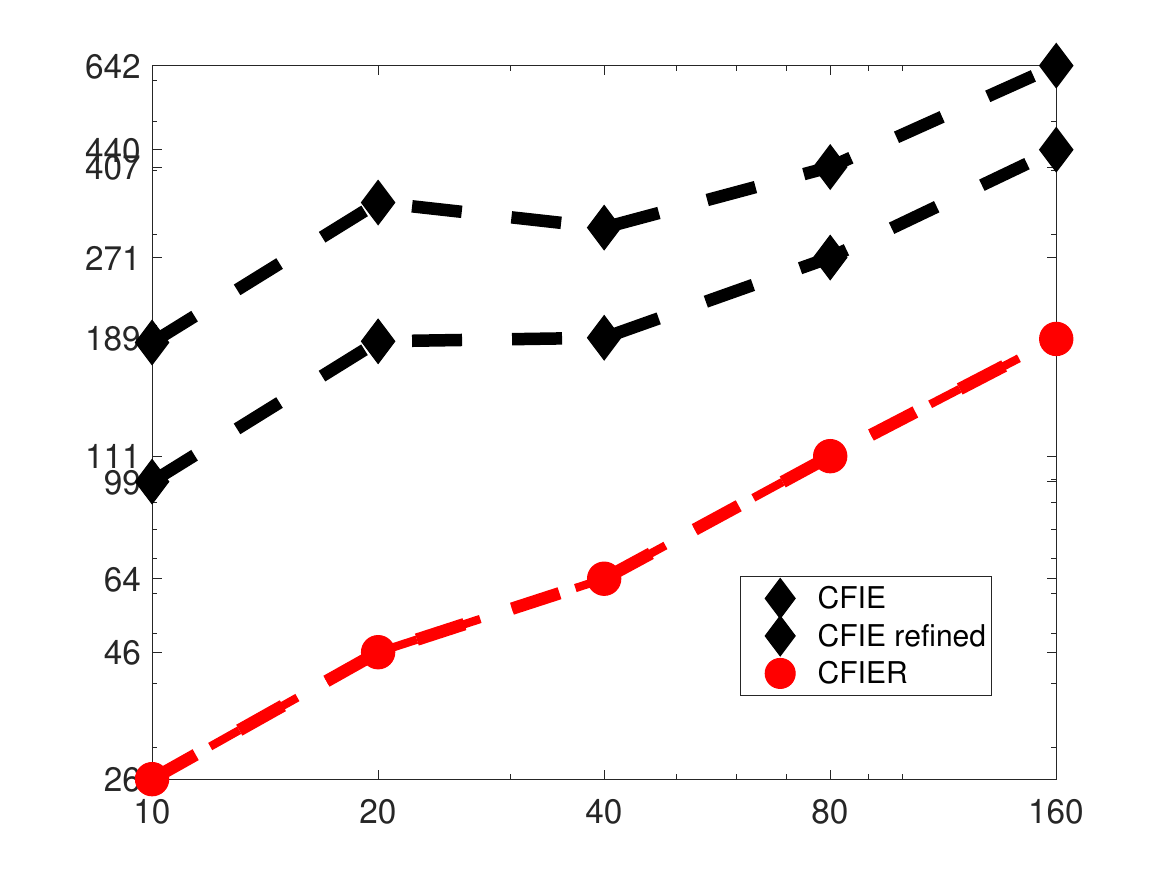}\includegraphics[width=0.33\textwidth]{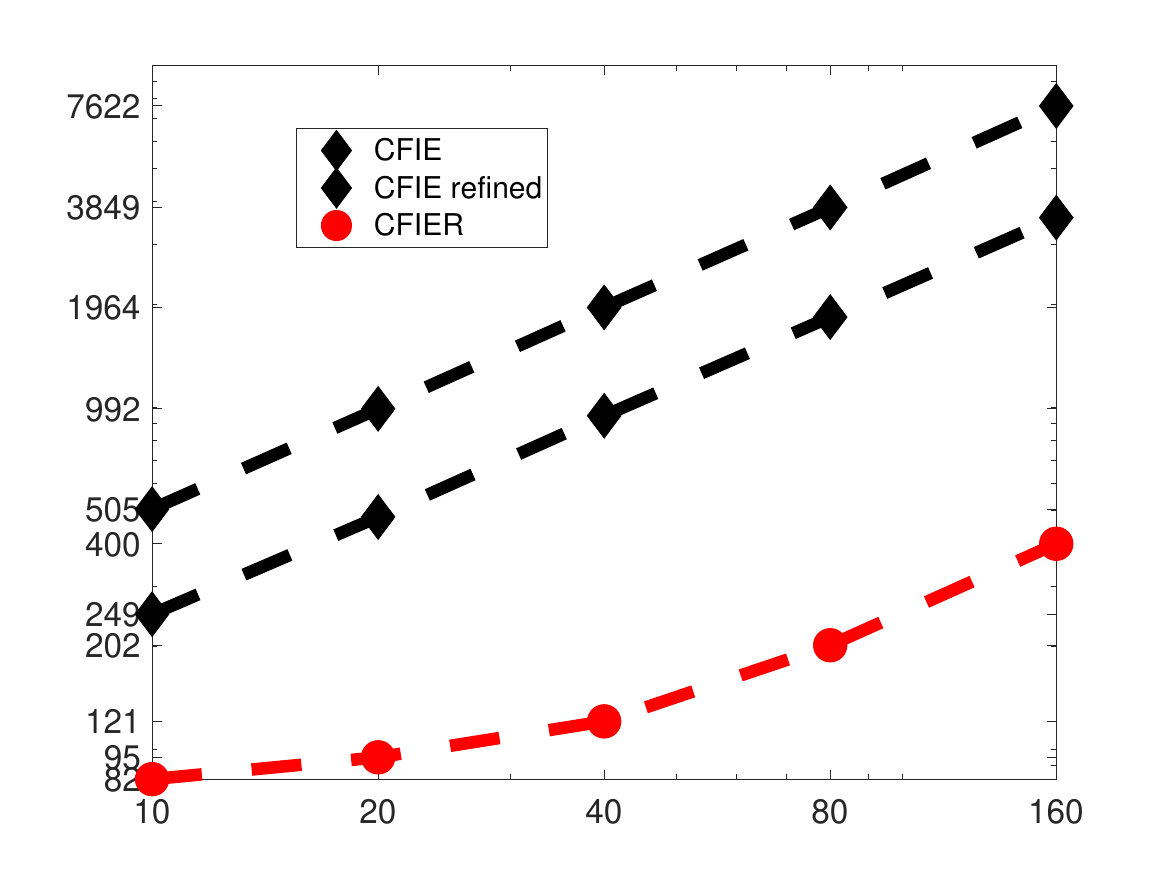}\includegraphics[width=0.33\textwidth]{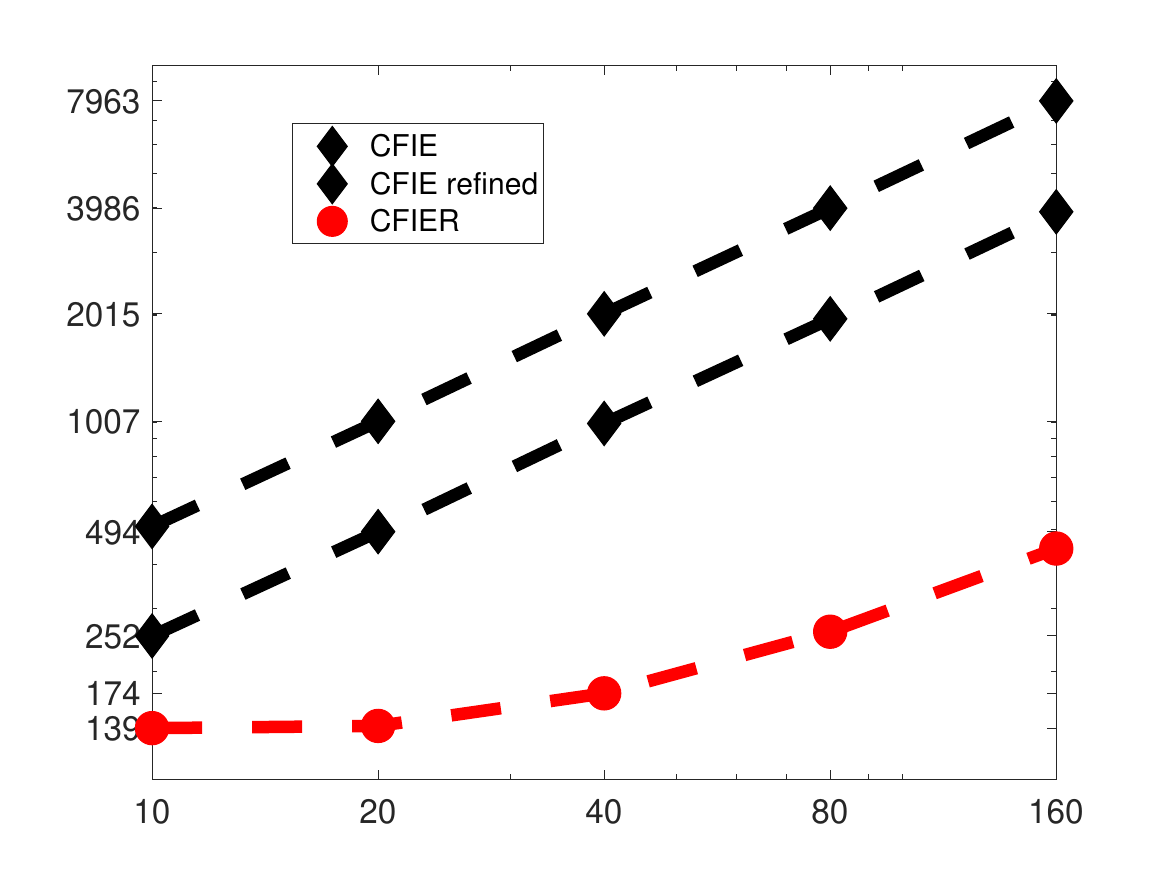}
\caption{Numbers of GMRES iterations required to reach residuals of $10^{-5}$ for the CFIE and CFIER formulations for the circle (left), kite (middle) and the smooth cavity (right) in the case of Neumann boundary conditions and frequencies $\omega=10,20,40,80,160$ with Lam\'e parameters $\lambda=2$ and $\mu=1$ under plane wave incidence. We  used Nystr\"om discretizations corresponding to 8 points per the shorter wavelength. In order to illustrate the effect of discretization size on the iterative behavior of the CFIE formulations, we report iteration counts "CFIE refined" corresponding to discretizations refined by a factor of two. The numbers of iterations are independent of the direction and polarization of the plane wave.}
 \label{fig:mysecond2}
\end{figure}

\begin{figure}
 \centering
 \includegraphics[width=0.48\textwidth]{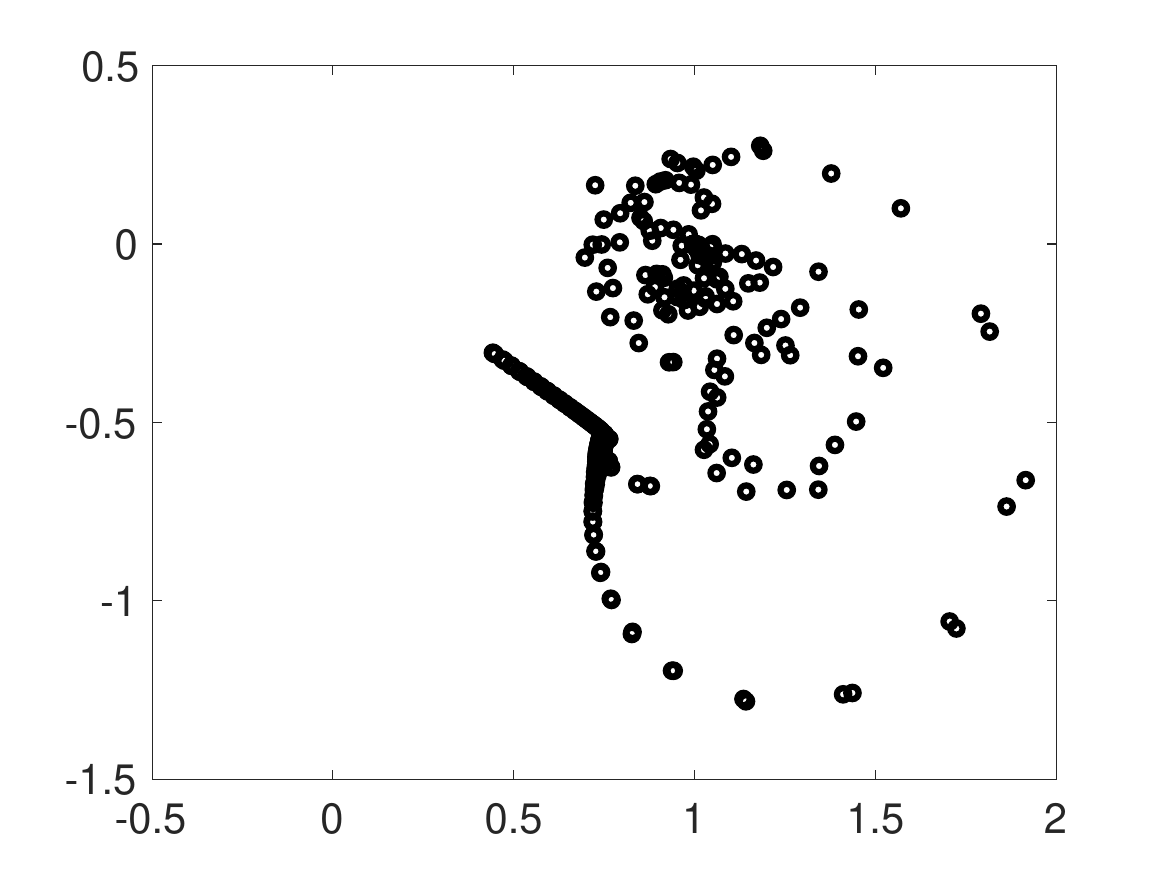}\includegraphics[width=0.48\textwidth]{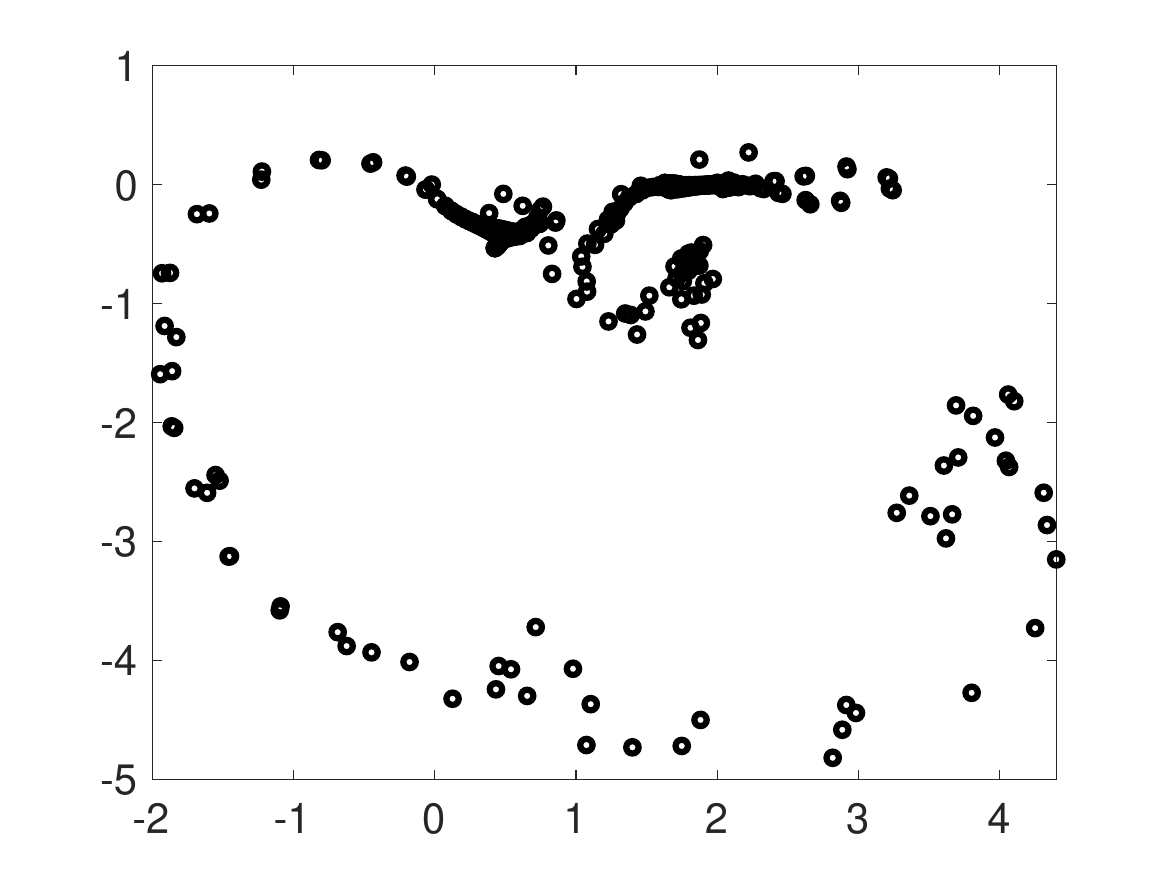}
 \caption{Eigenvalue distribution in the \added[id=catB]{complex plane} of the CFIER operators in the case of the kite geometry and $\omega=40$ for the Dirichlet (left) and Neumann (right) cases.}
 \label{fig:eig}
\end{figure}

\begin{table}[h]
\centering
{
\begin{tabular}{|c|c|c|c|c|c|c|}
 \cline{1-7}
Size & \multicolumn{2}{c|}{Navier CFIE} & \multicolumn{2}{c|}{Helmholtz CFIE}  & \multicolumn{2}{c|}{Helmholtz CFIER}\\ 
 \cline{1-7}
$n$& Dirichlet & Neumann & Dirichlet & Neumann & Dirichlet & Neumann\\
 \cline{1-7}
64 & 0.96 sec & 1.87  sec & 0.82 sec & 1.1 sec & 0.89 sec & 1.2 sec\\
128 & 2.58 sec & 6.02  sec & 2.50 sec & 4.6 sec & 2.69 sec & 5.2 sec\\
\hline
\end{tabular}}
\caption{Comparisons between the computational times required by our Nystr\"om solvers based on CFIE formulations based on Navier Green's function~\cite{dominguez2022nystrom} and the CFIE/CFIER Helmholtz decomposition formulations presented above for the kite geometry at the same level of discretizations. All the BIOs featured in these formulations were implemented using the Kussmaul-Martensen logarithmic splitting strategy and global trigonometric interpolation in the space of trigonometric polynomials of degree $n$. The computational times reported were produced by unoptimized MATLAB implementation on a single core of a 8 GB 2.7 GHz Dual-Core Intel Core i5 MacBook Pro.}
\label{tab:my_label}
\end{table}

\added[id=vD]{Hessians}{As we previously mentioned, the main appeal of using the BIE Helmholtz decomposition approach for the solution of the Navier scattering problems is the possibility to reuse discretizations of Helmholtz BIOs, thus avoiding dealing with BIOs associated with the Navier Green's function, whose implementation is more arduous. We illustrate in Table~\ref{tab:my_label} the commensurate computational times required in our unoptimized MATLAB implementation by solvers based on the two different approaches, that is the classical combined field based on Navier potentials and the regularized combined field approach based on the Helmholtz decomposition. These findings are not entirely surprising, given that one evaluation of the Navier Green's function requires an evaluation of a Helmholtz Green's function and two \added[id=vD]{Hessians} of Helmholtz Green's function for the pressure and shear wave-number. The kernels of the Navier single/double layer integral operators and especially those of the Navier hypersingular operators are significantly more complicated, and their numerical evaluations require the evaluation of the kernels of the four classical Helmholtz BIOs for both wave-numbers. }

\paragraph{Lipschitz case.}
\added[id=vD]{Finally, we illustrate in Figure~\ref{fig:mysecond3} the iterative behavior of the CFIE and CFIER formulations in the case of   a square and  an L-shaped scatterer of lengths $2\pi$
with arc length parametrizations, equipped with sigmoidal graded meshes that accumulate points polynomially (e.g. polynomials of degree three were used in our numerical experiments) towards the corner points}. We remark that the various well posedness proofs of the formulations considered in this paper relied heavily on the smoothness of the curve $\Gamma$. Indeed, a key ingredient in the analysis was the increased regularity of the double layer operators, that is $\K_k^\top\in \mathrm{OPS}(-3)$, which in the case of Lipschitz curves is only $\K_k^\top\in \mathrm{OPS}(0)$. As a consequence, the CFIER formulations are no longer of the second kind. Nevertheless, the CFIER formulations still outperform the CFIE formulations. For instance, we did not observe convergence when GMRES solvers were applied to CFIE formulations in the Neumann case.

\begin{figure}
 \centering
 \includegraphics[width=0.45\textwidth]{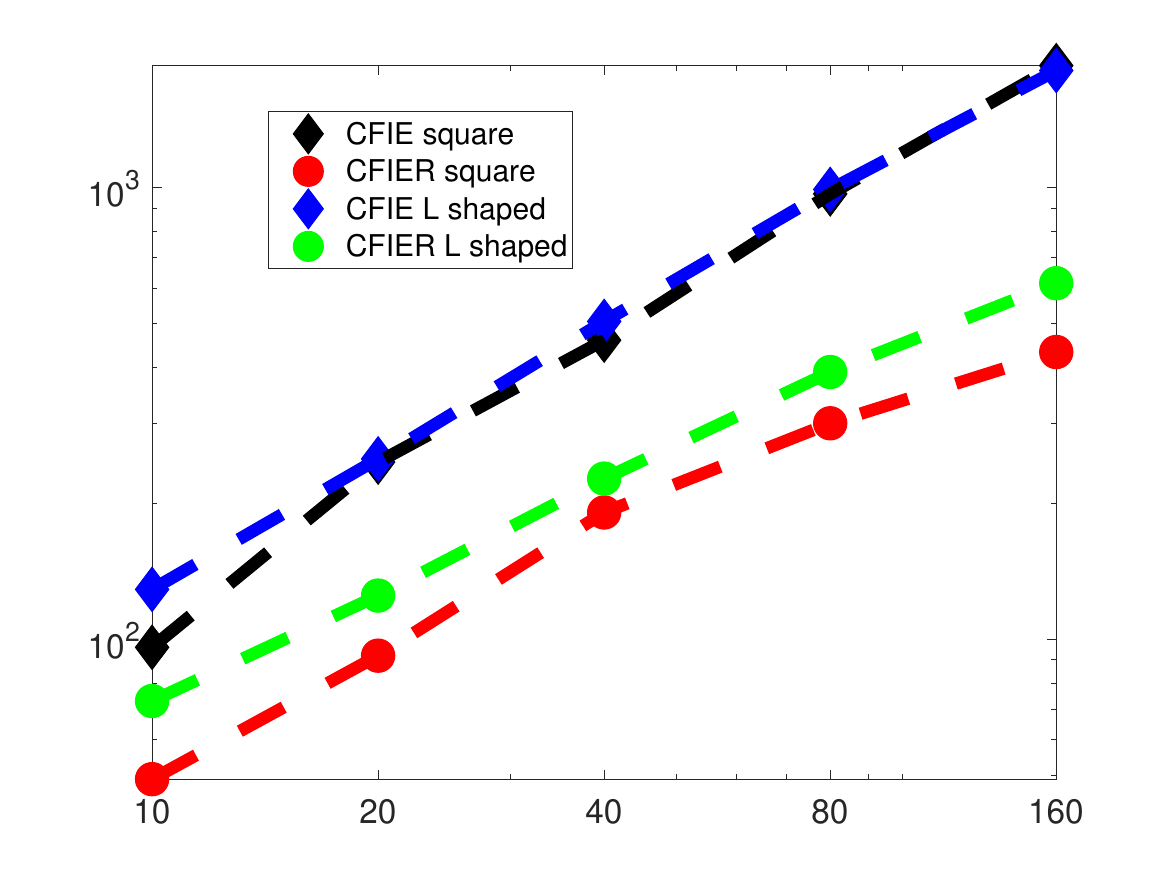}\includegraphics[width=0.45\textwidth]{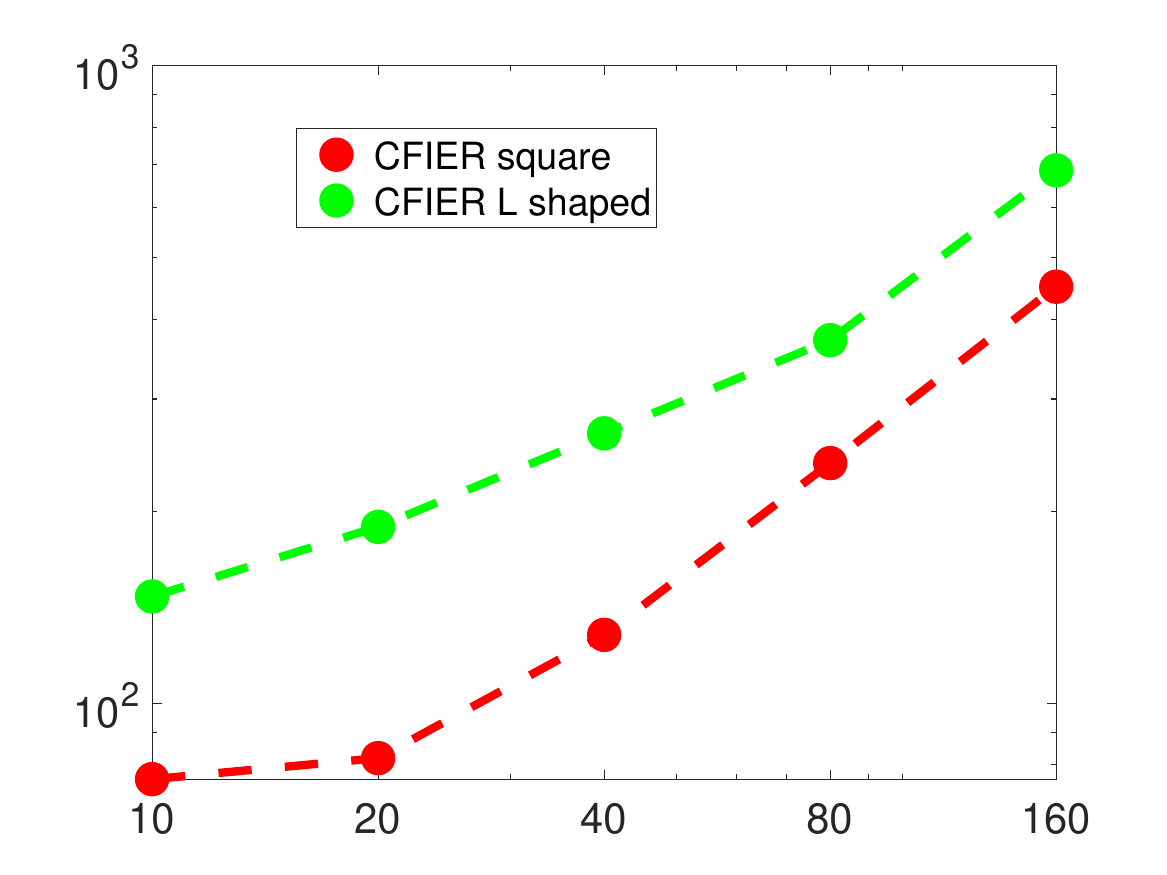}
 \caption{Numbers of GMRES iterations required to reach residuals of $10^{-4}$ for the CFIE and CFIER formulations for the square and the L-shaped scatterers in the high frequency regime for the Dirichlet (left) and Neumann (right) boundary conditions and the same material parameters as in the previous cases, In the case of Neumann boundary conditions, the solvers based on CFIE formulations did not converge. }
 \label{fig:mysecond3}
\end{figure}

\section{Conclusions}

We introduced and analyzed CFIER formulations for the solution of two dimensional elastic scattering problems via Helmholtz decompositions. Despite featuring non standard BIOs, we showed that these CFIER formulations are well posed in the case of smooth scatterers. The CFIER formulations, being of the second kind, possess superior spectral properties to the classical CFIE formulations for both Dirichlet and Neumann boundary conditions. The extension of our approach to the case of homogeneous penetrable scatterers is straightforward and is currently being pursued. The extension of the CFIER methodology to three dimensional elastic scattering problems via Helmholtz decompositions, on the other hand, is more challenging as it requires incorporation of both Helmholtz and Maxwell BIOs.

\section*{Acknowledgments}
 Catalin Turc gratefully acknowledges support from NSF through contract DMS-1908602. V\'{\i}ctor Dom\'{\i}nguez is partially supported by project ``Adquisici\'on de conocimiento y miner\'{\i}a de datos,
funciones especiales y m\'etodos num\'ericos avanzados'' from Universidad P\'ublica de Navarra  and ``T\'ecnicas innovadoras para la resoluci\'on de problemas evolutivos'', ref. PID2022-136441NB-I00 from Ministerio de Ciencia e Innovaci\'on, Gobierno de Espa\~na, Spain.

\added[id=vD]{The authors wish to thank the reviewers for their careful reading of the first version of this manuscript and for their valuable suggestions which undoubtedly helped us to improve this paper.}

\appendix

\section{Helmholtz BIOs and pseudodifferential operator calculus}\label{ap:BIO}

For a given wave-number $k$ and a functional density $\varphi$ on the boundary $\Gamma$ denote in this section the Helmholtz single and double layer potentials in the form
\[
\SL_{k,\Gamma}[\varphi](\x):=\int_\Gamma H_0^{(1)}(k|\x-\y|)\varphi(\y) {{\rm d}\y},\quad {\DL_{k,\Gamma}}[\varphi](\x):=\int_\Gamma \frac{\partial H_0^{(1)}(k|\x-\y|)}{\partial \nnn(\y)}\varphi(\y) {{\rm d}\y},\ \x\in\mathbb{R}^2\setminus\Gamma.
\]
Here $\tfrac{i}4H_0^{(1)}(k|\cdot|)$, where $H_0^{(1)}$ is the Hankel function of first kind and order zero, is the fundamental solution of the Helmholtz equation.

Any exterior, radiating, solution of the Helmholtz equation can be written
\begin{equation}
\label{eq:repr:form}
 u(\x)=\DL_{k,\Gamma}[\gamma_\Gamma^+u](\x) - \SL_{k,\Gamma}[\partial_{\nnn}^+u](\x),\quad \x \in\Omega^+.
\end{equation}

The associated layer operators (read by rows in the matrix operator: double layer, single layer, hypersingular and adjoing double layer) are  defined as
\[
\begin{aligned}
 \begin{bmatrix}\gamma^+
\\
\partial_{\nnn}^+
 \end{bmatrix}
 \begin{bmatrix}\mathrm{DL}_{k,\Gamma} &
 - \mathrm{SL}_{k,\Gamma}
 \end{bmatrix}
 =\frac12 {\cal I} + {\cal C}_{k,\Gamma}
  \end{aligned}
  \]
  where
  \[
{\cal I} =\begin{bmatrix}
           \mathrm{I}& \\
                     &\mathrm{I}
          \end{bmatrix},\quad
{\cal C}_{k,\Gamma}:=
\begin{bmatrix}
   \K_{k,\Gamma}&-\V_{k,\Gamma}\\
   \W_{k,\Gamma}&-\K^\top_{k,\Gamma}
  \end{bmatrix} : H^s(\Gamma)\times H^{s-1}(\Gamma)\to H^s(\Gamma)\times H^{s-1}(\Gamma).
\]
The matrix operator ${\cal C}_{k,\Gamma}$ is then continuous  for any $s$ if $\Gamma$ is smooth.

Calder\'on identities
can be derived from the fact that (see for instance \cite[Ch.2 ]{Saranen}, \cite[Ch. 1]{hsiao2008boundary} or
\cite[Chapter 6-7]{mclean:2000}) that
 \[
 \left(\frac12\mathcal{I}+{\cal C}_{k,\Gamma}\right)^2=\frac12\mathcal{I}+{\cal C}_{k,\Gamma},
 \]
which are equivalent to
\begin{equation}\label{eq:calderonIdentitiesv2}
\begin{aligned}
\V_{k,\Gamma}   \W_{k,\Gamma} &= -\frac{1}4 \I+\K_{k,\Gamma}^2,&\quad
\W_{k,\Gamma}   \V_{k,\Gamma} &= -\frac{1}4 \I+(\K^\top_{k,\Gamma})^2\\
\V_{k,\Gamma} \K_{k,\Gamma}^\top & =  \K_{k,\Gamma}\V_{k,\Gamma},
\quad  &\W_{k,\Gamma} \K_{k,\Gamma} & =  \K^\top_{k,\Gamma} \W_{k,\Gamma}.
\end{aligned}
\end{equation}

It is convenient to present the analysis of robust BIE formulations of Helmholtz decompositions of Navier equations in the framework of periodic pseudodifferential operators. Consider then a regular positive oriented arc-length parameterization of $\Gamma$,  ${\bf x}:[0,L]\to  \Gamma $ where $L$ is the length of the curve. For any function, or distribution in the general case, $\varphi_\Gamma:\Gamma\to \mathbb{C}$ we will denote  by
\[
 \varphi(\tau) = \varphi_\Gamma({\bf x}(\tau))
\]
its parameterized counterpart.
We will extend this convention to the operators. For instance,
\[
(\V_k\varphi)(\tau) = \frac{i}4\int_0^L
H_0^{(1)}(k|{\bf x}(t)-{\bf x}(\tau)|)\varphi_\Gamma({\bf x}(\tau))\,
{\rm d}\tau.
\]
is the parameterized version of $\V_{k,\Gamma}$.

The unit tangent and normal parameterized vector to $\Gamma$ (at ${\bf x}(\tau)$) are then given by
\[
 \ttt(\tau) = {\bf x}'(\tau),\quad
 \nnn(\tau) = \mathrm{Q}{\bf x}'(\tau),\quad \mathrm{Q} =\begin{bmatrix}
                                                            & 1\\
                                                            -1&
                                                           \end{bmatrix}
\]
so that
\[
(\partial_{\ttt}\varphi_\Gamma)\circ{\bf x} = \varphi'=: \mathrm{D}\varphi.
\]
Besides, the (parameterized) signed quadrature can be then expressed as
\begin{equation}\label{eq:ttnn:appendix}
\kappa = \ttt\cdot \D\nnn = -\nnn\cdot \D\ttt.
\end{equation}

It is a well-established result, see for instance \cite[Ch. 8]{Kress}, that Sobolev spaces on $\Gamma$ $H^s(\Gamma)$ can be then identified with the $L-$periodic Sobolev spaces
\[
 H^s = \Big\{\varphi\in{\cal D}'(\mathbb{R}) \ :\  \varphi(\cdot+L) = \varphi, \quad \|\varphi\|_{s}<\infty \Big\}
\]
where, with
\[
 \quad \widehat{\varphi}(n) =\frac{1}{L}\int_{0}^{L} \varphi(\tau) e_{-n}(\tau)\,{\rm d}\tau,\qquad
 e_{n}(\tau)= \exp\Big( \frac{2\pi i}{L} n \tau \Big)
\]
the Fourier coefficients, the Sobolev norm is given by
\[
 \|\varphi\|_{s}^2 = |\widehat{\varphi}(0)|^2 + \sum_{n\ne 0} |n|^{2s}
 |\widehat{\varphi}(n)|^2.
\]
Clearly,
\[
 \varphi = \widehat{\varphi}(0)+\sum_{n\ne 0}\widehat{\varphi}(n)e_{n}
\]
with convergence in $H^s$. The set $\{H^s\}_{s\in\mathbb{R}}$ is a Hilbert scale, meaning that $H^s$ is actually a Hilbert space, that $H^{t}\subset  H^{s}$ for any $t> s$ with compact and dense injection and that
\[
 \bigcap_{s} H^s ={\cal D}:= \{\varphi\in{\cal C}^\infty(\mathbb{R})\ :\ \varphi= \varphi(\cdot+L)\},
 \quad
 \bigcup_{s} H^s ={\cal D}'.
\]
We will denote by $\mathrm{OPS}(m)$ the class of periodic pseudodifferential operators of order $m\in\mathbb{Z}$ on $\Gamma$. That is, $\mathrm{A}\in \mathrm{OPS}(m)$ if
$\mathrm{A}:H^{s}\to H^{s-m}$ is continuous for any $s$.   For convenience, we will write that
\[
 \mathrm{A} = \mathrm{B} +  \mathrm{OPS}(m-1),
\]
if $ \mathrm{A} - \mathrm{B} \in \mathrm{OPS}(m-1)$

 Trivially, $\mathrm{A}\in \mathrm{OPS}(m)$ implies that $\mathrm{A}\in \mathrm{OPS}(m+1)$ and as an operator in ${\mathrm{OPS}}(m+1)$ $\mathrm{A}$ is compact. We also set
\[
 {\mathrm{OPS}}(-\infty):=\bigcap_{m\in\mathbb{Z}} \mathrm{OPS}(m)
\]
the class of smoothing operators which in turn can be identified with  integral operators with $L-$periodic smooth kernel.


%
%
%
%

In connection to periodic pseudodifferential operators, Fourier multiplier operators will play a central role in what follows:
\begin{equation}\label{eq:FMult}
  \mathrm{A}\varphi =\sum_{m=-\infty}^\infty  \mathrm{A}(n) \widehat{\varphi}(n) e_n\qquad 
\end{equation}
where $\mathrm{A}(n)\in\mathbb{C}$ are referred to as the symbol of $\mathrm{A}$. Clearly, if $\mathrm{A}$ is a Fourier multiplier defined as in~\eqref{eq:FMult}, if there exists $c>0$ and $r$ such that $|\mathrm{A}(n)|\leq c|n|^r$ for all $n\in\mathbb{Z}$ (which for simplicity we shall denote in what follows as $ \mathrm{A}(n)  =\mathcal{O}(n^r)$) then $ \mathrm{A} \in \mathrm{OPS}(r)$.  Equivalently, if
\[
 \sum_{n\in\mathbb{Z}} |\mathrm{A}(n)|<\infty,
\]
then with the function
\[
 a(t)=\frac{1}{L}\sum_{n\in\mathbb{Z}} \mathrm{A}(n)e_n(t)\in L^\infty(\mathbb{R}), \quad\text{that is,\quad} \widehat{a}(n)=\frac{1}L \mathrm{A}(n),
\]
$\mathrm{A}$ is just a convolution operator:
\[
 \mathrm{A} f =\int_0^L a(\cdot-\tau)\varphi(\tau)\,{\rm d}\tau.
\]
Note that the tangential derivative becomes a Fourier multiplier:
\[
 \mathrm{D}\varphi = \sum_{n\ne 0} \left(\frac{2\pi i m}{L} \right) \widehat{\varphi}(m) e_m =\varphi'
\]
and that for any nonnegative integer $r$,
\[
  \mathrm{D}^r = \mathrm{D}_{r},\quad \mathrm{D}_r\varphi := \sum_{n\ne 0} \left(\frac{2\pi i m}{L} \right)^r \widehat{\varphi}(m) e_m.
\]
We will extend this definition to set $\mathrm{D}_r$ for negative integer values of $r$ too.

Three additional Fourier multiplier operators we will required in our analysis. First, the Bessel operator
\begin{eqnarray*}
 \Lambda\varphi &=& -\frac{1}{2\pi}\int_0^L \log\left(4e^{-1}\sin^2\left(\frac{\pi}L(\cdot-\tau)\right)\right)\varphi(\tau)\,{\rm d}\tau=\frac{L}{2\pi}\left[\widehat{\varphi}(0)+\sum_{n\ne 0}\frac{1}{|n|}\widehat{\varphi}(n)e_{n} \right],
\end{eqnarray*}
next, the Hilbert transform, or Hilbert singular operator,
\[
 \HH\varphi  :=  -\frac{1}{L}\mathrm{p.v.}\int_0^L \cot\left(\frac{\pi}L(\cdot-\tau)\right)\varphi(\tau)\,{\rm d}\tau+  i \mathrm{J}\varphi =  i  \bigg[ \widehat{\varphi}(0) + \sum_{n\ne 0}\mathop{\rm \rm sign}(n)\widehat{\varphi}(n)e_{n}\bigg],
\]
(p.v. stants for princial value integral, since the integrand is singular and does not exist in a classical sense)
with
\[
 \mathrm{J}\varphi := \widehat{\varphi}(0)= \frac{1}{L}\int_\Gamma \varphi_\Gamma.
\]
the mean operator. We notice then
\[
\begin{aligned}
 \D\Lambda & = \Lambda\D = i \HH+\mathrm{J} , & \quad \Lambda & =\HH\D_{-1}+\frac{L}{2\pi}\mathrm{J}, \\
 \D \D_{-1}& =\I-\mathrm{J} =-\HH^2-\mathrm{J}& \quad \Lambda^{-1} & =-\D\HH+\mathrm{J}=-\D_2\Lambda+\frac{2\pi}L\mathrm{J}.
 \end{aligned}
\]

By a direct analysis of the resulting kernel  we can easily check
\begin{equation}
  \label{eq:H3}
a  \HH - \HH a\in \mathrm{OPS}(-\infty)
\end{equation}
for any smooth function $a$. Similarly.
\begin{equation}
  \label{eq:H4}
a \D_r-\D_ra\in \mathrm{OPS}(r-1)
\end{equation}
which is trivial to show for positive $r$ and an easy consequence, from  negative values of $r$, of   the equality
\[
\D_{-1}(a\varphi)=
 a\D_{-1}\varphi -\D_{-1}(a'\D_{-1}\varphi)-(\D_{-1}a)\mathrm{J}\varphi.
\]
which implies, still for $r<0$
\begin{equation}\label{eq:Leibnitz:rule}
\D_{r}(a\varphi)=\sum_{m=-n}^r \binom{r}{-m}(\D_{r-m}a)(\D_{m}\varphi) +\mathrm{OPS}(-n-1).
\end{equation}

We point out that if $\{\K_0,\V_0,\W_0,\K^\top_0\}$ are the boundary layer operators associated to the Laplace equation ($k=0$),
\begin{eqnarray*}
 \V_0 &=& \frac{1}{2}\Lambda + \mathrm{OPS}(-\infty)= \frac{1}{2}\HH\D_{-1} + \mathrm{OPS}(-\infty), \\
 \W_0 &=& -\frac{1}2\mathrm{\Lambda}^{-1}_0+\mathrm{OPS}(-\infty) =\frac{1}{2}\D\HH  + \mathrm{OPS}(-\infty)\\
 \K_0 &=& \mathrm{OPS}(-\infty)\ = \K_0^\top.
\end{eqnarray*}
since the functions
\[
 \log\left(\frac{\left|\sin \left(\frac{\pi}L(t-\tau)\right)\right|}{|{\bf x}(t)-{\bf x}(\tau)|^2}\right),\quad
\partial_{\tau}\partial_t\log \left|\sin \left(\frac{\pi}L(t-\tau)\right)\right| -
  \frac{\nnn({\bf x}(t))\cdot\nnn({\bf x}(\tau))}{|{\bf x}(t)-{\bf x}(\tau)|^2}
\]
are smooth.

Our aim is to extend such expansions for the Calderon operators associated to Helmholtz equation. For such purposes, let us define for non-negative integer values of $r$
\[
 \alpha_{2r}(\tau)= -\frac{1}{2\pi}\left[2\sin^{2}\left(\frac{\pi}L\tau\right)\right]^r\log\left(4e^{-1}\sin^2\left(\frac{\pi}L\tau\right)\right)
\]
and denote the associated Fourier multiplier operator by $\Lambda_{2r}$. Note that  $\Lambda_0=\Lambda$ and that
\[
 \widehat{\alpha}_{2r}(n)=
 -\tfrac12\widehat{\alpha}_{2r-2}(n-1)
 +\widehat{\alpha}_{2r-2}(n)
 -\tfrac12\widehat{\alpha}_{2r-2}(n+1)
\]
which implies
\[
  \widehat{\alpha}_{2r}(n) = \frac{(-1)^r (2r)! L }{2^r (2\pi) }\frac{1}{|n|^{2r+1}} + {\cal O}(n^{-2r-3}) =
 \frac{(-1)^r (2r)! 2^r \pi^{2r}}{L^{2r}}  |\widehat{\alpha}_{0} (n)|^{2r+1}+ {\cal O}(n^{-2r-3})
\]
That is,  $\Lambda_{2r}\in \mathrm{OPS}(-2r-1)$ with
\[
 \Lambda_{2r}=  \frac{ (2r)! 2^r \pi^{2r} }{L^{2r}}(-1)^r\Lambda^{2r+1}+  \mathrm{OPS}(-2r-3).
\]
(Actually, $\Lambda_{2r}$ can be {\em expanded} in powers of $\Lambda^{2\ell+1}$ with $\ell\ge r$ is needed). In particular,
\[
 \Lambda_{2 }=  -  \frac{(2\pi)^2}{L^2} \Lambda^{3}+  \mathrm{OPS}(-5).
\]

 Similarly, we can set
\[
 \alpha_{2r+1}(\tau) = (e_{1}(\tau)-1) \alpha_{2r}(\tau),
\]
show next that
\[
  \widehat{\alpha}_{2r+1}(n) =   \frac{(-1)^r (2r+1)! L}{2^r(2\pi)}\frac{1}{n^{2r+1}} + {\cal O}(n^{-2r-4}).
\]
and conclude that the associated Fourier multiplier operator  satisfies
\[
 \Lambda_{2r+1}=  \frac{(2r+1)! 2^{r+1} \pi^{2r+1} }{L^{2r+1}}(-1)^r  \Lambda^{2r+2}+  \mathrm{OPS}(-2r-3).
\]
Finally, it is a well established result that periodic integral operators
\[
\mathrm{L}_{r}\varphi =  \int_0^{L} D(\cdot,\tau)\alpha_{r}(\cdot-\tau)\varphi(\tau)\,{\rm d}\tau,
\]
with $D$ a $L-$periodic smooth function,
belongs to $\mathrm{OPS}(-r-1)$ (see \cite{Saranen}).

The well posedness of the various BIE formulations considered in this paper relies heavily on the following result which provides decompositions of the Helmholtz operators $\V_k$ and $\W_k$ in sums of Fourier multiplier pseudodifferential operators of orders $-1$ and $-3$ for the first operator and $1$ and $-1$ for the second plus remainders that are smoother. Specifically, we establish
\begin{proposition}\label{prop:th:psido:exp}
 It holds
 \begin{eqnarray}
 \V_k&=& \frac12\Lambda +\frac{k^2}4\Lambda^3 +  \mathrm{OPS}(-5),\label{eq:01:prop:th:psido:exp}\\
 \W_k&=& -\frac12\Lambda^{-1} + \frac{k^2}{4}\Lambda  + \mathrm{OPS}(-3),\label{eq:02:prop:th:psido:exp}\\
 \K_k&=&  \frac{k^2}2\kappa \Lambda^{3} + \mathrm{OPS}(-4)= \K_k^\top ,\label{eq:03:prop:th:psido:exp}
 \end{eqnarray}
\end{proposition}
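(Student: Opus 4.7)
All three expansions follow from the small-argument asymptotics of the Hankel functions $H_0^{(1)}$, $H_1^{(1)}$ combined with the factorization $|\x(t)-\x(\tau)|^2 = 4\sin^2(\pi(t-\tau)/L)\,\sigma^2(t,\tau)$, where $\sigma$ is smooth, positive, $L$-periodic in each argument, and satisfies $\sigma^2(t,t)=(L/(2\pi))^2$ by the arc-length parametrization. This factorization splits $\log|\x(t)-\x(\tau)|=\log(2|\sin(\pi(t-\tau)/L)|)+\log\sigma(t,\tau)$ and $|\x(t)-\x(\tau)|^{2m}=2^m[2\sin^2(\pi(t-\tau)/L)]^m\sigma^{2m}(t,\tau)$, so that products of these quantities naturally produce the kernels $\alpha_{2m}(t-\tau)\sigma^{2m}(t,\tau)$ via the identity $\alpha_{2m}=[2\sin^2(\pi(\cdot)/L)]^m\alpha_0$.

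For the expansion of $\V_k$, substituting $z=k|\x(t)-\x(\tau)|$ into $(i/4)H_0^{(1)}(z) = -(2\pi)^{-1}[\log(z/2)+\gamma]J_0(z) + (\text{entire in }z^2)$ decomposes the kernel of $\V_k$, modulo a smooth periodic kernel, as $\sum_{m\ge 0}c_m(k)\sigma^{2m}(t,\tau)\alpha_{2m}(t-\tau)$ with $c_0=1/2$ and $c_1=-k^2/4$. The $m=0$ summand gives $\tfrac12\Lambda$ exactly. For $m=1$, I Taylor-expand $\sigma^2(t,\tau)=\sum_{j\ge 0}D_j(t)(\tau-t)^j$ and use the appendix lemma that multiplication of an $\alpha_{2r}$-type kernel by $(\tau-t)^j$ produces an operator in $\mathrm{OPS}(-2r-1-j)$; this yields $\mathrm{L}_2=D_0(t)\Lambda_2+\mathrm{OPS}(-5)$, where the potential $\mathrm{OPS}(-4)$ contribution from $j=1$ vanishes because $D_1(t)\equiv 0$: the numerator $|\x(t)-\x(\tau)|^2$ has no cubic term in $\tau-t$, since $\x'\cdot\x''=0$ in an arc-length parametrization. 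Combining with $D_0(t)=(L/(2\pi))^2$ and $\Lambda_2=-(2\pi/L)^2\Lambda^3+\mathrm{OPS}(-5)$ gives $-\tfrac{k^2}{4}\mathrm{L}_2 = \tfrac{k^2}{4}\Lambda^3+\mathrm{OPS}(-5)$; the $m\ge 2$ terms are absorbed in $\mathrm{OPS}(-5)$ since $\alpha_{2m}\in\mathrm{OPS}(-2m-1)$.

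For $\W_k$, I apply the Maue-type identity $\W_k\varphi = \D\V_k\D\varphi + k^2\,\nnn\cdot\V_k(\nnn\varphi)$ derived in Proposition \ref{proposition:grad:DL}. Since $\Lambda$ is a Fourier multiplier commuting with $\D$, direct symbol calculation yields $\D\Lambda\D = -\Lambda^{-1}+\mathrm{OPS}(-\infty)$ and $\D\Lambda^3\D = -\Lambda+\mathrm{OPS}(-\infty)$, whence substituting the just-proved expansion of $\V_k$ gives $\D\V_k\D=-\tfrac12\Lambda^{-1}-\tfrac{k^2}{4}\Lambda+\mathrm{OPS}(-3)$. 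For the second Maue term, writing $\nnn(t)\cdot\nnn(\tau)=1-O((t-\tau)^2)$ and using that multiplication of the kernel of $\V_k\in\mathrm{OPS}(-1)$ by $(t-\tau)^2$ lowers the order by two gives $k^2\nnn\cdot\V_k(\nnn\,\cdot) = k^2\V_k+\mathrm{OPS}(-3) = \tfrac{k^2}{2}\Lambda+\mathrm{OPS}(-3)$. Summing yields \eqref{eq:02:prop:th:psido:exp}.

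For $\K_k$, the kernel equals $\tfrac{ik}{4}H_1^{(1)}(k|\x(t)-\x(\tau)|)(\x(t)-\x(\tau))\cdot\nnn(\tau)/|\x(t)-\x(\tau)|$. Using $H_1^{(1)}(z) = -\tfrac{2i}{\pi z} + \tfrac{iz}{\pi}\log z + (\text{analytic odd in }z) + O(z^3\log z)$, the $1/z$ part reproduces the Laplace double-layer kernel $(2\pi)^{-1}(\x(t)-\x(\tau))\cdot\nnn(\tau)/|\x(t)-\x(\tau)|^2$, which is $C^\infty$-periodic (since $(\x(t)-\x(\tau))\cdot\nnn(\tau)=O((t-\tau)^2)$), hence in $\mathrm{OPS}(-\infty)$. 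The $z\log z$ part, combined with $(\x(t)-\x(\tau))\cdot\nnn(\tau) = \tfrac12(t-\tau)^2\kappa(\tau) + O((t-\tau)^3)$, produces an $\alpha_2$-type leading kernel proportional to $k^2\kappa(\tau)\sigma^2(t,\tau)\alpha_2(t-\tau)$, and the same reduction as for $\V_k$ (with $\kappa(\tau)=\kappa(t)+\mathrm{OPS}(-4)$-inducing corrections) yields $\tfrac{k^2}{2}\kappa\Lambda^3$. The equality $\K_k = \K_k^\top$ modulo $\mathrm{OPS}(-4)$ follows either by a symmetric analysis of the kernel $\partial_{\nnn(\x)}\phi_k(\x-\y)$, which produces the same principal symbol since only $\kappa$ at the coincidence point enters, or from the Calder\'on identity $\V_k\K_k^\top = \K_k\V_k$. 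The main technical challenge throughout is the careful bookkeeping of constants through the successive layers of expansion and, most importantly, the verification of the vanishing of the first-order Taylor coefficient of $\sigma^2$ that upgrades the $\V_k$ remainder from $\mathrm{OPS}(-4)$ to $\mathrm{OPS}(-5)$.
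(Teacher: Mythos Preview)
Your argument is correct. For $\V_k$ and $\K_k$ you follow essentially the paper's route: expand the Hankel kernel, factor $|\x(t)-\x(\tau)|^2$ through $\sin^2\!\big(\tfrac{\pi}{L}(t-\tau)\big)$, and identify the resulting $\alpha_{2r}$-pieces with powers of $\Lambda$. You even make explicit a point the paper leaves implicit, namely that $\x'\cdot\x''=0$ in arc length forces the expansion of $|\x(t)-\x(\tau)|^2$ to jump from $\sin^2$ directly to $\sin^4$, which is exactly what secures the sharp $\mathrm{OPS}(-5)$ remainder in \eqref{eq:01:prop:th:psido:exp}.

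The one genuine difference is your treatment of $\W_k$. The paper obtains \eqref{eq:02:prop:th:psido:exp} algebraically from the Calder\'on identity $\W_k\V_k=-\tfrac14\I+(\K_k^\top)^2$: having just shown $\V_k=\tfrac12\Lambda+\tfrac{k^2}{4}\Lambda^3+\mathrm{OPS}(-5)$ and $\K_k^\top\in\mathrm{OPS}(-3)$, one writes $\W_k=\W_k\V_k\cdot(2\Lambda^{-1}-k^2\Lambda)+\mathrm{OPS}(-3)$ and reads off the answer. You instead use the Maue identity $\W_k=\D\V_k\D+k^2\,\nnn\cdot\V_k(\nnn\,\cdot)$, combined with the symbol identities $\D\Lambda\D=-\Lambda^{-1}+\mathrm{OPS}(-\infty)$, $\D\Lambda^3\D=-\Lambda+\mathrm{OPS}(-\infty)$ and the elementary observation $\nnn(t)\cdot\nnn(\tau)=1+O((t-\tau)^2)$. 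Your route is slightly more self-contained in that it does not require first establishing the order of $\K_k^\top$, only the expansion of $\V_k$; the paper's route is purely algebraic once $\V_k$ and $\K_k^\top$ are in hand. Both give the same two-term expansion with comparable effort.
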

\begin{proof}
From the decomposition of the Bessel functions (see for instance \cite[Ch. 12]{Kress} or \cite[\S 10]{NIST:DLMF}) we get the decomposition:
\[
 \frac{i}{4} H_0^{(i)}(t) = -\frac{1}{4\pi}\log t^2+\frac{1}{16\pi}t^2\log t^2  +
 \underbrace{\frac{1}{4\pi} \frac{1 }{t^4}(1-\frac{1}4t^2-J_0(t))}_{B_0(t)} t^4\log t^2+
 C_0(t),
\]
with $B_0$ and $C_0$ smooth. Since
\[
 |{\bf x}(t)-{\bf x}(\tau)|^2 = \frac{L^2}{\pi^2}\sin^2\left(\frac{\pi}L(t-\tau)\right) + D_1(t,\tau)\sin^4\left(\frac{\pi}L(t-\tau)\right),
\]
$D_1$ being smooth, the following decomposition holds
\begin{eqnarray*}
  \frac{i}{4} H_0^{(i)}(k|{\bf x}(t)-{\bf x}(\tau)|)& =&  -\frac{1}{4\pi}\log\left(4 e^{-1}\sin^2\left(\frac{\pi}L(t-\tau)\right)\right) \\
  &&+\frac{(Lk)^2}{4} \frac{1}{(2\pi)^3}  \left(2\sin^2\frac{\pi}L(t-\tau)\right)\log\left(4 e^{-1} \sin^2 \left(\frac{\pi}L(t-\tau)\right)
  \right)
  \\
  &&+ D_2(t,\tau)\sin^4\left(\frac{\pi}L(t-\tau)\right)\log\sin^2(4 e^{-1}\pi(t-\tau))+ D_3(t,\tau),
\end{eqnarray*}
where $D_1,\ D_2$ smooth bi-periodic functions.
 Therefore,
\begin{eqnarray*}
 \V_k\varphi   &=&  \frac{1}2\Lambda  \varphi
-\frac{(Lk)^2}{4} \frac{1}{(2\pi)^2}\int_0^{L }\alpha_2( \,\cdot\,-\tau)\,\varphi(\tau)\,{\rm d}\tau\\
 &&
 +\int_0^{L }  D_2(t,\tau)\alpha_4 (\,\cdot\,-\tau)\,\varphi(\tau)\,{\rm d}\tau
 + \int_0^{L }B( \,\cdot\,,\tau)\,\varphi(\tau)\,{\rm d}\tau\\
  &=& \frac{1}2\Lambda  \varphi
 -\frac{k^2}{4}\frac{1}{(2\pi)^2}  \Lambda_2\varphi  + \mathrm{OPS}(-5)
 \\
 &=&
   \frac{1}2\Lambda  \varphi
+\frac{k^2}{4 } \Lambda  ^3 \varphi     + \mathrm{OPS}(-5).
\end{eqnarray*}
The analysis of $\mathrm{K}_k$ is very similar. Indeed, the kernel  is given by
\[
 \frac{i}4 H_1^{(1)}(k|\x-\y|)k|\x-\y|\frac{(\x-\y)\cdot\nnn(\y)}{|\x- \y|^2},
\]
which with the decomposition
\[
 \frac{i}4 H_1^{(1)}(t) t= -\frac{1}{8\pi} t^2\log t^2 + \underbrace{\frac{1}{4\pi t^2}\left(\frac{1}{2} -\frac{1}{t}J_1(t)\right)}_{E_1(t)}t^4\log t^2 +C_1(t).
\]
and the fact that
\[
 \frac{(\x-\y)\cdot\nnn(\y)}{|\x- \y|^2} = -\frac{1}2\kappa(\x) +E_2(\x,\y)|\x-\y|
\]
($E_1,\ E_2$ are again smooth functions)
allow us to   write
\begin{eqnarray*}
 \K_k\varphi   &=& -\frac{(Lk)^2}{2(2\pi)^2} \kappa \Lambda_2 \varphi
+\int_0^{L }  E_2(t,\tau)\alpha_3(\,\cdot\,-\tau)\,\varphi(\tau)\,{\rm d}\tau
 + \int_0^{L }E_3( \,\cdot\,,\tau)\,\varphi(\tau)\,{\rm d}\tau\\
 &=&   \frac{k^2}2\kappa \Lambda^3 \varphi+ \mathrm{OPS}(-4).
\end{eqnarray*}
The case $\K^\top$ is consequence of \eqref{eq:H3}-\eqref{eq:H4} since
\[
 \Lambda(\kappa \cdot)= \kappa
 \Lambda +\mathrm{OPS}(-2)
\]
which with the fact $\Lambda^\top = \Lambda$.

Finally, for the hypersingular operator, we note that with the identity $\W_k\V_k = -\frac{1}4\I+(\K_k^\top)^2$ cf. \eqref{eq:calderonIdentitiesv2},
\begin{eqnarray*}
 \W_k &=& \W_k\left(\I-\frac{k^4}{4} \Lambda^4\right) +
 \mathrm{OPS}(-3)\\
 &=&\W_k\left(\frac12\Lambda+\frac{k^2}4 \Lambda^3\right)\left(2\Lambda^{-1}-k^2 \Lambda\right)
 +
 \mathrm{OPS}(-3)\\
 &=& \W_k \V_k\left(2\Lambda^{-1}-k^2 \Lambda\right)
 +\underbrace{\W_k\left(\frac12\Lambda+\frac{k^2}4 \Lambda^3-\V_k\right)\left(2\Lambda^{-1}-k^2 \Lambda\right)}_{\in
 \mathrm{OPS}(-3)}
+
 \mathrm{OPS}(-3)\\
 &=&-\frac{1}4   \left(2\Lambda^{-1}-k^2 \Lambda\right)
 +\underbrace{(\K_k^\top)^2 \left(2\Lambda^{-1}-k^2 \Lambda\right)}_{\in \mathrm{OPS}(-5)}+ \mathrm{OPS}(-3)\\
 &=&   -\frac{1}2 \Lambda^{-1}+\frac{k^2}4 \Lambda
 +
 \mathrm{OPS}(-3)
\end{eqnarray*}
and the result is proven.
\end{proof}
\begin{remark}\label{remark:prop:th:psido:exp}
This result appears in a slightly different form~\cite{hsiao2008boundary} (equations (10.4.5)). Indeed, several terms in the asymptotic expansion of the
principal symbol of the periodic pseudodifferential operator $\V_k$ are provided in equations (10.4.5) in~\cite{hsiao2008boundary}, and they coincide with the symbols of the operators in the decomposition we provide in Proposition~\ref{prop:th:psido:exp}. Indeed,

 It is also evident that the expansions in the four operators for Helmholtz equation can be continued  in  powers of $\Lambda_0$ (or equivalently) $\D\HH$), all of them being negative except the first term  for $\W_k$. Such as expansions have appeared previously in the literature (see for instance \cite{Saranen} and, with applications to the study and design of numerical methods, in  \cite{MR2013967,MR1912906}).
\end{remark}

\begin{theorem}\label{th:psido:exp}
 It holds
 \begin{eqnarray}
 \V_k&=& \V_0+ 2k^2\V_0^3 +  \mathrm{OPS}(-5)=\frac{1}2\HH\D_{-1}-\frac{k^2}4\HH\D_{-3}  +  \mathrm{OPS}(-5) \label{eq:01:th:psido:exp},\\
 \W_k&=& \W_0  + \frac{k^2}{2}\V_0  + \mathrm{OPS}(-3)= \frac{1}2\HH\D_{ 1}+\frac{k^2}4\HH\D_{-1}+  \mathrm{OPS}(-3) \label{eq:02:th:psido:exp},\\
 \K_k&=&  \K_k^\top =  4 k^2 \kappa \V_0^{3} + \mathrm{OPS}(-4)= -\frac{1}2\kappa k^2 \HH\D_{-3} + \mathrm{OPS}(-4) \label{eq:03:th:psido:exp}
 \end{eqnarray}
Equivalently,
\begin{eqnarray}
 \V_k\varphi &=& \frac{L}{4\pi}\sum_{n\ne k}  \left(n^2-\left(\frac{k L}{2\pi}\right)^2\right)^{-1/2} \widehat{\varphi}(n)e_n + \mathrm{OPS}(-5)\label{eq:04:th:psido:exp},\\
 \W_k\varphi &=& -\frac{\pi}{L}\sum_{n} \left(n^2-\left(\frac{k L}{2\pi}\right)^2\right)^{1/2} \widehat{\varphi}(n)e_n + \mathrm{OPS}(-3)\label{eq:05:th:psido:exp}.
\end{eqnarray}
Finally, the Dirichlet-to-Neumann operator satisfies
\begin{equation}\label{eq:06:th:psido:exp}
 \begin{aligned}
 \mathrm{DtN}_k\varphi& =  \HH\D+\frac{k^2}2\HH\D_{-1}  -\kappa k^2 \mathrm{D}_{-2}+ \mathrm{OPS}(-4)\\
 &\ =\  \frac1{L} \sum_{n} \left(n^2-\left(\frac{k L}{2\pi}\right)^2\right)^{1/2}  \widehat{\varphi}(n)e_n +\mathrm{OPS}(-2) = 2\W_k +\mathrm{OPS}(-2).
\end{aligned}
\end{equation}
\end{theorem}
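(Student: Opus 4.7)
The plan is to derive every assertion of the theorem from Proposition~\ref{prop:th:psido:exp} by purely algebraic manipulation within the pseudodifferential calculus already set up in the appendix. The bridge between the two sets of formulas is the pair of identities $\Lambda = \HH\D_{-1} + \tfrac{L}{2\pi}\mathrm{J}$ and $\Lambda^{-1} = -\D\HH + \mathrm{J}$; since $\mathrm{J}$ has rank one with smooth (constant) range it belongs to $\mathrm{OPS}(-\infty)$ and is absorbed into every remainder. Moreover $\HH$ and $\D_r$ commute \emph{exactly} as Fourier multipliers and $\HH^2 = -\I$, so $\Lambda^3 = (\HH\D_{-1})^3 + \mathrm{OPS}(-\infty) = \HH^3\D_{-3} + \mathrm{OPS}(-\infty) = -\HH\D_{-3} + \mathrm{OPS}(-\infty)$. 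Substituting these reductions into the three expansions of Proposition~\ref{prop:th:psido:exp} directly yields the second equalities in \eqref{eq:01:th:psido:exp}--\eqref{eq:03:th:psido:exp}, and the first equalities (in terms of powers of $\V_0$) follow at once from $\V_0 = \tfrac{1}{2}\Lambda + \mathrm{OPS}(-\infty)$ and $\W_0 = -\tfrac{1}{2}\Lambda^{-1} + \mathrm{OPS}(-\infty)$ combined with the same reduction.

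For the Fourier-multiplier reformulations \eqref{eq:04:th:psido:exp} and \eqref{eq:05:th:psido:exp}, I would compare symbols term by term. Expanding $(n^2 - a^2)^{-1/2} = |n|^{-1} + \tfrac{a^2}{2}|n|^{-3} + \mathrm{O}(|n|^{-5})$ and $(n^2 - a^2)^{1/2} = |n| - \tfrac{a^2}{2}|n|^{-1} + \mathrm{O}(|n|^{-3})$ with $a = kL/(2\pi)$, and multiplying by the prefactors $L/(4\pi)$ and $-\pi/L$ respectively, one verifies that the first two coefficients coincide exactly with the symbols of the leading pseudodifferential operators already identified in \eqref{eq:01:th:psido:exp} and \eqref{eq:02:th:psido:exp}, while the tails define Fourier multipliers of the claimed residual orders.

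For the Dirichlet-to-Neumann expansion \eqref{eq:06:th:psido:exp}, I would derive the Calder\'on identity $(\tfrac{1}{2}\I + \K_k^\top)\,\mathrm{DtN}_k = \W_k$ by applying the exterior normal trace to the representation formula \eqref{eq:repr:form}. Because $\K_k^\top \in \mathrm{OPS}(-3)$, the Neumann series $(\tfrac{1}{2}\I + \K_k^\top)^{-1} = 2\I - 4\K_k^\top + \mathrm{OPS}(-6)$ is legitimate in the pseudodifferential calculus, whence $\mathrm{DtN}_k = 2\W_k - 4\K_k^\top\W_k + \mathrm{OPS}(-5)$. Substituting the expansions \eqref{eq:02:th:psido:exp} and \eqref{eq:03:th:psido:exp} and simplifying $\HH\D_{-3}\HH\D = \HH^2\D_{-2} = -\D_{-2}$ produces the curvature correction $-\kappa k^2 \D_{-2}$, and the alternative Fourier-multiplier and $2\W_k$ forms then follow immediately from the preceding steps. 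The main obstacle I anticipate is the careful bookkeeping of commutators when smooth coefficients such as $\kappa$ are moved past $\HH$ or $\D_r$: each such swap generates a lower-order term via \eqref{eq:H3}--\eqref{eq:Leibnitz:rule}, and these contributions must be tracked, possibly together with one further term in the asymptotic expansion of $\W_k$ beyond what is given in Proposition~\ref{prop:th:psido:exp}, to confirm that the stated residual orders are actually achieved.
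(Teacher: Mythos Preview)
Your proposal is correct and follows essentially the same route as the paper: reduce Proposition~\ref{prop:th:psido:exp} to the $\HH\D_r$ form via $\Lambda=\HH\D_{-1}+\tfrac{L}{2\pi}\mathrm{J}$, match Fourier symbols by binomially expanding $(n^2-a^2)^{\pm1/2}$, and for $\mathrm{DtN}_k$ use a Calder\'on identity to write it as $2\W_k-4\K_k^\top\W_k+\mathrm{OPS}(-5)$ (the paper's primary derivation goes through $\V_k^{-1}(-\tfrac12\I+\K_k)$ and Calder\'on to reach the equivalent $2\W_k-4\W_k\K_k$, and explicitly lists your identity $(\tfrac12\I+\K_k^\top)^{-1}\W_k$ as the alternative). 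Your closing caveat about needing one more term of $\W_k$ to pin down the $\mathrm{OPS}(-4)$ remainder in~\eqref{eq:06:th:psido:exp} is well taken; the paper's proof simply declares this step ``straightforward'' without supplying that extra bookkeeping.
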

\begin{proof}
 Expansions \eqref{eq:01:th:psido:exp}-\eqref{eq:03:th:psido:exp} follows from Theorem \ref{prop:th:psido:exp} whereas  Properties \eqref{eq:04:th:psido:exp}-\eqref{eq:05:th:psido:exp} follows from
 \[
  (n^2-k^2)^{-1/2} = \frac{\sign(n) i }{i n} -\frac{\sign(n) i }{(in)^3}   +{\cal O}(n^{-5}),\quad
  -(n^2-k^2)^{1/2} = (\sign(n) i)(i n) +\frac{\sign(n) i }{ in }   +{\cal O}(n^{-3}).
 \]
Finally, if $\V_k$ is invertible, using \eqref{eq:calderonIdentitiesv2}
\begin{eqnarray*}
 \mathrm{DtN}_k &=&\V_k^{-1} (-\frac{1}{2}\I+\K_k) = -4 \W_k(-\frac12\I+\K_k) +\mathrm{OPS}(-5) \\
 &=&  2\W_k -4\W_k\K_k+\mathrm{OPS}(-5).
\end{eqnarray*}
Now \eqref{eq:06:th:psido:exp} is straightforward to derive. If $\V_k$ fails to be invertible we can use the alternative
expression for the Dirichlet-to-Neumann operator
\[
  \mathrm{DtN}_k = \left(\frac{1}{2}\I+\K_k^\top\right)^{-1}\W_k
\]
(notice that at least one of  $\V_k$ or $(\frac{1}{2}\I+\K_k^\top)$ must be invertible) and proceed similarly.
\end{proof}

\begin{lemma}\label{lemma:ntDnt}
 It holds
 \begin{subequations}
\begin{eqnarray}
 \nnn\cdot \W_k(\nnn\cdot)  =  \ttt\cdot \W_k(\ttt\cdot)   &=& -\frac{1}2 \Lambda^{-1} +\frac{k^2}4\Lambda +\mathrm{OPS}(-3), \\
 \ttt\cdot \W_k(\nnn\cdot)  = -\nnn\cdot \W_k(\ttt\cdot)   &=&  \frac{1}2\kappa\D\Lambda+\frac{k^2}4\kappa \D\Lambda^3+\mathrm{OPS}(-3),\\
 \nnn\cdot \V_k(\nnn\cdot)=\ttt\cdot \V_k(\ttt\cdot)       &=& \frac{1}2  \Lambda+\frac14(2\kappa+k^2)   \Lambda^3+\mathrm{OPS}(-4),\ \\
 \ttt\cdot \V_k(\nnn\cdot)  =  \nnn\cdot \V_k(\ttt\cdot)   &=& -\frac{1}2 \kappa   \D\Lambda^3+\mathrm{OPS}(-4), \  \\
 \nnn\cdot \D\V_k(\nnn\cdot)=\ttt\cdot \D\V_k(\ttt\cdot)   &=& \frac{1}2\D \Lambda   +\frac{k^2}4  \D\Lambda^3+\mathrm{OPS}(-4), \ \\
 \ttt\cdot \D\V_k(\nnn\cdot)=-\nnn\cdot \D\V_k(\ttt\cdot)  &=& -\frac{k^2}2\kappa  \Lambda ^3 +\mathrm{OPS}(-4).\\
\end{eqnarray}
\end{subequations}
\end{lemma}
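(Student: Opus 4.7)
The approach is a uniform application of Proposition~\ref{prop:th:psido:exp}, the commutator rules \eqref{eq:H3}--\eqref{eq:H4}, and the Leibniz formula \eqref{eq:Leibnitz:rule}. For any scalar periodic pseudodifferential operator $\mathcal{O}$ on $\Gamma$ one writes
\[
\ttt\cdot\mathcal{O}(\nnn\varphi) = \sum_i t_i\,\mathcal{O}(n_i\varphi) = \Big(\sum_i t_i n_i\Big)\mathcal{O}\varphi + \sum_i t_i\,[\mathcal{O},n_i]\varphi,
\]
and analogously for the five remaining pairings, reducing each identity to a scalar principal contribution times $\mathcal{O}\varphi$ plus a commutator sum. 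The only geometric inputs are then
\[
\sum_i n_i^2 = \sum_i t_i^2 = 1,\qquad \sum_i n_it_i = 0,\qquad \sum_i n_in_i' = \sum_i t_it_i' = 0,\qquad \sum_i t_in_i' = -\sum_i n_it_i' = -\kappa,
\]
the last using $\D\nnn = -\kappa\,\ttt$ (equivalently \eqref{eq:ttnn:appendix}), together with the once-differentiated identity $\sum_i n_in_i'' = -|\D\nnn|^2 = -\kappa^2$.

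The commutator ingredients are, for smooth $a:\Gamma\to\mathbb{C}$,
\[
[\Lambda^{-1},a] = -a'\HH+\mathrm{OPS}(-\infty),\qquad [\Lambda,a] = -a'\HH\D_{-2}+\mathrm{OPS}(-3),
\]
together with an analogous two-term expansion of $[\Lambda,a]$ modulo $\mathrm{OPS}(-4)$ needed for the $\V_k$ statements, and $[\Lambda^3,a]\in\mathrm{OPS}(-4)$. The first identity is immediate from $\Lambda^{-1} = -\D\HH+\mathrm{J}$ and $[\D,a]=a'$; the second follows from $\Lambda = \HH\D_{-1}+\tfrac{L}{2\pi}\mathrm{J}$ and one iterate of \eqref{eq:Leibnitz:rule} applied to $\D_{-1}$, with a further iterate producing the $a''$ correction. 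Substituting the expansions $\W_k = -\tfrac12\Lambda^{-1}+\tfrac{k^2}{4}\Lambda+\mathrm{OPS}(-3)$ and $\V_k = \tfrac12\Lambda+\tfrac{k^2}{4}\Lambda^3+\mathrm{OPS}(-5)$ from Proposition~\ref{prop:th:psido:exp}, the two diagonal $\W_k$ identities reduce directly to $\W_k\varphi+\mathrm{OPS}(-3)$ via the cancellation $\sum_i n_in_i' = 0$; the off-diagonal $\ttt\cdot\W_k(\nnn\cdot)$ has zero principal part (as $\sum_i t_in_i = 0$) and its commutator correction is governed entirely by $\sum_i t_in_i' = -\kappa$, which yields the $\kappa$-weighted right-hand side after rewriting $\HH = \D\Lambda$ and $\HH\D_{-2} = -\D\Lambda^3$ modulo smoothing.

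The three $\V_k$ statements follow by the identical mechanism, but require the second iterate of Leibniz on $\D_{-1}$ since the target residual is $\mathrm{OPS}(-4)$; on the diagonal the coefficient of $\Lambda^3$ picks up the extra contribution $\tfrac12\bigl(\sum_i n_in_i''\bigr)\HH\D_{-3}$, which via $\Lambda^3 = -\HH\D_{-3}+\mathrm{OPS}(-\infty)$ produces the curvature term in the stated right-hand side. Finally, the two $\D\V_k$ identities follow by precomposing the $\V_k$ ones with $\D$ and using $\D a = a\D + a'$ together with the clean relations $\D\Lambda = \HH$ and $\D\Lambda^3 = -\HH\D_{-2}$ modulo smoothing. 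The principal obstacle throughout is not any individual manipulation but the bookkeeping of residuals: checking that the secondary commutators produced by the two Leibniz iterates are indeed absorbed by $\mathrm{OPS}(-3)$ or $\mathrm{OPS}(-4)$ as appropriate, and matching the curvature coefficients produced by the geometric sums with those displayed in the statement.
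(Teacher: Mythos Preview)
Your approach is essentially the same as the paper's: the paper first tabulates the sandwich operators $\nnn\cdot\D_r(\nnn\,\cdot)$, $\ttt\cdot\D_r(\nnn\,\cdot)$, etc.\ for $r=2,1,-1,-2$ using the Frenet relations and the generalized Leibniz rule \eqref{eq:Leibnitz:rule}, and then substitutes the expansions of $\V_k,\W_k$ from Proposition~\ref{prop:th:psido:exp} together with the commutation property \eqref{eq:H3} for $\HH$; your commutator organization $\sum_i a_i[\mathcal O,b_i]$ is simply a repackaging of the same computation. One bookkeeping point worth flagging: your identity $\sum_i n_in_i''=-\kappa^2$ is correct, so the diagonal $\V_k$ computation actually produces $\tfrac14(2\kappa^2+k^2)\Lambda^3$ rather than the $\tfrac14(2\kappa+k^2)\Lambda^3$ printed in the statement --- this appears to be a typo in the paper (consistent with a matching sign/power slip in its $\D_{-1}$ and $\D_{-2}$ table entries), not an error in your argument.
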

\begin{proof} We start from the identities
  \begin{equation}
   \begin{aligned}
   {\nnn}\cdot\D_2 \nnn =    {\bm t}\cdot\D_2 {\bm t} \ &=\   \D_2- \kappa^2\I,\\
   {\nnn}\cdot\D_2 {\bm t} = - {\bm t}\cdot\D_2 \nnn  \ &=\   -2\kappa\D + \kappa'\, \I, \ \\
   {\nnn}\cdot\D \nnn = -   {\bm t}\cdot\D {\bm t}    \ &=\   \D, \ \\
   {\bm t}\cdot\D \nnn = -
   {\nnn}\cdot\D {\bm t} \ &=\   \kappa \I,
   \ \\
   {\nnn}\cdot\D_{-1} \nnn =
   {\bm t}\cdot\D_{-1} {\bm t} \ &=\   \D_{-1}-\kappa\D_{-3} +
   (1+\kappa)(\mathrm{D}\kappa)\D_{-4}+\mathrm{OPS}(-5),
   \ \\
   {\bm t}\cdot\D_{-1} \nnn = -
   {\nnn}\cdot\D_{-1} {\bm t} \ &=\   -\kappa \D_{-2}+\kappa'\, \D_{-3}+
    (\mathrm{D}^2\kappa-\kappa^2)\D_{-4}+\mathrm{OPS}(-5),
    \ \\
   {\nnn}\cdot\D_{-2} \nnn =
   {\bm t}\cdot\D_{-2} {\bm t} \ &=\   \D_{-2}+3\kappa^2\D_{-4} + \mathrm{OPS}(-5),
   \ \\
   {\bm t}\cdot\D_{-2} \nnn = -
   {\nnn}\cdot\D_{-2} {\bm t} \ &=\   -2\kappa \D_{-3}-3\kappa'\, \D_{-4}+ \mathrm{OPS}(-4)
   \end{aligned}
  \end{equation}
  which can be easily proven from \eqref{eq:ttnn:appendix}.  Hence the result follows from
  Proposition \ref{prop:th:psido:exp} and the commutation properties for $\HH$ cf. \eqref{eq:H3}, the Leibnitz rule for the derivatives and its extension cf. \eqref{eq:Leibnitz:rule} for the {\em negative order derivatives}  $\D_{-r}$.
\end{proof}

\begin{theorem}\label{theo:ntDnt:02}
 It holds
 \begin{subequations}\label{eq:psido}
\begin{eqnarray}
  \nnn\cdot \W_k(\nnn\cdot)  =  \ttt\cdot \W_k(\ttt\cdot)  &=& \W_k +\mathrm{OPS}(-3)\nonumber\\
  &=& \frac{1}2\D\HH +\frac{k^2}4\HH\D_{-1}+\mathrm{OPS}(-3) \label{eq:psido:appendix:01a}  \\
  \ttt\cdot \W_k(\nnn\cdot)  = -\nnn\cdot \W_k(\ttt\cdot)  &=&  \kappa \D \V_k +\mathrm{OPS}(-3)\nonumber\\
  &=& \frac{1}2\kappa \HH -\frac{k^2}4\kappa \D_{-2}\HH +\mathrm{OPS}(-4)\label{eq:psido:appendix:01b}\\
  \nnn\cdot \V_k(\nnn\cdot)  =  \ttt\cdot \V_k(\ttt\cdot)      &=&  \V_k+4\kappa \V_k^3+\mathrm{OPS}(-4) \nonumber\\
  &=&\frac12\HH\D_{-1}-\frac{1}4(k^2+2\kappa)\D_{-3}\HH + \mathrm{OPS}(-3) \label{eq:psido:appendix:01c} \\
  \ttt\cdot \V_k(\nnn\cdot)  = -\nnn\cdot \V_k(\ttt\cdot)  &=&  -4\kappa \V_k ^3+\mathrm{OPS}(-4) \nonumber\\
  &=& \frac{1}{2}\kappa \D_{-3}\HH+\mathrm{OPS}(-4) \ \label{eq:psido:appendix:01d} \\
  \nnn\cdot \D\V_k(\nnn\cdot)=  \ttt\cdot \D\V_k(\ttt\cdot)  &=&   \D\V_k +\mathrm{OPS}(-4) \nonumber\\
  &=&
   \frac{1}2  \HH -\frac{k^2}4  \D_{-2}\HH +\mathrm{OPS}(-4)\label{eq:psido:appendix:01e}\  \\
  \ttt\cdot \D\V_k(\nnn\cdot)=- \nnn\cdot \D\V_k(\ttt\cdot) &=&  -4 k^2  \kappa  \D\V_k^3 +\mathrm{OPS}(-4) \nonumber\\
  &=& \frac{1}{2}k^2\kappa \HH\D_{-2}+\mathrm{OPS}(-4)
  \label{eq:psido:appendix:01f}\\
  \nnn\cdot \K_k(\nnn\cdot)  =  \ttt\cdot \K_k(\ttt\cdot)     &=&    4 k^2  \kappa  \V_k^3 +\mathrm{OPS}(-4)
 \nonumber\\
  &=&
  - \frac{1}{2}k^2\kappa \HH\D_{-3}+\mathrm{OPS}(-4)
   \ \label{eq:psido:appendix:01g}\\
  \ttt\cdot \K_k(\nnn\cdot)  = -\nnn\cdot \K_k(\ttt\cdot) &=&  \mathrm{OPS}(-4) \label{eq:psido:appendix:01h}.
\end{eqnarray}
\end{subequations}
\end{theorem}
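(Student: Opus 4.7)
The plan is to derive Theorem~\ref{theo:ntDnt:02} as a direct algebraic consequence of Lemma~\ref{lemma:ntDnt}, combined with the expansions of $\V_k$, $\W_k$, $\K_k$ from Proposition~\ref{prop:th:psido:exp} (equivalently Theorem~\ref{th:psido:exp}). Every identity in the statement has two parts: the first expresses a contraction $\ttt$/$\nnn$--$A_k$--$\ttt$/$\nnn$ as one of the standard Helmholtz BIOs (possibly premultiplied by $\kappa$) plus a smoother remainder, and the second rewrites this combination in the canonical $(\HH,\D_r,\kappa)$ form. Neither step is hard in isolation, but both require careful tracking of pseudodifferential orders.

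For the first equality in each line I would take the right-hand side of the corresponding line of Lemma~\ref{lemma:ntDnt}, which is already a polynomial in $\Lambda, \Lambda^{-1}, \D$ with $\kappa$-coefficients, and match it against the expansion furnished by Proposition~\ref{prop:th:psido:exp}. For instance, the right-hand side of $\nnn\cdot \W_k(\nnn\,\cdot)$ in the lemma agrees verbatim with the expansion of $\W_k$, giving \eqref{eq:psido:appendix:01a}. The mixed entry $\ttt\cdot \W_k(\nnn\,\cdot) = \tfrac12 \kappa \D\Lambda +\tfrac{k^2}{4}\kappa\D\Lambda^3 + \mathrm{OPS}(-3)$ coincides with $\kappa\D\V_k$ after using the expansion of $\V_k$ from \eqref{eq:01:prop:th:psido:exp}, yielding~\eqref{eq:psido:appendix:01b}. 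The other lines are analogous: one identifies the leading $\kappa$-coefficient and reads off the operator identity, using the commutation identity $a\HH=\HH a+\mathrm{OPS}(-\infty)$ from \eqref{eq:H3} when needed to push $\kappa$ through.

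For the second equality (explicit $\HH,\D_r$ form) I would substitute $\Lambda = \HH\D_{-1}+\tfrac{L}{2\pi}\mathrm{J}$ and $\Lambda^{-1}=-\D\HH+\mathrm{J}$, recalling that $\mathrm{J}\in\mathrm{OPS}(-\infty)$, and then commute $\kappa$ past $\HH$ and $\D_{-r}$ using \eqref{eq:H3}–\eqref{eq:H4} and the Leibnitz-type expansion \eqref{eq:Leibnitz:rule}. The final pair \eqref{eq:psido:appendix:01g}–\eqref{eq:psido:appendix:01h} for $\K_k$ is not covered by Lemma~\ref{lemma:ntDnt}, so I would derive it from \eqref{eq:03:prop:th:psido:exp} directly: since $\nnn\cdot\nnn=\ttt\cdot\ttt=1$ and $\nnn\cdot\ttt=0$, the diagonal contractions reduce to $\K_k$ modulo remainders coming from commuting the trigonometric factors past the kernel, and the off-diagonal ones vanish to leading order, falling into $\mathrm{OPS}(-4)$.

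The main obstacle, as I anticipate it, is the order bookkeeping in the off-diagonal identities \eqref{eq:psido:appendix:01b}, \eqref{eq:psido:appendix:01d}, \eqref{eq:psido:appendix:01f}. In each, the leading operator sits at order $-2$ or $-3$ and the claimed remainder is only one unit smaller, so every application of the Leibnitz rule \eqref{eq:Leibnitz:rule} to push $\kappa$ across $\D_{-r}$ must be carried out far enough to expose the advertised symbol without losing order. The other routine-but-lengthy piece is verifying line by line that the coefficients of $\Lambda,\ \Lambda^3,\ \D\Lambda^3$ in Lemma~\ref{lemma:ntDnt} really match the advertised combinations of $\V_k$, $\W_k$, $\kappa\V_k^3$, $k^2\kappa\V_k^3$, etc., which is a mechanical calculation but the first place a sign or factor can go astray.
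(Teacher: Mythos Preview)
Your proposal is correct and follows essentially the same approach as the paper, whose proof consists of the single sentence ``It is consequence of Lemma~\ref{lemma:ntDnt} and Proposition~\ref{prop:th:psido:exp}.'' You have simply (and usefully) spelled out the mechanics of that derivation, including the observation that the $\K_k$ lines \eqref{eq:psido:appendix:01g}--\eqref{eq:psido:appendix:01h} are not literally in Lemma~\ref{lemma:ntDnt} and must be read off directly from \eqref{eq:03:prop:th:psido:exp} together with the same $\nnn\cdot\D_r\nnn$, $\ttt\cdot\D_r\nnn$ identities used in that lemma's proof.
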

\begin{proof}
 It is consequence of Lemma \ref{lemma:ntDnt} and Proposition \ref{prop:th:psido:exp}.
\end{proof}

\end{document}